\def\excess{\xi}
\newcommand{\conv}{\text{conv}}
\renewcommand{\baselinestretch}{1.25}
\newtheorem{theorem}{Theorem}
\newtheorem{lemma}[theorem]{Lemma}
\newtheorem{rmk}[theorem]{Remark}
\crefname{conjecture}{Conjecture}{Conjectures}
\crefname{question}{Question}{Questions}
\theoremstyle{definition}
\theoremstyle{remark}
\numberwithin{theorem}{section}
\begin{document}

\title{Polytopes with low excess degree}
\author{Guillermo Pineda-Villavicencio}
\author{Jie Wang}
\author{David Yost}
\address{School of Information Technology, Deakin University, Geelong, Victoria 3220, Australia}
\email{\texttt{guillermo.pineda@deakin.edu.au}}
\address{Centre for Informatics and Applied Optimisation, Federation University, Mt. Helen, Victoria 3350, Australia}
\email{\texttt{wangjiebulingbuling@foxmail.com}}
\email{\texttt{d.yost@federation.edu.au}}

%

\begin{abstract}
We study the existence and structure of $d$-polytopes for which the number $f_1$ of edges  is small compared to the number $f_0$ of vertices. Our results are more elegantly expressed in terms of the excess degree of the polytope, defined as $2f_1-df_0$. We show that the excess degree of a $d$-polytope cannot lie in the range $[d+3,2d-7]$, complementing the known result that values in the range $[1,d-3]$ are impossible. In particular, many pairs $(f_0,f_1)$ are not realised by any polytope. For $d$-polytopes with excess degree $d-2$, strong structural results are known; we establish comparable results for excess degrees $d$, $d+2$, and $2d-6$. Frequently, in polytopes with low excess degree, say at most $2d-6$, the nonsimple vertices all have the same degree and they form either a face or a missing face. We show that excess degree $d+1$ is possible only for $d=3,5$, or $7$, complementing the known result that an excess degree $d-1$ is possible only for $d=3$ or $5$. 
\end{abstract}

\maketitle

\section{Introduction: the Excess theorem}\label{intro}

The combinatorial classification of polytopes under reasonable restrictions is a worthwhile field of research \cite[p. 333]{Ewa96}. Throughout, we denote the dimension of the ambient space by $d$ and the number of vertices and edges of a polytope $P$ by $f_0(P)$ and $f_1(P)$, respectively, or simply by $f_0$ and $f_1$ if $P$ is clear from the context. Briefly, our intention is to exhibit strong structural results for  $d$-dimensional polytopes (henceforth abbreviated as $d$-polytopes) whose excess degree is low (up to $2d-6$). This may lead to a catalogue of such polytopes with few vertices, and the non-existence of polytopes with certain values of $(f_0,f_1)$. For background on polytopes, the reader is referred to \cite{Ewa96}, \cite{PinBook}, or \cite{Zie95}. We begin by motivating and defining the concept of excess degree. 

 The {\it degree} of any vertex is the number of edges incident to it;  this cannot be less than the dimension of the polytope. The {\it excess degree} $\xi$ of a vertex $u$ in a $d$-polytope is defined as $\xi(u)=\deg u-d$; thus a vertex is {\it simple} if its excess degree is zero. The {\it excess} degree of a $d$-polytope $P$, denoted $\xi(P)$, is then defined as  the sum of the excess degrees of its vertices, i.e. $\sum_{u\in {\rm Vert} P}\xi(u)$, where ${\rm Vert}P$ denotes the vertex set of $P$. Hence, a polytope is \textit{simple}, meaning every vertex is simple, if $\xi(P)=0$. A vertex is {\it nonsimple} in a $d$-polytope $P$ if its degree in $P$ is at least $d+1$. A polytope with at least one nonsimple vertex is called {\it nonsimple}. It is easy to see that
$$\xi(P)=2f_1(P)-df_0(P).\eqno{(1)}$$

A trivial but useful observation is the following.
\begin{rmk}\label{even}
$\xi(P)$ must be even if $d$ is even.
\end{rmk}

Considering the class of all $d$-polytopes with $f_0$ vertices, for fixed $d$ and $f_0$, the problem of minimising $\xi(P)$ is  equivalent to the problem of minimising $f_1$; it was this latter problem that led us to this line of research. Here, we focus on the existence and classification of $d$-polytopes with ``low" excess degree, up to $2d-6$. The structure of the nonsimple vertices in such polytopes is quite restricted. In general, they
all have the same degree, and
form the vertex set of a face (or a missing face). Additionally, knowledge of the excess degree implies restrictions on the value of $f_0$.

In slightly more detail, the following summarise our results. Note that the polytopes with excess degree 0 are simply the simple polytopes, which are well studied, so we will not elaborate on  them. 
Assertions (i), (ii), and (iii) are known \cite{PinUgoYosEXC} and are included here for completeness. 
\begin{enumerate}[(i)]
    \item {\bf The Excess theorem} \cite[Theorem 3.3]{PinUgoYosEXC}: The excess degree of any nonsimple $d$-polytope is at least $d-2$.
    \item If a $d$-polytope has excess degree $d-2$, then either there is a vertex with excess degree $d-2$, or there are $d-2$ vertices, each with excess degree 1, that form the vertex set of a simplex face \cite[Theorem 4.10]{PinUgoYosEXC}.
    \item If a $d$-polytope has excess degree $d-1$, then all the nonsimple vertices have the same degree, and either $d=3$ or $d=5$, with the latter case having them as the vertex set of a face \cite[Theorem 4.18]{PinUgoYosEXC}. However, there is almost no restriction on the value of $f_0$ .
    \item If a $d$-polytope has excess degree $d$ and $d\ge7$, then it is (in a sense to be made precise) a certain sum of two simple polytopes which intersect in a common simplex facet, resulting in $d$ nonsimple vertices, each with excess degree 1. Moreover, $f_0$ is either $d+2$, $2d+1$, or $\ge3d$ (\cref{{lem:excess-d-part-2}}, \cref{{thm:excess-d}}).
    \item If a $d$-polytope has excess degree $d+1$, then  either $d=3$, $d=5$, or $d=7$.  However, there is almost no restriction on the value of $f_0$ (\cref{deeplusone}).
    \item For $d\ge 9$, a $d$-polytope with excess degree $d+2$  has $d+2$ vertices; in particular, it is 2-neighbourly (Theorem~\ref{thm-d+2}).
    \item No $d$-polytope has excess degree in the range $[d+3,2d-7]$ (Theorem~\ref{thm-d+3to2d-7}).
    \item If a $d$-polytope has excess degree $2d-6$ and $d\ge9$, then the nonsimple vertices all have the same degree and form the vertex set of a face (\cref{excess2d-6}).
\end{enumerate}

 For polytopes with excess degree $d-2$, we can also show that $f_0$ is either $d+2$, $2d-1$, $2d+1$, or $\ge3d-2$. However the proof is long, and will appear elsewhere.

A typical example of excess degree $d$ is obtained by stacking a vertex on a simplex facet of a simple polytope. In (iv), we assert that all examples are obtained similarly for $d\ge7$. We will see that this is almost true when $d=5$. For $d=3, 4$, or 6, the situation is more complex.

For (v), easy examples exist in dimensions 3, 5, and 7.

It is easy to check that a 2-neighbourly $d$-polytope with $d+2$ vertices has excess degree $d+2$. Thus, (vi) asserts that there are no other examples for $d\ge9$. This is the strongest case of a restriction on the value of $f_0$. In lower dimensions, easy examples exist with arbitrarily large values of  $f_0$.

Part (vii) provides new information for $d\ge11$. For example,  if  $d=12$, then no 12-polytope satisfies $f_1=6f_0+8$. 

In (viii), the nonsimple vertices can form a face of dimension 1, $d-4$, or $d-3$.

The proofs of these results are given from \S4 onwards. In \S2, we offer a new proof of the Excess theorem and additional background information. In \S3, we study the class of semisimple polytopes introduced in \cite{PinUgoYosEXC} and establish strong results about their excess degrees.

For excess degrees beyond $2d-6$, the situation becomes less clear. Consider $M(2,d-2)$, a $(d-2)$-fold pyramid over a square;
it has four simplex facets. If we glue a simplex over one these simplex facets, we obtain a polytope $P$ with excess degree  $2d-2$: there are $d-2$ vertices with excess degree 2, two vertices with excess degree 1, and two simple vertices. The nonsimple simple vertices form a simplex that is interior to $P$; specifically  they do not form a face. Later, we present examples where the subgraph of nonsimple vertices is not even connected.

\section{A new proof of the Excess theorem}
\label{sec:new-excess-theorem}
We give a new proof of the Excess theorem \cite[Thm. 3.3]{PinUgoYosEXC}, via  the solution of Gr\"unbaum's lower bound conjecture for 1-dimensional faces.

Gr\"unbaum \cite[Sec. 10.2]{Gru03} posed a conjecture about the minimal number of $m$-faces of $d$-polytopes with $f_0 \le 2d$, which was recently proved by Xue \cite{Xue20}. The first and third author with Ugon \cite{PinUgoYosLBT} had previously established the cases $m=1$ and $m\ge0.62d$. For some extensions of these results to polytopes with more than $2d$ vertices, see \cite{PinUgoYos2d+2,PinYos22, Xue23}.
In this paper, we are only concerned with the case $m=1$. 

First we introduce the minimising polytopes of Gr\"unbaum's {conjecture}. Denote by $\Delta(m,n)$, where $m,n >0$, the Minkowski sum of an $m$-dimensional simplex and an $n$-dimensional simplex lying in complementary subspaces, or any polytope combinatorially equivalent to it. Note that $\Delta(1,d-1)$ is just a prism based on a $(d-1)$-simplex; we will often refer to these simply as simplicial prisms, or even just as prisms. Denote by $M(k,d-k)$ the $(d-k)$-fold pyramid over the simplicial $k$-prism. 

\begin{theorem}\label{thm:triplexes}\cite[Theorem 7]{PinUgoYosLBT} Let $P$ be a $d$-polytope with $d+k$ vertices, where $1\le k\le d$. Then $P$ has excess degree at least $(k-1)(d-k)$, with equality if and only if $P$ is  $M(k,d-k)$. In particular $P$ has at least $d-k$ nonsimple vertices.
\end{theorem}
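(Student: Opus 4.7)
The plan is to translate the excess bound into edge-counting language. Via the identity $\xi(P)=2f_1(P)-df_0(P)$, for fixed $f_0(P)=d+k$ the claim $\xi(P)\ge(k-1)(d-k)$ is equivalent to a lower bound on $f_1(P)$, namely the case $m=1$ of Gr\"unbaum's lower bound conjecture. The ``in particular'' clause is then immediate once the main bound is known: every vertex $v$ satisfies $\deg v\le f_0-1=d+k-1$, so $\xi(v)\le k-1$. If $s$ denotes the number of nonsimple vertices of $P$, then $(k-1)(d-k)\le\xi(P)=\sum_v\xi(v)\le s(k-1)$, giving $s\ge d-k$ when $k\ge 2$; the case $k=1$ is the $d$-simplex and trivial.

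For the main inequality I would proceed by induction on $k$. The base case $k=1$ is the $d$-simplex, which is $M(1,d-1)$ and has excess $0=(k-1)(d-k)$. For the inductive step, two natural attacks suggest themselves: (a) inspect the vertex figure $P/v$ at a carefully chosen vertex $v$; this is a $(d-1)$-polytope with $\deg v$ vertices, to which the inductive hypothesis applies, and a double count of vertex--edge incidences lifts local information back to $\xi(P)$; or (b) use Gale duality, since $P$ corresponds to a point configuration in $\mathbb{R}^{k-1}$, a low-dimensional setting when $k$ is small compared to $d$, allowing one to enumerate the combinatorial types attaining few edges.

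For the equality case, the per-vertex bound $\xi(v)\le k-1$ combined with $\xi(P)=(k-1)(d-k)$ forces exactly $d-k$ nonsimple vertices, each of maximum excess $k-1$, that is, each adjacent to every other vertex of $P$. These $d-k$ universal vertices are mutually adjacent, and one next verifies that the remaining $2k$ vertices lie in a common hyperplane missed by the universal ones, so $P$ decomposes as a $(d-k)$-fold pyramid over a $k$-polytope $P'$ with $2k$ simple vertices. Applying the same argument (or direct classification in low dimension) to $P'$ identifies it with the simplicial $k$-prism $\Delta(1,k-1)$, yielding $P=M(k,d-k)$.

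The main obstacle is the main inequality itself: it is genuinely global rather than local in character, and its proof in \cite{PinUgoYosLBT} draws on nontrivial combinatorial machinery. Reestablishing it from scratch via either vertex-figure induction or Gale-diagram enumeration demands careful bookkeeping to exclude every non-$M(k,d-k)$ configuration, particularly in the borderline regime where the vertex figures no longer carry enough information to drive the induction cleanly. The pyramidal reduction in the equality case is also delicate, since universality at the graph level does not a priori yield a pyramid structure without additional geometric input.
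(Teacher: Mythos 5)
This theorem is not proved in the paper at all: it is imported verbatim from \cite[Theorem 7]{PinUgoYosLBT}, so there is no internal argument to compare yours against. Judged on its own terms, your proposal does not constitute a proof. The entire content of the statement is the inequality $\xi(P)\ge(k-1)(d-k)$ together with the characterisation of equality, and your text reduces this, via $\xi(P)=2f_1-df_0$, to ``the case $m=1$ of Gr\"unbaum's lower bound conjecture'' --- which is precisely the cited theorem --- and then only names two candidate strategies (vertex-figure induction, Gale duality) without executing either; you acknowledge as much in your closing paragraph. The one step you do carry out correctly is the deduction of the ``in particular'' clause: since $\deg v\le f_0-1=d+k-1$, every vertex has $\xi(v)\le k-1$, so $s$ nonsimple vertices give $\xi(P)\le s(k-1)$ and hence $s\ge d-k$ once $k\ge2$. (For $k=1$ the clause must be read as vacuous, since a simplex has no nonsimple vertices.)

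There is also a concrete error in your equality-case sketch. From $\xi(P)=(k-1)(d-k)$ and the per-vertex bound $\xi(v)\le k-1$ you conclude that there are \emph{exactly} $d-k$ nonsimple vertices, each attaining the maximal excess $k-1$. That does not follow from the arithmetic: the same total can a priori be spread over more vertices of smaller individual excess (for instance $2(d-k)$ vertices of excess $(k-1)/2$ when $k-1$ is even), and nothing in your argument rules this out. Everything downstream --- that the extremal vertices are universal, that the remaining $2k$ vertices lie in a common hyperplane, and that the resulting $k$-dimensional base is the simplicial prism $\Delta(1,k-1)$ --- rests on this unjustified claim, and the pyramidal decomposition itself requires genuine geometric input beyond adjacency in the graph, as you note. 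In short: the translation to edge-counting and the count of nonsimple vertices are fine, but the theorem itself remains unproved; if you intend to use it, cite \cite{PinUgoYosLBT} as the paper does.
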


Let $H$ be a hyperplane intersecting the interior of $P$ and not containing any vertex of $P$, and denote by $H^+$ and $H^-$ the corresponding closed half-spaces. Then $P^+=H^+ \cap P$ and $P^-=H^- \cap P$ is said to be obtained by \emph{truncation} of $P$. In the case that $H^+$ contains only one vertex $v$ of $P$, the facet $H\cap P$ of $P^-$ (or $P^+$) is called the \emph{vertex figure} of $v$, and is denoted $P/v$.

The next observation will be useful later.

\begin{lemma}\label{simpletruncation}
    Let $H$ be a hyperplane intersecting the interior of $P$ and not containing any vertex of a $d$-polytope $P$. Then the vertices of the facet $H\cap P$ (of $P^-$ and $P^+$) are the intersections with $H$ of the edges of $P$ (which meet $H$). In case either endpoint of such an edge is simple, then the corresponding intersection point is a simplex vertex of the $(d-1)$-polytope $H\cap P$.
\end{lemma}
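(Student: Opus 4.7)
The plan is to handle the two assertions separately. For the first one, I would apply the standard fact that the vertices of a hyperplane cross-section correspond to edges crossing the hyperplane: a point $w \in H \cap P$ is a vertex of $H \cap P$ iff the minimal face $F$ of $P$ containing $w$ satisfies $F \cap H = \{w\}$. Because $H$ contains no vertex of $P$, we have $\dim F \geq 1$; and if $\dim F \geq 2$ then $F \cap H$ would be at least one-dimensional, contradicting $w$ being extreme in $H \cap P$. So $F$ is an edge of $P$ crossing $H$, and conversely each such edge produces exactly one vertex of $H \cap P$.

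For the second assertion, let $e = [u,v]$ be an edge of $P$ crossing $H$ with $u$ simple, and set $w = e \cap H$. I would invoke the analogous face-intersection correspondence one dimension up: the edges of $H \cap P$ at $w$ are in bijection with the $2$-faces of $P$ that contain $e$ and cross $H$. The key observation is that every $2$-face of $P$ containing $e$ automatically crosses $H$, since such a face already contains the endpoints $u$ and $v$ on opposite sides of $H$. Hence the degree of $w$ in $H \cap P$ equals the total number of $2$-faces of $P$ containing $e$.

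To count those $2$-faces I would pass to the vertex figure $P/u$, which is a $(d-1)$-simplex because $u$ is simple. Under the standard correspondence, edges of $P$ at $u$ are the vertices of $P/u$ and $2$-faces of $P$ at $u$ are the edges of $P/u$. So the $2$-faces of $P$ at $u$ containing $e$ correspond to the edges of $P/u$ incident to the vertex $\tilde e$ associated to $e$. In a $(d-1)$-simplex every vertex has degree $d-1$, yielding exactly $d-1$ such $2$-faces. Since any $2$-face of $P$ containing $e$ is automatically incident to $u$, the total count is $d-1$. Therefore $w$ has degree $d-1$ in the $(d-1)$-polytope $H \cap P$, so it is simple, and its vertex figure is a $(d-2)$-simplex, which is the ``simplex vertex'' conclusion.

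The argument is largely bookkeeping around the face-intersection correspondence; the only step needing any care is the invocation of the vertex figure at $u$ together with the observation that no condition beyond ``$e$ crosses $H$'' is needed to ensure that every $2$-face through $e$ also crosses $H$. I do not anticipate a genuine obstacle.
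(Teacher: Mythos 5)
Your proof is correct. The paper gives no proof of this lemma at all -- it is stated as an ``observation'' and left as routine -- so there is nothing to compare against; your argument is exactly the standard verification one would supply: the face-lattice correspondence $F \mapsto F\cap H$ (valid because $H$ avoids all vertices, so any face meeting $H$ does so in its relative interior) identifies vertices of $H\cap P$ with transversal edges and edges of $H\cap P$ through $w=e\cap H$ with $2$-faces of $P$ containing $e$, and the count of $d-1$ such $2$-faces via the simplex vertex figure $P/u$ gives simplicity of $w$ in the $(d-1)$-polytope $H\cap P$. The one point worth stating explicitly if you write this up is the injectivity of $G\mapsto G\cap H$ on $2$-faces through $e$ (immediate, since $G$ is recovered as the minimal face of $P$ containing $G\cap H$), but that is bookkeeping, not a gap.
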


Part (i) of the following formulation of the Excess theorem will be used several times later.

\begin{theorem}\label{thm:excess} 
Let $P$ be a nonsimple $d$-polytope.
\begin{enumerate}[(i)]
    \item Let $k$ be the excess degree of a nonsimple vertex $v$ in $P$. Then $v$ has at least $d-k-2$ nonsimple neighbours.
    \item The sum of the excess degrees of a given nonsimple vertex and all of its neighbours is at least $ d-2$. 
    \item $\xi(P)\ge d-2$. 
\end{enumerate}
\end{theorem}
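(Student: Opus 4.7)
The plan is to derive all three parts from Theorem~\ref{thm:triplexes} applied to the vertex figure $P/v$ of a nonsimple vertex. Fix such a $v$ with excess degree $k$, so $\deg v = d+k$. Then $P/v$ is a $(d-1)$-polytope whose vertices are in bijection with the edges of $P$ at $v$; writing $d+k = (d-1)+(k+1)$ and applying Theorem~\ref{thm:triplexes} in dimension $d-1$ with parameter $k+1$ (which is in the allowed range whenever $k \le d-2$; if $k\ge d-1$ then (i) is vacuous and (ii) follows from $v$ alone) gives at least $(d-1)-(k+1)=d-k-2$ nonsimple vertices in $P/v$.

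The bridge to part (i) is the following symmetric identification. An edge $vw'$ of $P$ corresponds to a vertex $w$ of $P/v$, whose degree in $P/v$ counts the $2$-faces of $P$ through $vw'$. That same count equals the degree, in the vertex figure $P/w'$, of the vertex corresponding to the edge $vw'$. If $w'$ were simple in $P$, then $P/w'$ would be a $(d-1)$-simplex and this degree would be exactly $d-1$, forcing $w$ to be simple in $P/v$. Contrapositively, every nonsimple vertex of $P/v$ comes from a nonsimple neighbour of $v$ in $P$; combined with the previous paragraph, this proves (i). (An equivalent way to set up this correspondence is via Lemma~\ref{simpletruncation}: truncate $P$ by a hyperplane isolating $v$, so that the $(d-1)$-polytope $H\cap P$ plays the role of $P/v$, and note that a simple endpoint of an incident edge produces a simplex vertex of $H \cap P$.)

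For (ii), the excess contributions of $v$ and its at least $d-k-2$ nonsimple neighbours are at least $k$ and $1$ each, summing to at least $k+(d-k-2)=d-2$. Part (iii) is then immediate: picking any nonsimple vertex $v$ of $P$, the sum of excess degrees over $\{v\}$ together with its neighbours is at most $\xi(P)$, so $\xi(P)\ge d-2$.

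The main obstacle is the bridge step in (i), namely showing that the nonsimple vertices delivered by Theorem~\ref{thm:triplexes} in $P/v$ genuinely correspond to nonsimple vertices of $P$. The key observation is that the number of $2$-faces through a fixed edge is a global invariant computable at either endpoint, and that a simple endpoint pins this count to its minimum $d-1$; once this is in place, the rest of the proof is a one-line arithmetic bookkeeping.
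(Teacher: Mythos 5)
Your proposal is correct and follows essentially the same route as the paper: apply \cref{thm:triplexes} to the vertex figure $P/v$, viewed as a $(d-1)$-polytope with $(d-1)+(k+1)$ vertices, and observe that simple neighbours of $v$ yield simple vertices of $P/v$, so the guaranteed nonsimple vertices of $P/v$ come from nonsimple neighbours; parts (ii) and (iii) then follow by the same arithmetic. The only difference is that you spell out the justification of the bridge step (via counting $2$-faces through an edge), which the paper states without proof.
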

\begin{proof}
(i) The conclusion is trivial if $k>d-2$. Assume that $k \le d-3$. The vertex figure $P/v$ of $v$ is a $(d-1)$-polytope with $d+k=d-1+k+1$ vertices, and \cref{thm:triplexes} ensures that  $P/v$ has at least $(d-1)-(k+1)$ nonsimple vertices. Since every simple neighbour of $v$ in $P$ corresponds to a simple vertex in the vertex figure, there are at least $d-k-2$ nonsimple neighbours of $v$.

(ii) Choose a nonsimple vertex $v$ in $P$, and denote by $k$ its excess degree. Again, the conclusion is trivial if $k\ge d-2$. If $k<d-2$, then by (i) the sums of these excess degrees (of $v$ and its neighbours)  is at least $k+(d-k-2)$.

(iii) follows from (ii).
\end{proof}


\section{Semisimple Polytopes}\label{semisimple}

The main purpose of this section is to present a strong version of the Excess theorem for semisimple polytopes, which will be useful in later sections. Recall \cite[p. 2013]{PinUgoYosEXC} that a polytope is called {\it semisimple} if any two distinct facets are either disjoint or intersect in a ridge (i.e. a $(d-2)$-face of a $d$-polytope). The motivation for this definition is the following result, which implies that if a $d$-polytope has two facets whose intersection has dimension in the range $[1,d-3]$, then $\xi(P) \ge d-2$. In other words, the Excess theorem for polytopes which are not semisimple is an easy consequence of \cref{basic}.

\begin{lemma}\label{basic}\cite[Lemma 2.6]{PinUgoYosEXC}
Let $F_1$ and $F_2$ be any two distinct nondisjoint facets of a $d$-polytope
$P$ and denote by $j$ the dimension of $F_1\cap F_2$. Then
\begin{enumerate}[(i)]
    \item  Every vertex in $F_1\cap F_2$ has excess degree at least $d-2-j$.
    \item  The total excess degree of $P$ is at least $max\{ \xi (F_1), \xi (F_2), \xi (F_1\cap F_2)\}+(d-2-j)(j + 1)$.
    \item   If $j\ne d-2$, i.e. $F_1\cap F_2$ is not a ridge of the polytope, then $P$ has excess degree at least $d-2$; in particular, $P$ is not simple.
    \item   If $1\le j\le d-3$, then $P$ has excess degree at least $2d-6$. 
\end{enumerate}
\end{lemma}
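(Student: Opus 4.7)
The plan is to establish (i) by a vertex-figure calculation, upgrade it to (ii) by a short inclusion--exclusion on neighbours together with summation, and obtain (iii) as an immediate algebraic corollary of (ii). Part (iv) requires almost the same algebraic check, except for a single edge case which needs an additional geometric input; this edge case is where the main difficulty lies.

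For (i), I would work inside the vertex figure $P/v$ of a vertex $v\in F_1\cap F_2$. This is a $(d-1)$-polytope whose vertices correspond bijectively to the neighbours of $v$ in $P$, so $\deg_P(v)=|V(P/v)|$. The facets $F_1,F_2$ through $v$ induce facets $F_1/v$ and $F_2/v$ of $P/v$, each a $(d-2)$-polytope, and their intersection is the $(j-1)$-face $(F_1\cap F_2)/v$. The only nontrivial ingredient is the elementary affine-dimension fact that any proper $k$-face $G$ of a $d$-polytope is missed by at least $d-k$ of its vertices (since $\aff G$ has dimension $k$ but the polytope has dimension $d$). Applying this inside $F_1/v$ gives at least $(d-2)-(j-1)=d-1-j$ vertices of $F_1/v$ outside $(F_1\cap F_2)/v$, and similarly in $F_2/v$. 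These two ``outer'' sets are disjoint from each other and from $V((F_1\cap F_2)/v)$ (since common neighbours of $v$ must live in the common face), so $\deg_P(v)\ge 2(d-1-j)+j=2d-2-j$, which is exactly (i).

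For (ii), I would combine the inclusion--exclusion $\deg_P(v)\ge \deg_{F_1}(v)+\deg_{F_2}(v)-\deg_{F_1\cap F_2}(v)$ with the same vertex-figure argument applied \emph{inside} $F_1$ and $F_2$, which gives $\xi_{F_i}(v)\ge \xi_{F_1\cap F_2}(v)$ for every $v\in F_1\cap F_2$, and also $\xi_P(v)\ge\xi_{F_1}(v)$ for every $v\in F_1$ (applied inside $P$ itself with the single facet $F_1$). A short rearrangement then shows that for $v\in F_1\cap F_2$,
\[
\xi_P(v)\ge (d-2-j)+\max\{\xi_{F_1}(v),\xi_{F_2}(v),\xi_{F_1\cap F_2}(v)\}.
\]
Summing the bound $\xi_P(v)\ge\xi_{F_1}(v)$ over $v\in F_1\setminus(F_1\cap F_2)$ and the above over $v\in F_1\cap F_2$ (using $|V(F_1\cap F_2)|\ge j+1$) gives $\xi(P)\ge \xi(F_1)+(d-2-j)(j+1)$; the symmetric estimate with $F_2$, and the restriction to $F_1\cap F_2$, supply the other two terms inside the maximum.

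Parts (iii) and (iv) then reduce to analysing the downward parabola $f(j)=(d-2-j)(j+1)$. On $[0,d-3]$ its minimum $d-2$ is attained at the endpoints, proving (iii); on $[1,d-4]$ one has $f(j)\ge 2d-6$ directly. The one real obstacle is the edge case $j=d-3$ in (iv), where (ii) alone only delivers $d-2$. To handle it, I would invoke the face figure $P/(F_1\cap F_2)$: this is a $2$-polytope with at least three vertices, so there exists a third facet $F_3\supseteq F_1\cap F_2$. Every $v\in F_1\cap F_2$ then lies in at least three facets, and a refinement of the count in the vertex figure $P/v$ using the three induced facets $F_1/v,F_2/v,F_3/v$ and the disjoint ``outer'' vertex sets attached to their pairwise intersections should upgrade (i) to $\xi_P(v)\ge 2$, giving $\xi(P)\ge 2(d-2)\ge 2d-6$. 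Verifying this upgrade cleanly across all sub-configurations—in particular, when some pair $F_i\cap F_3$ happens to be a full ridge of $P$ rather than equalling $F_1\cap F_2$—is the most delicate step, and may require bringing in a fourth facet supplied by the polygonal structure of $P/(F_1\cap F_2)$.
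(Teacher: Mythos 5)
Your treatment of (i)--(iii) is correct and is essentially the paper's own (sketched) argument: the paper counts, directly in $P$, at least $j$ neighbours of a vertex $v$ inside $F_1\cap F_2$ and at least $d-1-j$ further neighbours in each of $F_1\setminus(F_1\cap F_2)$ and $F_2\setminus(F_1\cap F_2)$, which is exactly the disjointness count you perform in the vertex figure; your inclusion--exclusion derivation of (ii) and the endpoint analysis of the parabola $(d-2-j)(j+1)$ fill in details the paper leaves as ``routine''.

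The genuine problem is the edge case $j=d-3$ of (iv), which you rightly isolate but then propose to close by upgrading (i) to ``every vertex of $F_1\cap F_2$ has excess degree at least $2$''. That upgrade is impossible, because the assertion of (iv) is false when $j=d-3$: take $P=M(2,d-2)$, the $(d-2)$-fold pyramid over a square, for $d\ge 5$. The two simplex facets sitting over opposite edges of the square meet exactly in the $(d-3)$-simplex spanned by the $d-2$ apexes; each apex is adjacent to all other $d+1$ vertices and so has excess degree exactly $1$, giving $\xi(P)=d-2<2d-6$. Note that the face figure of this subridge is a quadrilateral, so the third and fourth facets your argument would invoke do exist and still fail to force excess $2$ at any vertex. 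The resolution is that part (iv) as printed contains a typo: the hypothesis should be $1\le j\le d-4$ (as in \cite[Lemma~2.6]{PinUgoYosEXC}, and as this paper itself assumes when it later allows two facets of a polytope with excess degree below $2d-6$ to meet in a subridge). With that correction, your observation that $(d-2-j)(j+1)\ge 2(d-3)=2d-6$ on $[1,d-4]$, with equality at the endpoints, already completes (iv) with no extra geometric input.
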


\begin{proof}
(Sketch).   The degrees of a given  vertex $v$ in $F_1\cap F_2$ , $F_1 \setminus F_1\cap F_2$ ,
and  $F_2 \setminus F_1\cap F_2$  will be at least $j$, $d-1-j$, and  $d-1-j$, respectively. So the total degree of
such a vertex must be at least $2d-2-j$. 
(ii) There are at least $j + 1$ vertices in $F_1\cap F_2$.
(iii), (iv) follow routinely.
\end{proof}

 A polytope in which every pair of facets intersects in a ridge of the polytope will be called {\it super-Kirkman}. (Note that the definition of super-Kirkman is not quite the same as the definition in \cite[p. 13]{Kle74}.) Every super-Kirkman polytope is semisimple  and every simple polytope is semisimple, but neither implication is reversible. It is worth mentioning that in dimension $2, 3$ or $4$, every semisimple polytope is simple \cite[Lemma 2.7]{PinUgoYosEXC}.

\begin{rmk}\label{remark_sk}
  When $m,n >1$, a pyramid over $\Delta(m,n)$ is super-Kirkman but not simple; when $m$ or $n$ equals 1, $\Delta(m,n)$ is a simplicial prism which is simple but not super-Kirkman.   
\end{rmk}

There are several ways to build up a polytope from another polytope with lower dimension. Given a $d$-polytope $P$ and a proper face $F$, the \emph{wedge} of $P$ over $F$, denoted $W=W(P,F)$ is a $(d+1)$-polytope which is the convex hull of two affinely equivalent facets $P_1$ and $P_2$, both of which are combinatorially equivalent to $P$, and whose intersection $P_1\cap P_2=F$. Moreover the only edges outside $P_1\cup P_2$ are those joining corresponding vertices in  $P_1$ and $P_2$. See \cite[\S2.6]{PinBook} for more details of the construction of wedges, free joins and Cartesian products. 

The following establishes the stability of these properties under some basic constructions, and shows that a prism whose base is a pyramid over $\Delta(m,n), m,n>1$ is semisimple but neither super-Kirkman nor simple. 
 
\begin{lemma}\label{SKconstructions}
\begin{enumerate}[(i)]
    \item A prism whose base is semisimple will again be semisimple. 
     \item A simplex of dimension at least 2 is super-Kirkman.
   \item The free join of two super-Kirkman polytopes is again a super-Kirkman polytope. 
   \item A (multifold) pyramid over a super-Kirkman polytope is again super-Kirkman.
    \item A wedge based on a super-Kirkman polytope over one of its facets is a super-Kirkman polytope.
   \item The Cartesian product of two super-Kirkman polytopes is again a super-Kirkman polytope. In particular $\Delta(m,n)$ is super-Kirkman for $m,n\ge2$.
\end{enumerate}
\begin{proof}
Routine. Note that (iv) is not quite a special case of (iii) because a simplex of dimension 0 or 1 is not super-Kirkman. 
\end{proof}
\end{lemma}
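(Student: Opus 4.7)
The approach is uniform for all six parts: identify the facets of the constructed polytope explicitly and compute every pairwise intersection, verifying that each is either empty (as allowed for semisimplicity in (i)) or a ridge of the new polytope. For the prism in (i), the facets are $P\times\{0\}$, $P\times\{1\}$, and $F\times[0,1]$ for each facet $F$ of $P$; a pair of ``vertical'' facets meets in $(F_1\cap F_2)\times[0,1]$, which is empty or a ridge according as $F_1\cap F_2$ is empty or a ridge of $P$. For the simplex in (ii), the $d+1$ facets intersect pairwise in $(d-2)$-simplices whenever $d\ge 2$. For (iii), the facets of the free join $P*Q$ are of the form $F*Q$ or $P*G$, and the three types of pairwise intersection---$(F_1\cap F_2)*Q$, $P*(G_1\cap G_2)$, and $F*G$---all have dimension $\dim(P*Q)-2$ when $P$ and $Q$ are super-Kirkman. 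Part (iv) is analogous with $Q$ replaced by a point, and the multifold case follows by induction on the number of apex vertices; the authors' parenthetical reminder is that (iv) does not literally reduce to (iii) because $0$- and $1$-dimensional simplices are not super-Kirkman. Part (vi), the Cartesian product, is handled identically to (iii): the facets are $F\times Q$ and $P\times G$, with intersections $(F_1\cap F_2)\times Q$, $P\times(G_1\cap G_2)$, and $F\times G$, each of codimension $2$.

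Only (v) requires real care. I would realise the wedge $W=W(P,F)$ concretely as $\{(x,t):x\in P,\ 0\le t\le \ell(x)\}$, where $\ell$ is a nonnegative affine functional on $\mathrm{aff}(P)$ vanishing precisely on $F$. Its facets are the two copies of $P$ given by $t=0$ and $t=\ell(x)$, together with one facet $W_G=\{(x,t):x\in G,\ 0\le t\le \ell(x)\}$ for each facet $G\ne F$ of $P$. Most intersections are immediate: the two copies of $P$ meet in $F$, a ridge of $W$; each copy of $P$ meets $W_G$ in a copy of $G$, again a ridge; and the critical case $W_{G_1}\cap W_{G_2}=\{(x,t):x\in G_1\cap G_2,\ 0\le t\le \ell(x)\}$ has the desired codimension exactly when $\ell$ is not identically zero on $G_1\cap G_2$, equivalently $G_1\cap G_2\not\subseteq F$.

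The main obstacle is therefore ruling out $G_1\cap G_2\subseteq F$ in the super-Kirkman setting. The key fact is that every ridge of a polytope is contained in exactly two facets. If $G_1\cap G_2\subseteq F$, then $G_1\cap G_2$ (a ridge of $P$ by super-Kirkman) is contained in $G_1\cap F$ (also a ridge of $P$), and two ridges of equal dimension with one inside the other must coincide; hence $G_1\cap F=G_1\cap G_2$, and symmetrically $G_2\cap F=G_1\cap G_2$. But then the single ridge $G_1\cap G_2$ lies in three distinct facets $G_1,G_2,F$, a contradiction. This completes (v) and with it the lemma.
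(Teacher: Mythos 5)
Your proof is correct and is exactly the facet-by-facet verification that the authors summarise as ``routine'': in each construction you list the facets, compute all pairwise intersections, and check the codimension. The only step with genuine content is ruling out $G_1\cap G_2\subseteq F$ in the wedge case, and your argument there (a ridge of $P$ would lie in the three distinct facets $G_1$, $G_2$, $F$, contradicting the diamond property) is valid.
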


\begin{lemma}\label{vertexfig}
    In a semisimple polytope, every vertex figure is a super-Kirkman polytope.
\end{lemma}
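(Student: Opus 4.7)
The plan is to translate the super-Kirkman condition on $P/v$ into a statement about the facets of $P$ containing $v$, and then invoke semisimplicity directly. The key well-known fact I will use is that the faces of $P/v$ correspond bijectively to the faces of $P$ that properly contain $v$, with dimension shifted down by one: specifically, the facets of $P/v$ are in bijection with the facets of $P$ incident to $v$, and more generally a $k$-face of $P/v$ corresponds to a $(k+1)$-face of $P$ containing $v$.

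First I would let $G_1$ and $G_2$ be two distinct facets of the vertex figure $P/v$, and pull them back to two distinct facets $F_1, F_2$ of $P$, both containing $v$. Their intersection $G_1 \cap G_2$ in $P/v$ corresponds to the face $F_1 \cap F_2$ of $P$, which is nonempty because it contains $v$. Now I would invoke the hypothesis that $P$ is semisimple: since $F_1 \cap F_2$ is nonempty, it must in fact be a ridge of $P$, that is, a $(d-2)$-face.

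Consequently, $G_1 \cap G_2$, being the face of $P/v$ corresponding to the $(d-2)$-face $F_1 \cap F_2$ of $P$, is a $(d-3)$-face of the $(d-1)$-polytope $P/v$, i.e.\ a ridge of $P/v$. Since $G_1$ and $G_2$ were arbitrary distinct facets of $P/v$, every pair of facets of $P/v$ meets in a ridge, so $P/v$ is super-Kirkman by definition.

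There is no substantial obstacle here; the only thing to be careful about is the face-correspondence for vertex figures (in particular, that distinct facets of $P$ through $v$ yield distinct facets of $P/v$, and that the intersection commutes with the vertex-figure construction). Both are standard, but I would state them explicitly in the proof to keep it self-contained. Note also that the statement is vacuous in low dimensions where semisimple forces simple (as remarked after \cref{basic}), so the content lives in dimension at least five.
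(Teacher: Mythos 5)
Your proposal is correct and follows essentially the same route as the paper: both pull $G_1, G_2$ back to two facets of $P$ through $v$, note their intersection is nonempty (it contains $v$), apply semisimplicity to conclude it is a ridge, and push forward to get a ridge of $P/v$. The paper's version is just a terser rendering of the same argument.
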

\begin{proof}
    Let $v$ be a vertex in a semisimple $d$-polytope $P$.  $G_1$, $G_2$ be two facets in the vertex figure $P/v$. Since there is a one-one correspondence between faces in the vertex figure and faces of $P$ containing $v$, there are two nondisjoint facets $F_1$, $F_2$ of $P$ which correspond to $G_1$, $G_2$. Since $P$ is semisimple, $\dim(F_1\cap F_2)=d-2$. Hence $\dim(G_1\cap G_2)=d-3$.
\end{proof}

Before continuing, we need a complete classification of simple polytopes with a small number of vertices. One possible way to obtain these polytopes is by truncating a face from a simple polytope. Denote by $J(d)$ the simple polytope obtained by truncating one vertex of a simplicial $d$-prism \cite[p. 2017]{PinUgoYosEXC}; it has $3d-1$ vertices. Like prisms, $J(d)$ has some disjoint facets. 

\begin{rmk}\label{Jremark}
    $J(d)$ is simple but not super-Kirkman.
\end{rmk}

We recall the catalogue of simple $d$-polytope with up to $3d$ vertices \cite[Lemma 2.19]{PinUgoYosEXC}, \cite[Lemma 10]{PrzYos16}.

\begin{lemma}\label{lem:simple} Let $P$ be any simple $d$-polytope.
\begin{enumerate}[(i)]
    \item If $P$ has strictly less than $3d$ vertices, then it is either a simplex, a prism, $\Delta(2,d-2)$, $\Delta(3,3)$, $\Delta(3,4)$ or $J(d)$.
    \item Either $f_0\in\{d+1,2d,3d-3\}$ or $f_0\ge3d-1$.
    \item If a simple $d$-polytope has $3d$ vertices, then $d=2, 4$ or 8. If $d=8$, the polytope is $\Delta(3,5)$.
\end{enumerate} 
\end{lemma}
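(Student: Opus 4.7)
My plan is to obtain parts (ii) and (iii) as direct consequences of part (i). By tabulating vertex counts---$d+1$ for $\Delta^d$, $2d$ for $\Delta(1,d-1)$, $3d-3$ for $\Delta(2,d-2)$, $3d-1$ for $J(d)$, with $\Delta(3,3)$, $\Delta(3,4)$, $\Delta(3,5)$ contributing only in dimensions $6$, $7$, $8$ respectively---both (ii) and (iii) reduce to inspection of which values of $f_0$ are achieved in each range. So the substantive task is (i).

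For (i) I would use strong induction on $d$. The base cases $d\in\{2,3,4\}$ can be handled by classical enumeration, aided by the fact (recalled in the excerpt) that in those low dimensions every semisimple polytope is simple. For the inductive step, let $P$ be a simple $d$-polytope with $f_0<3d$, and fix an arbitrary facet $F$. Since facets of simple polytopes are simple, $F$ is a simple $(d-1)$-polytope; let $k=f_0(F)$ and $m=f_0-k$. Because each vertex of $F$ has $F$-degree $d-1$ and $P$-degree $d$, exactly one incident edge leaves $F$, producing precisely $k$ edges from $F$ to its complement.

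The analysis then splits on $m$. If $m=1$, the sole exterior vertex is joined to all $k$ vertices of $F$; simplicity forces $k=d$, hence $F=\Delta^{d-1}$ and $P=\Delta^d$. If $m\geq 2$, I invoke the inductive hypothesis on $F$ (noting that $k\le f_0-2<3d-2$, which covers $k<3(d-1)$ and leaves a small residue to analyse directly). I then examine the map $F\to P\setminus F$ sending each vertex to its unique exterior neighbour. If this map is injective its image forms a second facet $F'$ combinatorially isomorphic to $F$, making $P$ a prism-like assembly over $F$; unrolling the inductive options for $F$ produces $\Delta(1,d-1)$, $\Delta(2,d-2)$, or one of the sporadic cases $\Delta(3,3)$, $\Delta(3,4)$. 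If the map is not injective, some exterior vertex absorbs multiple incoming edges, and I use the tight global bound $m<3d-k$ together with simplicity at each exterior vertex to force $P=J(d)$.

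The hard part will be the non-injective subcase: ruling out every exceptional structure other than $J(d)$. The difficulty is combinatorial bookkeeping---combining local simplicity at each exterior vertex with the tight global vertex count---and ensuring that the inductive unpeeling does not spawn polytopes outside the stated list, in particular checking that the family $\Delta(3,k)$ contributes only for $k\le 5$ because otherwise $4(k-1)\ge 3d$ violates $f_0<3d$.
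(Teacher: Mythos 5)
First, a point of comparison that is somewhat moot: the paper offers no proof of this lemma at all. It is recalled verbatim from earlier work (\cite[Lemma 2.19]{PinUgoYosEXC}, \cite[Lemma 10]{PrzYos16}), so there is no internal argument to measure yours against. Judged on its own terms, your sketch of (i) has a genuine gap at its central dichotomy. In the injective branch, the assertion that the image of the exterior-neighbour map is a facet combinatorially isomorphic to $F$ is not a general fact about simple polytopes: in the dodecahedron the five exterior neighbours of the vertices of a pentagonal facet form an independent set, not a face. Any proof of your claim must therefore lean on the bound $f_0<3d$, and once you do that you find that injectivity forces $f_0\ge 2f_0(F)$, hence $f_0(F)\le(3d-1)/2$, hence (by induction) $F$ is a simplex for $d\ge4$. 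So the injective branch can only ever produce the simplex and the prism; it cannot ``unroll'' to $\Delta(2,d-2)$, $\Delta(3,3)$ or $\Delta(3,4)$ as you assert.

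Correspondingly, the non-injective branch cannot be made to ``force $P=J(d)$'': for the facet $\Delta_2\times G$ of $\Delta(2,d-2)$ (with $G$ a facet of $\Delta_{d-2}$) the exterior-neighbour map is $(d-2)$-to-one onto a triangle, for the facet $e\times\Delta_{d-2}$ it is two-to-one, and every facet of $\Delta(3,3)$, $\Delta(3,4)$ and every non-simplex facet of a prism behaves similarly. All of these polytopes land in your non-injective case, so that case must recover essentially the whole list, which is exactly the hard classification you were hoping to avoid. This is where the real work lies, and the sketch gives no mechanism for it. A smaller but concrete issue: your plan to obtain (ii) by tabulating the vertex counts from (i) actually exposes the fact that $\Delta(3,3)$ is a simple $6$-polytope with $16=3d-2$ vertices, a value lying in neither $\{d+1,2d,3d-3\}$ nor $[3d-1,\infty)$; so (ii) as stated needs a $d=6$ caveat and does not simply drop out of the table. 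Finally, note that the inductive hypothesis covers facets with fewer than $3(d-1)$ vertices, whereas a facet of $P$ may have up to $3(d-1)$ vertices; the ``small residue'' is exactly part (iii) one dimension down, so the induction must carry all three parts simultaneously.
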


Later we will need slightly more information about the number of vertices of a simple polytope.

\begin{lemma}\label{f-zero-simple}
   If $P$ is a simple $d$-polytope, then either $f_0\in\{d+1,2d,3d-3,3d-1\}$ or $f_0\ge4d-8$. 
\end{lemma}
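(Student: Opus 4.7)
The plan is to exploit the $h$-vector of $P$ together with the $g$-theorem. Writing $(h_0,\dots,h_d)$ for the $h$-vector of the simple $d$-polytope $P$, recall that $h_0 = h_d = 1$, $h_i = h_{d-i}$ (Dehn--Sommerville), $h_1 = f_{d-1}(P) - d$, and $f_0(P) = \sum_{i=0}^{d} h_i$. The $g$-theorem asserts that $g_i := h_i - h_{i-1}$ (for $1 \le i \le \lfloor d/2 \rfloor$) is an M-sequence: $g_i \ge 0$, $g_{i+1} \le g_i^{\langle i \rangle}$, and once $g_i = 0$ every subsequent entry vanishes. I would then split the argument on the value of $h_1$.

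If $h_1 = 1$, then $g_1 = 0$ forces $h \equiv 1$ and $f_0 = d + 1$, so $P$ is a simplex. If $h_1 = 2$, then $g_1 = 1$, and since $1^{\langle i \rangle} = 1$ for every $i$, Macaulay's condition pins each $g_i$ in $\{0,1\}$ with a zero propagating forward; the $g$-vector therefore has the form $(1,\dots,1,0,\dots,0)$ with some $a \in \{1,\dots,\lfloor d/2 \rfloor\}$ leading ones, yielding $h_i = \min(i+1,\, a+1,\, d-i+1)$ and, by direct summation, $f_0 = (a+1)(d-a+1)$. This equals $2d$ for $a = 1$, $3d-3$ for $a = 2$, and is at least $4d-8$ for $a \ge 3$ (with equality at $a = 3$, realised by $\Delta(3, d-3)$).

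If $h_1 = 3$, I would further split on $g_2$: if $g_2 = 0$ then propagation flattens the $h$-vector to $h_i = 3$ for $1 \le i \le d-1$ and $f_0 = 3d-1$ (achieved by $J(d)$); if $g_2 \ge 1$, then unimodality and symmetry ensure $h_i \ge 4$ for $2 \le i \le d-2$, whence $f_0 \ge 2 + 6 + 4(d-3) = 4d - 4$. Finally, if $h_1 \ge 4$, the same extension gives $h_i \ge 4$ for all $1 \le i \le d-1$ and $f_0 \ge 4d - 2$. Combining the four cases, $f_0$ lies in $\{d+1, 2d, 3d-3, 3d-1\}$ or is at least $4d - 8$.

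The main obstacle is extracting the precise shape of the $g$-vector from Macaulay's bound in the $h_1 = 2$ case, together with the analogous rigidity ``$g_2 = 0 \Rightarrow h_i \equiv 3$'' in the $h_1 = 3$ case. Both hinge on the identity $1^{\langle i \rangle} = 1$ and the no-resurrection property of M-sequences; once these are in hand the rest is routine arithmetic, and the dichotomy $\{3d-3, 3d-1\}$ versus $\ge 4d - 8$ emerges as the critical jump at $a = 3$ (in case $h_1 = 2$) and at the first step away from a flat $h$-vector (in case $h_1 = 3$).
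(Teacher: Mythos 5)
Your argument is correct and is essentially the paper's proof: both rest on the $g$-theorem, and your identity $f_0=\sum_i h_i$ with $h_i=\sum_{k\le\min(i,d-i)}g_k$ is exactly the paper's formula $f_0=\sum_k g_k(d+1-2k)$ in different coordinates, with the same handful of small M-sequences $(1,0,\dots)$, $(1,1,0,\dots)$, $(1,1,1,0,\dots)$, $(1,2,0,\dots)$ accounting for the exceptional values $d+1,2d,3d-3,3d-1$. The only difference is organisational (you split on $h_1$, the paper on the sum of the $g_i$), and both reduce the remaining cases to the same $4d-8$ bound.
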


\begin{proof}
    This is a routine application of the $g$-theorem, for which we refer to \cite[\S8.6]{Zie95}. This asserts that the $f$-vector of any simplicial polytope is the product of an $M$-sequence and a certain matrix $M_d$.
    
    Noting that the last column of $M_d$ is always
$d+1,d-1,d-3,d-5\ldots$ and  then dualising
informs us that the number of vertices of a simple polytope is
$g_0(d+1)+g_1(d-1)+g_2(d-3)+g_3(d-5)+\ldots$ 
where $g_k$ is an $M$-sequence.

But what is an $M$-sequence? These can be defined
purely combinatorially via the boundary operator. We only need to know the following consequences of the definition:
\begin{enumerate}[(i)]
    \item $g_0=1$;
    \item $g_1$ can be an arbitrary natural number;
    \item if $g_k\le1\ (k\ne0)$, then $g_{k+1}\le g_k$.

\end{enumerate}

If  the sum of the entries of an $M$-sequence is at least four, then the corresponding value of $f_0$ is at least 
$d+1+d-1+d-3+d-5=4d-8$.

If the sum of the entries is at most three, the only possibilities are
\begin{enumerate}[(i)]
    \item $1,0\ldots$
    \item $1,1,0,\ldots$
    \item $1,1,1,0,\ldots$
    \item $1,2,0,\ldots$
\end{enumerate}

and they give the first four values of $f_0$ listed above.
\end{proof}

\begin{lemma}\label{SKsimplex}
A super-Kirkman $d$-polytope with no more than $d+4$ vertices must be a simplex.
\end{lemma}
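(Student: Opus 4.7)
The plan is to prove the lemma by induction on the dimension $d$, relying on \cref{vertexfig} and the classification of simple polytopes with few vertices in \cref{lem:simple}.

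The base case $d = 2$ is immediate: a super-Kirkman $2$-polytope requires every pair of its edges to share a vertex, which forces $P$ to be a triangle.

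For the inductive step, I assume the lemma in dimension $d - 1$ and let $P$ be a super-Kirkman $d$-polytope with $f_0 \le d + 4$. For every vertex $v$ we have $\deg v \le f_0 - 1 \le (d - 1) + 4$, and by \cref{vertexfig} the vertex figure $P/v$ is a super-Kirkman $(d-1)$-polytope with $\deg v$ vertices. The inductive hypothesis then forces $P/v$ to be a $(d-1)$-simplex, which has exactly $d$ vertices; hence $\deg v = d$ and $v$ is simple. Therefore every vertex of $P$ is simple, so $P$ is a simple polytope.

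To finish, I invoke \cref{lem:simple} on the simple polytope $P$. When $d \ge 5$ we have $d + 4 < 2d$, so the only admissible value is $f_0 = d + 1$, giving the $d$-simplex. When $d = 3$ or $d = 4$ the only non-simplex simple polytope with at most $d + 4$ vertices is the simplicial $d$-prism (on $2d$ vertices), which is ruled out by \cref{remark_sk}. I do not foresee a genuine obstacle here: the $+4$ slack in the hypothesis is exactly what is preserved when passing to a vertex figure (the bound $f_0 \le d+4$ on $P$ yields $\deg v \le (d-1) + 4$ on $P/v$), so the induction closes cleanly, and the low-dimensional cases reduce to rejecting the simplicial prism.
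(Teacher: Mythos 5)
Your proof is correct and follows essentially the same route as the paper's: induction on $d$, using \cref{vertexfig} to pass to vertex figures (with the same $+4$ slack preserved), concluding that $P$ is simple, and then invoking \cref{lem:simple} to reduce to the simplex or a low-dimensional prism, the latter being excluded because prisms are not super-Kirkman. The only difference is that you spell out the base case and the $d\in\{3,4\}$ prism exceptions slightly more explicitly than the paper does.
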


\begin{proof}
By induction on $d$; this is clear if $d=2$.

For general $d$, fix a polytope $P$ satisfying the hypotheses, and choose a vertex $v$. By the previous lemma, $P/v$ is a super-Kirkman $(d-1)$-polytope with no more than $d-1+4$ vertices. By induction $P/v$ is a simplex, so $v$ is a simple vertex. Since $v$ was arbitrary, $P$ is a simple polytope. Then \cref{lem:simple}  informs us that $P$ is either a simplex or $\Delta(1,2)$ or $\Delta(1,3)$. However no prism is super-Kirkman.
\end{proof}

\begin{theorem}\label{thm:semisimplexcess}
Let $P$ be a semisimple $d$-polytope which is not simple. Then 
\begin{enumerate}[(i)]
    \item  Every nonsimple vertex has excess degree at least 4.
    \item  If $v$ is a nonsimple vertex whose neighbours are all simple, then either $\xi(v)=2d-6$, or $\xi(v) \ge 3d-12$.
    \item  If $P$ has exactly $k$ nonsimple vertices, then 
    each nonsimple vertex has excess degree at least $2(d-k-2)$ and so $f_0(P)\ge 3d-2k-3$.
    \item  If $P$ has a unique nonsimple vertex, then its excess degree is either $2d-6$, or $\ge3d-12$. 
    \item  If $P$ has two or more nonsimple vertices, then its  excess degree is at least $4d-16$.
\end{enumerate}
\begin{proof} 
The hypothesis implies that $d\ge5$. 

(i) Suppose $v$ is a vertex with excess degree at most 3. Then $P/v$ is super-Kirkman with at most $(d-1)+3$ vertices. So $P/v$ is a simplex  by \cref{SKsimplex} and $v$ would be simple.

(ii)  The corresponding vertex figure is a simple super-Kirkman $(d-1)$-polytope but not a simplex. By \cref{lem:simple}, the vertex figure has either $3(d-1)-3$ vertices, or at least $4(d-1)-8$ vertices. Thus $v$ has either $3d-6$ neighbours, or at least $4d-12$ neighbours.

(iii) We proceed by induction on $k$. We can assume $0<k<d-3$, as there nothing to prove otherwise.
The base case $k=1$ follows easily from (ii). If $v$ is the unique nonsimple vertex, then its degree is either $3d-6$, or $\ge 4d-12$.  For $d\ge6$, $4d-12\ge3d-6$. If $d=5$, then part (i) says that the degree of $v$ is at least $5+4=3d-6>4d-12$. So $v$ has at least $3d-6$ neighbours, and  $P$ has at least $3d-5$ vertices.

Suppose the claim is true for $2, \dots ,k-1$. Suppose that $P$ has exactly $k$ nonsimple vertices, and let $v$ be one of them. By \cref{vertexfig}, $P/v$ is a $(d-1)$-super-Kirkman polytope with $k'\le k-1$ nonsimple vertices. By induction, $P/v$ has at least $3(d-1)-2k'-3\ge3d-2k-4$ vertices, i.e.  $v$ has at least $3d-2k-4$ neighbours. So $\excess(v)\ge 2d-2k-4=2(d-k-2)$ and  $P$ has at least $3d-2k-3$ vertices.

(iv) This is a special case of (ii).

(v) Suppose $P$ has $k\ge2$ nonsimple vertices. If $2\le k\le d-4$, then $P$ has excess degree at least $2k(d-2-k)\ge4d-16$. If $ k\ge d-3$, then $P$ has excess degree at least $4(d-3)>4d-16$.
\end{proof}
\end{theorem}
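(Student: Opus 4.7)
The five statements feed into each other, so I would prove them roughly in the order (i) $\Rightarrow$ (ii) $\Rightarrow$ (iv) $\Rightarrow$ (iii) $\Rightarrow$ (v), with the vertex-figure reduction as the workhorse. The common idea is: for a nonsimple vertex $v$ in a semisimple $d$-polytope $P$, \cref{vertexfig} says the vertex figure $P/v$ is a super-Kirkman $(d-1)$-polytope; then we apply the available classifications (\cref{SKsimplex}, \cref{lem:simple}, \cref{f-zero-simple}) to $P/v$ and translate back. Note first that $d\ge 5$, since in dimensions at most 4 every semisimple polytope is already simple.

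For (i), if $\xi(v)\le 3$ then $P/v$ has at most $(d-1)+4$ vertices and is super-Kirkman, so \cref{SKsimplex} forces it to be a simplex, contradicting the nonsimplicity of $v$. For (ii), a nonsimple vertex $v$ all of whose neighbours in $P$ are simple has a vertex figure $P/v$ which is simple (since a simple neighbour $w$ of $v$ contributes exactly $d-1$ two-faces through the edge $vw$) and super-Kirkman, and is not a simplex. Using \cref{lem:simple} plus the standard facts that prisms and $J(d-1)$ are not super-Kirkman (\cref{remark_sk}, \cref{Jremark}), the only remaining small possibility in the list is $\Delta(2,d-3)$, which has $3(d-1)-3=3d-6$ vertices; otherwise \cref{f-zero-simple} forces $f_0(P/v)\ge 4(d-1)-8$. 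This yields $\deg v\in\{3d-6\}\cup[4d-12,\infty)$, i.e.\ $\xi(v)=2d-6$ or $\xi(v)\ge 3d-12$. Statement (iv) is just (ii) applied to the unique nonsimple vertex.

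For (iii) I would induct on $k$. The base case $k=1$ is (iv), giving $\xi(v)\ge 2d-6=2(d-1-2)$. For the inductive step, pick any nonsimple $v$; since simple neighbours of $v$ correspond to simple vertices of $P/v$, the super-Kirkman polytope $P/v$ has at most $k-1$ nonsimple vertices, and the inductive hypothesis applied to $P/v$ yields $f_0(P/v)\ge 3(d-1)-2(k-1)-3=3d-2k-4$. This is precisely $\deg v$, so $\xi(v)\ge 2(d-k-2)$; adding $v$ gives $f_0(P)\ge 3d-2k-3$. I would double-check the small-dimensional corner (the bound $3d-12$ from (ii) is weaker than $2(d-k-2)$ when $d$ and $k$ are both small, e.g.\ $d=5$, $k=1$); the slack is covered by invoking (i) directly to get $\deg v\ge d+4$, which in $d=5$ already beats the desired $3d-6$.

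Finally (v) is a short numerical consequence of (iii). For $2\le k\le d-4$, summing the per-vertex bound gives $\xi(P)\ge 2k(d-k-2)$, and this quadratic in $k$ attains its minimum on the interval at the endpoints, both equal to $4d-16$. For $k\ge d-3$, the bound from (iii) degenerates and one falls back on (i), getting $\xi(P)\ge 4k\ge 4(d-3)>4d-16$. The main obstacle, and where I would spend most of the care, is part (ii): one must argue that the \emph{only} simple super-Kirkman $(d-1)$-polytopes relevant are the simplex, $\Delta(2,d-3)$ and polytopes with at least $4d-12$ vertices. That in turn depends on the refined numerical list in \cref{f-zero-simple} plus the (elementary but easy to forget) facts that prisms and $J(d-1)$ fail super-Kirkman.
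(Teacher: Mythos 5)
Your proposal is correct and follows essentially the same route as the paper: reduce to the vertex figure via \cref{vertexfig}, apply \cref{SKsimplex} for (i), the catalogue in \cref{lem:simple} and \cref{f-zero-simple} (discarding prisms and $J(d-1)$ as non-super-Kirkman) for (ii), induct on $k$ for (iii) with the same $d=5$ corner case handled via (i), and the same endpoint analysis of $2k(d-k-2)$ for (v). The only differences are cosmetic (your ordering routes (iii) through (iv), and you are slightly more explicit about why only $\Delta(2,d-3)$ survives the catalogue).
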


\begin{rmk}\label{remark}
    A pyramid over $\Delta(2,d-3)$ is a semisimple (in fact super-Kirkman) $d$-polytope with excess degree $2d-6$. 
    In dimensions 5 and 6, it is not the only one. 
    For any integers $s>0$ and $n>d+s$, a family of $d$-polytopes $C(d, n, s)$ was constructed in \cite[\S4]{NPUY20}  with the following properties: $C(d,n,s)$ has $n$ vertices, is 2-neighbourly provided $d\ge5$, has one facet with $d+s$ vertices, and every other facet is a simplex. It follows that each of the dual polytopes $C(d,n,s)^*$ is super-Kirkman with a unique nonsimple vertex. Careful examination of the construction shows that for every value of $n$, $C(5,n,1)^*$ has excess degree 4 and $C(6,n,1)^*$ has excess degree 6.
\end{rmk}

It is clear from \cref{thm:semisimplexcess}(i) that any nonsimple semisimple polytope has at least $d+5$ vertices. An example of such a polytope is a $(d-4)$-fold pyramid over $\Delta(2,2)$; in fact, it is super-Kirkman as well. We can now characterise semisimple polytopes with exactly $d+5$ vertices. 

\begin{lemma}\label{semisimplexamples}
    \begin{enumerate}[(i)]
        \item  The only 2-neighbourly 3-polytope is the simplex. 
        \item  The only super-Kirkman 3-polytope is the simplex. 
        \item  A simple polytope with $d+5$ vertices  is either a heptagon, a cube, a 5-wedge, $\Delta(2,2)$ or the 5-prism $\Delta(1,4)$.
    \end{enumerate}
\end{lemma}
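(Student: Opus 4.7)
For \emph{parts (i) and (ii)}, I would use short counting and duality arguments. For (i), combine Euler's formula $f_0-f_1+f_2=2$ with $f_1=\binom{f_0}{2}$ (from the 2-neighbourly hypothesis) and $3f_2\le 2f_1$ (edge--facet double count); substituting yields $f_0^2-7f_0+12\le 0$, so $f_0\le 4$ and $P$ is the 3-simplex. For (ii), dualise: a 3-polytope $P$ is super-Kirkman exactly when every two facets meet in an edge, which under polar duality is equivalent to $P^*$ being 2-neighbourly; then (i) identifies $P^*$ (and hence $P$) as the simplex.

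For \emph{part (iii)}, the plan is to split by dimension. The case $d=2$ is immediate (heptagon, since every 2-polytope is simple). For $d\ge 4$ I would apply \cref{f-zero-simple}, which constrains $f_0$ to lie in $\{d+1,2d,3d-3,3d-1\}\cup[4d-8,\infty)$: matching against $f_0=d+5$ excludes $d\ge 6$ (both branches are inconsistent), leaves $d=5$ with $f_0=2d$ and $d=4$ with $f_0=3d-3$, and \cref{lem:simple} then identifies these polytopes uniquely as $\Delta(1,4)$ and $\Delta(2,2)$ respectively.

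The main obstacle is the case $d=3$, $f_0=8$, where \cref{f-zero-simple} gives no information (the second branch is vacuous since $4d-8=4$). Here every vertex has degree $3$, so Euler forces $f_1=12$ and $f_2=6$. My plan is to invoke Steinitz's theorem and enumerate: either directly as 3-connected cubic planar graphs on $8$ vertices, or equivalently by dualising to the simplicial 3-polytopes on $6$ vertices (which have $f_1=12$, $f_2=8$). In both settings exactly two combinatorial types exist. One is the cube (dual to the octahedron, degree sequence $(4,4,4,4,4,4)$). The other has facet profile two triangles, two quadrilaterals and two pentagons (equivalently, dual degree sequence $(3,3,4,4,5,5)$); this profile is realised both by $J(3)$ (truncating one vertex of the triangular prism) and by the wedge of a pentagon over an edge (the 5-wedge), as a direct face-count check in each construction shows. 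These must therefore coincide combinatorially, completing the classification.
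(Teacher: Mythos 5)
Your proof is correct, and for parts (ii) and (iii) it follows the same skeleton as the paper: dualise (ii) to (i), and for (iii) bound the dimension using the vertex counts of simple polytopes and then enumerate. The differences are in the details. For (i), the paper simply invokes planarity of 3-polytopal graphs and the non-planarity of $K_5$; your Euler-formula computation derives the equivalent bound $f_1\le 3f_0-6$ from scratch, which is a touch longer but self-contained. For (iii), the paper only observes that a non-simplex simple polytope has at least $2d$ vertices (so $d\le5$) and then asserts the list; you instead deduce $d\le 5$ from \cref{f-zero-simple} (heavier than needed, but valid) and, more usefully, you supply the missing case analysis. In particular your explicit treatment of $d=3$ is a genuine improvement in rigour: \cref{lem:simple}(i) cannot be applied literally there (the cube has $8<3d$ vertices but is not on its list, since ``prism'' in this paper means simplicial prism), so the enumeration of the two triangulations of $S^2$ on six vertices, and the resulting identification of $J(3)$ with the 5-wedge, is exactly the verification the paper leaves implicit.
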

\begin{proof}
(i) This is well known: the easy part of Steinitz's theorem asserts that every 3-polytope has a planar graph, and the complete graph $K_5$ is not planar.

(ii) The dual polytope must be 2-neighbourly, hence a simplex.

(iii)   Thanks to \cref{lem:simple}, any simple  polytope, other than a simplex, has at least $2d$ vertices. Thus $2d\le d+5$, and the only simple polytopes with $d+5$ vertices are as listed.
\end{proof}

\begin{lemma}\label{semisimpledplus5}
    A semisimple polytope with exactly $d+5$ vertices is either simple, 2-neighbourly or pyramidal.
\end{lemma}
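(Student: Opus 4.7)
Proof proposal.

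My plan is first to show that, when $P$ is not simple, every nonsimple vertex of $P$ is adjacent to every other vertex, and then split into the two possibilities (all nonsimple vs.\ some simple). Let $P$ be a semisimple $d$-polytope with $f_0 = d+5$. Assume $P$ is not simple and pick a nonsimple vertex $v$. By \cref{thm:semisimplexcess}(i), $\xi(v) \ge 4$, i.e.\ $\deg(v) \ge d+4$; combined with $\deg(v) \le f_0 - 1 = d+4$ this forces $\xi(v) = 4$ and shows $v$ is adjacent to every other vertex. The same conclusion applies to every nonsimple vertex. If all vertices of $P$ are nonsimple, each pair of vertices shares a universal endpoint and is therefore an edge, so $P$ is 2-neighbourly. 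Otherwise $P$ has a simple vertex; a unique such vertex is impossible, since $\deg u = d$ while $u$ would have to be adjacent to all $k = d+4$ universal nonsimples, so $P$ has at least two simple vertices, and the remaining task is to prove that $P$ is pyramidal.

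For the pyramidal claim I would fix a nonsimple $v$ and show it is an apex, i.e.\ that $Q := \conv(\ver P \setminus \{v\})$ is $(d-1)$-dimensional. Assume for contradiction $\dim Q = d$. A short observation shows that any edge $uw$ of $P$ with $u,w \ne v$ is also an edge of $Q$: the supporting hyperplane $H$ of $P$ cutting out $uw$ supports $Q$ with $H \cap \ver Q = \{u,w\}$, so $H \cap Q = uw$. Fixing a simple vertex $u$, its $d-1$ non-$v$ neighbours in $P$ therefore give $d-1$ neighbours in $Q$; but $\deg_Q(u) \ge d$ since $Q$ is a $d$-polytope, producing an extra edge $uw$ of $Q$ that is not an edge of $P$. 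Since every nonsimple is universal in $P$, the partner $w$ must be simple too.

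Semisimplicity of $P$ now constrains the visible/non-visible decomposition of $Q$ as seen from $v$: the facets of $P$ not containing $v$ coincide with the non-visible facets of $Q$, and each horizon ridge $R$ of $Q$ yields a new facet $\conv(v,R)$ of $P$. Semisimplicity forces (i) any two non-visible facets share a ridge (disjointness would give $2d \le d+4$, impossible for $d \ge 5$); (ii) each horizon ridge meets every non-visible facet only along the facet it bounds (else the intersection of two facets of $P$ would have dimension at most $d-3$, neither a ridge nor empty); and (iii) any two horizon ridges share a $(d-3)$-face of $Q$ (else the two corresponding new facets meet only at $v$, again violating semisimplicity for $d \ge 3$).

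The preferred route to close the argument is induction on $d$, with base case $d \le 4$ being trivial (semisimple equals simple there). By \cref{vertexfig}, $P/v$ is super-Kirkman of dimension $d-1$ with $(d-1)+5$ vertices, so by induction $P/v$ is simple, 2-neighbourly, or pyramidal. A 2-neighbourly $P/v$ promotes itself to a 2-neighbourly $P$ (every 2-face through the universal $v$ is a triangle, so adjacency in $P/v$ yields adjacency in $P$), contradicting the existence of a simple vertex. A pyramidal $P/v$ produces a second universal vertex $u$ of $P$ (the apex of $P/v$), and an iterated-pyramid argument should close that subcase. The simple $P/v$ subcase reduces via \cref{lem:simple} to a finite list of small-dimensional candidates, which can be checked by hand. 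The main obstacle is the delicate combinatorial closure: pushing (i)--(iii) together with the $P/v$-classification to force the horizon to consist of a single ridge (so that $v$ is the apex), and handling carefully the pyramidal $P/v$ subcase, where simply producing a second universal vertex need not a priori imply that $P$ itself is pyramidal.
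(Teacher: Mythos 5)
The first half of your argument is correct and coincides with the paper's: \cref{thm:semisimplexcess}(i) gives $\xi(v)\ge 4$ for any nonsimple vertex $v$, hence $\deg(v)\ge d+4=f_0-1$, so every nonsimple vertex is adjacent to every other vertex, and if there are no simple vertices then $P$ is 2-neighbourly. The problem is the remaining case, which you do not close and explicitly flag as open. That is a genuine gap, and it is the whole substance of the lemma once the easy universality observation is in place. Your proposed route --- analysing the horizon of $Q=\conv(\ver P\setminus\{v\})$ and inducting on the vertex figure --- stalls at exactly the points you name: you cannot force the horizon to be a single ridge, and in the subcase where $P/v$ is pyramidal with apex corresponding to a neighbour $u$, you only learn that every facet of $P$ containing $v$ also contains $u$; this says nothing about the facets of $P$ avoiding $v$, so it does not make $u$ (or anything else) an apex of $P$.

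The paper closes the case by an entirely different and much shorter device. Assume $P$ is neither simple, 2-neighbourly, nor pyramidal, and let $k$ be the number of nonsimple vertices. Since each simple vertex has degree exactly $d$ and must be adjacent to all $k$ universal nonsimple vertices, $k\le d$. The key imported ingredient is \cite[Theorem 2.1]{PinUgoYosClSmp}: a \emph{non-pyramidal} $d$-polytope with $k<d$ nonsimple vertices has, for each nonsimple vertex, at least $d-k$ simple \emph{non-neighbours}. This flatly contradicts universality, so $k=d$. Then there are exactly $5$ simple vertices, each adjacent to all $d$ nonsimple vertices and hence to none of the other simple ones, i.e.\ they form an independent set of size $5$. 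But $d$ points always lie in a hyperplane, and deleting the vertices of $P$ in a hyperplane leaves a graph with at most two components (the short separation argument in the paper's parenthetical), whereas five independent simple vertices would be left as five components. Without the cited non-pyramidality theorem, or some substitute for it, your argument does not reach a proof; I would recommend abandoning the horizon analysis and using that result directly.
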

\begin{proof}
    Suppose $P$ is neither simple nor 2-neighbourly. Then there is at least one simple vertex and at least one nonsimple vertex. By \cref{thm:semisimplexcess}(i), each vertex is either simple or adjacent to every other vertex. In particular, any  vertex with degree $d$ is adjacent to every nonsimple vertex. This implies that there are at most $d$ nonsimple vertices. 
    
    Suppose $P$ is not pyramidal either. If $P$ has  $k<d$ nonsimple vertices, \cite[theorem 2.1]{PinUgoYosClSmp} asserts that each nonsimple vertex had at least $d-k$ simple non-neighbours, which is impossible. Thus $P$ has exactly $d$ nonsimple vertices. Then there are 5 simple vertices, which are adjacent to each nonsimple vertex, but not to each other. In particular, the 5 simple vertices form an independent set. This is absurd, because
    the $d$ nonsimple vertices lie in a hyperplane, and so their removal would leave a graph with at most 2 components. (Denoting the hyperplane by $H$, this conclusion is clear if $H$ supports a face of $P$. Otherwise $H$ intersects the interior of $P$. Let $H_1,H_2$ denote the two closed half-spaces whose boundary is $H$, and set $P_i=H_i\cap P$. Then $F=H\cap P$ is a facet of each $P_i$. The subgraph of $P_i$ ($i=1,2$) induced by the vertices outside $F$ is connected. In the case that there are no other vertices in $F$, it is clear that we have at most two components. In the case that there are some other vertices in $F$, since each vertex in $F$ is incident to an edge outside $F$, this leaves a connected graph.)
\end{proof}

Finally we are able to characterise the super-Kirkman polytopes with $d+5$ vertices.

\begin{theorem}\label{deeplusfive}
    Any super-Kirkman polytope with $d+5$ vertices is a multifold pyramid over $\Delta(2,2)$. The only other semisimple polytopes with $d+5$ vertices are the simple examples listed in \cref{semisimplexamples}.
\end{theorem}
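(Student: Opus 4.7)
The plan is induction on $d$. For the base case $d = 4$, semisimple polytopes in dimensions at most $4$ are simple (as noted after \cref{remark_sk}), so \cref{semisimplexamples}(iii) identifies $\Delta(2,2)$ as the unique option; it is super-Kirkman and is trivially a $0$-fold pyramid over itself. For the inductive step ($d \ge 5$), I invoke \cref{semisimpledplus5}: $P$ is simple, $2$-neighbourly, or pyramidal, not necessarily mutually exclusive. In the \emph{simple} case, \cref{semisimplexamples}(iii) lists all possibilities; \cref{f-zero-simple} excludes simple $d$-polytopes with $d+5$ vertices for $d \ge 6$, and for $d = 5$ the only option is $\Delta(1,4)$, which has two disjoint facets and so is not super-Kirkman, accounted for in the ``simple examples'' clause.

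In the \emph{pyramidal} case, write $P$ as a pyramid with apex $v$ over a $(d-1)$-polytope $Q$ with $(d-1)+5$ vertices. Any two facets of $P$ of the form $\text{pyr}(G_1), \text{pyr}(G_2)$ (with $G_i$ facets of $Q$) share the apex $v$, hence their intersection $\text{pyr}(G_1 \cap G_2)$ is nonempty; semisimplicity then forces $\dim \text{pyr}(G_1 \cap G_2) = d-2$, so $G_1 \cap G_2$ is a ridge of $Q$. Thus $Q$ is super-Kirkman, and the inductive hypothesis presents $Q$ as a multifold pyramid over $\Delta(2,2)$, so $P$ is too.

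The crux is the \emph{$2$-neighbourly} case, which I aim to rule out. First, for $d \ge 6$ any semisimple $d$-polytope with $d+5$ vertices is automatically super-Kirkman, since two disjoint facets would use at least $2d > d+5$ vertices. Hence for $d \ge 6$ it suffices to rule out $2$-neighbourly super-Kirkman $P$, with $d = 5$ requiring brief separate treatment. By the inductive hypothesis $P/v$ is a $(d-5)$-fold pyramid over $\Delta(2,2)$, so each $v$ lies in exactly $d+1$ facets and each facet $F$ of $P$ inherits $2$-neighbourliness. The vertex figures of $F$ at its vertices are facets of $(d-5)$-fold pyramids over $\Delta(2,2)$, which have either $d+1$ or $d+3$ vertices; a short computation then forces $|V(F)| \in \{d+2, d+3, d+4\}$. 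Summing $\sum_F |V(F)| = (d+5)(d+1)$ yields contradictions for $d = 5$ (every facet has $d+2 = 7$ vertices, forcing $7 f_{d-1} = 60$) and for every even $d$ (the right-hand side $d^2+6d+5$ is odd while the left-hand side is even). The main remaining obstacle is odd $d \ge 7$, where this counting admits solutions; I expect to finish by exploiting the $(d-5)$-simplex face of $P$ at each $v$ (spanned by $v$ and its $d-5$ apex neighbours, corresponding to the apex simplex of $P/v$) to derive either a direct contradiction or that $P$ must be pyramidal, which by the previous case forces $P$ to be a multifold pyramid over $\Delta(2,2)$---a polytope with non-adjacent vertex pairs, and hence not $2$-neighbourly, contradicting the hypothesis.
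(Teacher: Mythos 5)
Your base case, your treatment of the simple case, and your pyramidal case are all sound and essentially match the paper (the paper gets super-Kirkmanness of the base $Q$ via \cref{vertexfig} applied to the apex rather than by intersecting the pyramidal facets directly, but that is a cosmetic difference). The problem is the $2$-neighbourly case, which you do not actually close: your facet-counting argument, as you concede, admits solutions for all odd $d\ge7$, and the concluding sentence is a statement of hope rather than a proof. As written this is a genuine gap, and it is the heart of the theorem. (A smaller quibble: since each facet $F$ is itself $2$-neighbourly, $|V(F)|=|V(F/v)|+1\in\{d+2,d+4\}$; the value $d+3$ you list cannot occur, and if it could your parity argument for even $d$ would also break.)

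The frustrating part is that you already hold the winning card. You correctly observe that $P/v$ is super-Kirkman with $(d-1)+5$ vertices, hence by induction a $(d-5)$-fold pyramid over $\Delta(2,2)$. Such a polytope contains a quadrilateral $2$-face. On the other hand, every $3$-face of a $2$-neighbourly polytope is $2$-neighbourly, hence a simplex by \cref{semisimplexamples}(i); and every $2$-face of $P/v$ arises by truncating a vertex of a $3$-face of $P$, so it must be a triangle. This contradiction eliminates the $2$-neighbourly case uniformly for all $d\ge5$, with no counting and no parity split --- this is exactly how the paper finishes. I would replace your entire counting argument with this observation.
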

\begin{proof}
    By \cref{semisimplexamples}(iii), the only simple super-Kirkman $d$-polytope with $d+5$ vertices is $\Delta(2,2)$. If $d\le4$, every vertex figure is super-Kirkman, hence a simplex, hence $P$ is simple. This establishes the base case for an inductive proof.

    If $d\ge5$, we know that $P$ is not simple. If $P$ is 2-neighbourly, then so is every 3-face, and  \cref{semisimplexamples}(i) ensures that every 3-face is a simplex. By induction every vertex figure of $P$ is a multifold pyramid over $\Delta(2,2)$. In particular, every vertex figure contains a quadrilateral face. But every 2-face in a vertex figure arises from truncating a 3-face of $P$, and truncating a simplex cannot produce a quadrilateral. So \cref{semisimpledplus5} forces $P$ to be pyramidal, and the induction  is clear.
\end{proof}

We finish this section with a study of the possible number of vertices of semisimple and super-Kirkman polytopes.

All the examples given by the next result are semisimple as well.

\begin{theorem}\label{SKexamples}
Fix a positive integer $k\ge7$. Then for all $d$ sufficiently large, there are super-Kirkman polytopes of dimension $d$ with $d+k$ vertices.   
\end{theorem}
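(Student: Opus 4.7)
The plan is to reduce the statement to a finite problem. Since multifold pyramids over a super-Kirkman polytope are super-Kirkman (by \cref{SKconstructions}(iv)) and preserve the quantity $k := f_0 - d$, it is enough to exhibit, for each fixed $k \ge 7$, a single super-Kirkman polytope with this $k$-value; pyramids over it then yield super-Kirkman $d$-polytopes with $d+k$ vertices for every $d$ at least the dimension of the chosen example, which gives the theorem after enlarging the dimension as required.

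The basic building blocks are the products $\Delta(m,n)$ with $m,n \ge 2$, super-Kirkman by \cref{SKconstructions}(vi) with $k$-value $mn+1$, together with free joins, super-Kirkman by \cref{SKconstructions}(iii) with $k(P*Q)=k(P)+k(Q)-1$. For odd $k \ge 7$, writing $k = 2n+1$ with $n \ge 3$, take $\Delta(2,n)$; this is super-Kirkman of dimension $n+2$ with $3(n+1) = (n+2) + k$ vertices. For $k=10$, take $\Delta(3,3)$. For every even $k \ge 14$, take the free join $\Delta(3,3) * \Delta(2,(k-10)/2)$, whose $k$-value is $10 + 2(k-10)/2 = k$.

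The hard part will be the two exceptional values $k=8$ and $k=12$. Neither can be expressed as $1 + \sum_i m_i n_i$ with all $m_i, n_i \ge 2$, because $k-1 \in \{7, 11\}$ is prime and cannot be written as a sum of composite integers each at least $4$; hence free joins of the basic $\Delta(m,n)$ alone do not reach these values, and a dedicated construction is needed. To handle them I would invoke the family $C(d,n,s)^*$ recalled in \cref{remark}, which provides, for each $d \ge 5$ and each suitable choice of $n$ and $s$, a super-Kirkman $d$-polytope whose number of vertices equals the number of facets of $C(d,n,s)$ and grows with $n$; tuning the parameters (in particular taking $s=1$ and choosing $n$ so that the facet count is $d+8$ or $d+12$) should deliver the required examples. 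Alternatively, one could try an iterated wedge over a carefully chosen super-Kirkman polytope, exploiting the formula $k \mapsto k + s - 1$ from \cref{SKconstructions}(v) for a facet of appropriate size $s$. Once single examples are secured for each of the two missing values, pyramids finish the construction for all sufficiently large $d$.
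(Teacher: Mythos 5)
Your reduction to a single example per $k$ via multifold pyramids, and your constructions for odd $k\ge7$, for $k=10$ via $\Delta(3,3)$, and for even $k\ge14$ via the free join $\Delta(3,3)*\Delta(2,n)$, coincide with the paper's proof; your observation that $k-1\in\{7,11\}$ cannot be written as a sum of composites each at least $4$ correctly isolates $k=8$ and $k=12$ as the genuinely hard cases. But precisely there your argument has a gap. The family $C(d,n,s)^*$ cannot simply be ``tuned'' to produce the missing examples: the number of vertices of $C(d,n,s)^*$ equals the number of facets of $C(d,n,s)$, for which \cref{remark} supplies no formula, and the classification of Maksimenko, Gribanov and Malyshev in fact shows that there are \emph{no} 2-neighbourly polytopes with $d+6$ facets and that $P_{6,10,14}$ is the \emph{only} nonpyramidal one with $d+8$ facets --- so there is no reason to expect any member of that family to hit a prescribed facet count such as $d+8$ or $d+12$. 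The paper settles $k=8$ by taking the dual $Q$ of the specific polytope $P_{6,10,14}$ of Maksimenko et al., a $6$-dimensional super-Kirkman polytope with $14=6+8$ vertices, and settles $k=12$ by wedging $Q$ over a facet $F$ with nine vertices, obtaining a $7$-dimensional super-Kirkman polytope with $14+14-9=19=7+12$ vertices.

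Your fallback via wedges also rests on an incorrect formula: if $P$ is a $d$-polytope with $d+k$ vertices and $F$ is a facet with $s$ vertices, then $W(P,F)$ has $2(d+k)-s$ vertices in dimension $d+1$, so its $k$-value is $k+(d+k-s)-1$; that is, $k$ increases by one less than the number of vertices \emph{outside} $F$, not by $s-1$. Even with this corrected, the wedge route only helps once you already possess a super-Kirkman polytope with a suitable facet, which for these residues again requires the $k=8$ example from the literature. So the two exceptional values $k=8$ and $k=12$ remain unproved in your proposal.
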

\begin{proof}
Note that repeatedly forming pyramids over a $d_0$-dimensional super-Kirkman polytope with $d_0+k$ vertices will give us  $d$-dimensional super-Kirkman polytopes with $d+k$ vertices for all $d\ge d_0$. So for each $k$, we only need to establish the existence for one value of $d$.

First suppose $k-1$ is composite, say $k-1=mn$ where $m,n\ge2$. Then $\Delta(m,n)$ is an $(n+m)$-dimensional super-Kirkman polytope with $(m+1)(n+1)=n+m+k$ vertices. In particular, the conclusion holds for all odd $k\ge7$, and for $k=10$.

Next suppose $k$ is even, and $k\ge14$. Then $k-10\ge4=2n$ for some $n\ge2$. The free join of $\Delta(3,3)$ and $\Delta(2,n)$ is a super-Kirkman polytope with dimension $d=6+n+2+1$ and $f_0=16+3(n+1)=d+k.$

The case $k=8$ was essentially resolved by Maksimenko, Gribanov and Malyshev \cite[Section 4]{MGM19}. They found a polytope denoted $P_{6,10,14}$, and showed it is the only example (in any dimension) of a nonpyramidal 2-neighbourly $d$-polytope with $d+8$ facets. It has dimension 6 and 10 vertices. Naturally its dual $Q$ is a 6-dimensional super-Kirkman polytope with 14 vertices. 

Maksimenko et al. also gave a representation of $P_{6,10,14}$ as a 0-1 polytope, so its face lattice is not hard to analyse. It turns out that $Q$ has a facet $F$ (actually four) with nine vertices. By \cref{SKconstructions}(iii), the wedge $W(Q,F)$  is a 7-dimensional super-Kirkman polytope with 14+14-9=7+12 vertices. This establishes the last remaining case, $k=12$.
\end{proof}

In a remarkable tour de force, Maksimenko et al.  \cite{MGM19} actually classified all 2-neighbourly $d$-polytopes with no more than $d+9$ facets. By duality, this gives a classification of all super-Kirkman polytopes with up to $d+9$ vertices. Thus \cref{deeplusfive} is a special case of their result; however our proof of this case is shorter. 

In particular they \cite{MGM19} showed that there are no 2-neighbourly polytopes with $d+6$ facets. By duality there are no super-Kirkman polytopes with $d+6$ vertices; this implies that in a semisimple polytope, there are no vertices with excess degree five. It is easy to verify that the octagon and the 6-prism are the only simple polytopes with $d+6$ vertices. We do not know whether there are any other semisimple polytopes with $d+6$ vertices.


\section{Structure of polytopes with low excess}\label{structure}

Strong structural results for polytopes with excess degree $d-2$ were given in \cite{PinUgoYosEXC}. Here we present some technical results about the structure of polytopes with slightly higher excess degree, generally in the range $[d-1,2d-7]$. We finish the section with a strong classification of polytopes with excess degree $d+2$. Corresponding results for other values of the excess degree are given in subsequent sections.

When we restrict the excess degree to be less than $2d-6$, \cref{basic} ensures that the intersection of any two facets in a $d$-polytope, if nonempty, has dimension either 0, $d-3$ or $d-2$, i.e.  either a single vertex, a subridge or a ridge of the polytope.

\begin{lemma}\label{notvertex}
    Let $P$ be a $d$-polytope with excess degree in the range $[d-1, 2d-6]$. Then no two facets intersect in a single vertex.
\end{lemma}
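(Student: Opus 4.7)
The plan is to argue by contradiction. Suppose two facets $F_1, F_2$ of $P$ meet in a single vertex $v$ while $\xi(P) \in [d-1, 2d-6]$.

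First, I apply \cref{basic}(ii) with $j = 0$ (noting $\xi(F_1 \cap F_2) = 0$ for the single-point face) to get $\xi(P) \ge \max\{\xi(F_1), \xi(F_2)\} + (d-2)$; together with $\xi(P) \le 2d-6$ this forces $\xi(F_i) \le d-4$ for $i = 1, 2$. Since every nonsimple $(d-1)$-polytope has excess at least $d-3$ by \cref{thm:excess}(iii), both $F_1$ and $F_2$ must be simple. Consequently $\deg_{F_i}(v) = d-1$; since the neighbour sets are disjoint by $F_1 \cap F_2 = \{v\}$, we obtain $\deg_P(v) \ge 2d-2$ and $\xi(v) \ge d-2$. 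The vertex figure $P/v$ is therefore a $(d-1)$-polytope containing the two disjoint $(d-2)$-simplex facets $F_1/v$ and $F_2/v$.

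Next I show that every other facet $G$ of $P$ through $v$ meets both $F_1$ and $F_2$ in a ridge of $P$, which reproduces the forbidden configuration one dimension down. Under $\xi(P) \le 2d-6$, \cref{basic}(iv) forces $\dim(G\cap F_i) \in \{0, d-2\}$ (the endpoint $\xi(P) = 2d-6$, where that lemma fails to give strict separation, requires a small separate argument). But the simplex facet $F_1/v$ has $d-1$ ridges, each shared with an adjacent facet of $P/v$ distinct from the disjoint $F_2/v$: hence each such facet of $P/v$ corresponds to some $G$ with $G\cap F_1$ a ridge of $P$, and symmetrically $G\cap F_2$ is a ridge of $P$. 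Writing $R_1 := G\cap F_1$ and $R_2 := G\cap F_2$, these are facets of the $(d-1)$-polytope $G$ with $R_1 \cap R_2 \subseteq F_1\cap F_2 = \{v\}$, so $R_1 \cap R_2 = \{v\}$.

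The main step is to close the argument by induction on $d$; the base cases $d \le 4$ are vacuous as $[d-1, 2d-6]$ is empty. \cref{basic}(i) applied in $G$ gives $\xi_G(v) \ge d-3$; the inductive hypothesis forbids $\xi(G) \in [d-2, 2d-8]$; and face monotonicity $\xi(G) \le \xi(P) \le 2d-6$ leaves only $\xi(G) = d-3$ or $\xi(G) \in \{2d-7, 2d-6\}$. In the minimal sub-case $\xi(G) = d-3$, equality in the bound forces $v$ to be the unique nonsimple vertex of $G$, and the catalogue of simple polytopes (\cref{lem:simple}) pins $G$ down as the pyramid over the simple polytope $\Delta(1, d-3)$; globalising this structure over the $d-1$ facets $G_i$ through $v$ forces $P$ itself to be the pyramid over $\Delta(1, d-2)$, giving $\xi(P) = d-2 < d-1$ and contradicting the hypothesis. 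In the other sub-case, $\xi(G) \ge 2d-7$ produces genuine nonsimple vertices in $G \setminus \{v\}$, and hence in $P$; summing these contributions over the $d-1$ facets $G_i$ pushes $\xi(P)$ above $2d-6$.

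The hardest part is the bookkeeping in this last step: the $G_i$ share many vertices (all of them sit in the prism-like neighbourhood of $v$), so the excess contributions must be combined carefully without double counting. One also needs to verify that the minimal sub-case really forces $P$ to be the pyramid over $\Delta(1, d-2)$, and to dispose of the boundary value $\xi(P) = 2d-6$ where \cref{basic}(iv) no longer supplies a strict separation of facet-intersection dimensions.
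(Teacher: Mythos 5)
Your opening step is fine and matches the paper's starting point: \cref{basic} with $j=0$ gives $\xi(v)\ge d-2$ and forces both facets to be simple. But the reduction that drives your induction fails. You claim that \cref{basic}(iv) forces $\dim(G\cap F_i)\in\{0,d-2\}$ for every further facet $G$ through $v$. It does not: the lemma only excludes intersection dimensions in $[1,d-4]$, so a subridge intersection ($\dim = d-3$) is entirely compatible with excess degree below $2d-6$ (indeed \cref{structure1} and \cref{structure2} exist precisely to handle that case, and \cref{lem:excess-d-part-2} exhibits such subridges in polytopes of excess $d$). This particular hole is patchable when $v$ itself lies in the subridge (then $v$ contributes $\ge d-2$ and the other $\ge d-3$ subridge vertices contribute $\ge 1$ each, exceeding $2d-6$), but you also never rule out $G\cap F_2=\{v\}$, which merely reproduces the configuration rather than yielding ridges on both sides.

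The larger problem is that the inductive step is asserted rather than proved. In the sub-case $\xi(G)=d-3$, knowing that $v$ is the unique nonsimple vertex of $G$ with $\xi_G(v)=d-3$ only tells you the vertex figure of $v$ in $G$ is a $(d-2)$-prism; it does not pin $G$ down as a pyramid over $\Delta(1,d-3)$ (it may have arbitrarily many simple vertices), and the ``globalisation'' to $P$ being a pyramid over $\Delta(1,d-2)$ has no argument behind it. In the sub-case $\xi(G)\ge 2d-7$, your claim that $G\setminus\{v\}$ must contain nonsimple vertices is false, since $v$ alone can carry all of $\xi(G)$; and the double-counting issue you flag when summing over the $G_i$ is exactly the unresolved content. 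Finally, the induction has no effective base: for $d=5$ and $d=6$ the inductive hypothesis is vacuous, so the trichotomy on $\xi(G)$ collapses and the argument gives nothing. The paper's proof avoids all of this with two short cases after establishing $\xi(v)\ge d-2$: if every neighbour of $v$ is simple, then $P/v$ is a simple $(d-1)$-polytope whose vertex count falls in a gap forbidden by \cref{lem:simple}; otherwise $v$ has a nonsimple neighbour $w$ of excess $k$, and \cref{thm:excess}(i) gives $w$ at least $d-k-2$ nonsimple neighbours, whence $\xi(P)\ge k+(d-k-3)+(d-2)=2d-5>2d-6$. I would recommend abandoning the induction and adopting that counting argument.
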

\begin{proof} Suppose the contrary, let $\{v\}$ be the intersection of two facets. Then $v$ has excess degree at least $d-2$. If $v$ has only simple neighbours, then its vertex figure is a simple $(d-1)$-polytope with between $2d- 1$ and $3d-7$ vertices, which is impossible by \cref{lem:simple}(iv).

So we suppose that there is another nonsimple vertex, say $w$, adjacent to $v$; let $k$ be its excess degree. Note that $k\le d-5$. By \cref{thm:excess}(i), $w$ is adjacent to at least $d-k-2$ nonsimple vertices, including $v$. Hence, the total excess degree of $P$ is at least $k + (d-k-2-1) + (d-2) = 2d-5$, contradicting our hypotheses.
\end{proof}

\begin{lemma}\label{vertex-simple-in-facet}
Let $P$ be a $d$-polytope, $v$  a nonsimple vertex which is simple in  a facet $F$. 
\begin{enumerate}[(i)]
    \item There is another facet $F'$ also containing $v$ such that $F\cap F'$ is not a ridge of the polytope.
    \item Suppose further that $P$ has excess degree in the range $[d-1, 2d-7]$. Then there is another facet $F'$ also containing $v$ such that $F\cap F'$ is a subridge of the polytope.
\end{enumerate}
\end{lemma}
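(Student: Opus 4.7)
The plan is to handle (i) via a pigeonhole count in the vertex figure $P/v$, and then to deduce (ii) by combining (i) with \cref{notvertex} and \cref{basic}.

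\textbf{Plan for (i).} The natural setting is the vertex figure $P/v$, a $(d-1)$-polytope whose facets correspond to the facets of $P$ containing $v$. Since $v$ is simple in $F$, the facet $G := F/v$ of $P/v$ has $d-1$ vertices (one per edge of $F$ through $v$) and dimension $d-2$, so $G$ is a $(d-2)$-simplex. Since $v$ is nonsimple in $P$, the polytope $P/v$ has at least $d+1$ vertices; hence $P/v$ is not a simplex and must carry at least $d+1$ facets. A pigeonhole argument now suffices: every ridge of $P/v$ lying inside $G$ is a facet of $G$, and, $G$ being a simplex, there are only $d-1$ such facets; each of them is shared with exactly one facet of $P/v$ other than $G$. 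So at most $d-1$ facets of $P/v$ distinct from $G$ meet $G$ in a ridge of $P/v$. Subtracting these and $G$ itself leaves at least one facet $G'$ of $P/v$ with $\dim(G \cap G') \neq d-3$; the corresponding facet $F' \neq F$ of $P$ contains $v$ and satisfies $\dim(F \cap F') = \dim(G \cap G') + 1 \neq d-2$.

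\textbf{Plan for (ii).} Starting from the $F'$ produced by (i), set $j := \dim(F \cap F')$. Since $v \in F \cap F'$ we have $j \ge 0$, and since $F \cap F'$ is not a ridge of $P$ we have $j \le d-3$. I would rule out every value of $j$ except $d-3$. If $j = 0$, then two facets intersect in the single vertex $v$, forbidden by \cref{notvertex} because $\xi(P) \in [d-1, 2d-6]$. If $1 \le j \le d-4$, then \cref{basic}(ii) supplies $\xi(P) \ge (j+1)(d-2-j)$, a concave quadratic in $j$ whose minimum over $\{1, \dots, d-4\}$ is attained at both endpoints and equals $2d-6$, contradicting $\xi(P) \le 2d-7$. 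The remaining possibility $j = d-3$ is exactly the claim that $F \cap F'$ is a subridge.

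\textbf{Anticipated difficulty.} The main subtlety is the tightness of the pigeonhole count in (i): that $G$ is a simplex is crucial, since a simplex has the minimum possible number ($d-1$) of subfacets, just small enough to leave slack against the at least $d+1$ facets of $P/v$. So the argument critically uses that $v$ is simple in $F$, not merely nonsimple in $P$. Once (i) is in hand, the hypothesis $\xi(P) \in [d-1, 2d-7]$ aligns exactly with \cref{notvertex} and \cref{basic}(ii) to pin down $\dim(F \cap F') = d-3$; beyond verifying the arithmetic bound on $(j+1)(d-2-j)$, no further obstacles are anticipated.
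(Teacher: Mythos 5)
Your proposal is correct and takes essentially the same route as the paper: part (i) is the paper's pigeonhole count (the $d-1$ ridges of $F$ through $v$ account for at most $d-1$ facets besides $F$ meeting $F$ in a ridge, while the nonsimple vertex $v$ lies in at least $d+1$ facets), merely transported into the vertex figure $P/v$; part (ii) eliminates $\dim(F\cap F')=0$ via \cref{notvertex} and $\dim(F\cap F')\in[1,d-4]$ via the bound $(j+1)(d-2-j)\ge 2d-6$ from \cref{basic}, exactly as the paper does.
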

\begin{proof}
(i) Since $v$ is simple in $F$, the vertex $v$ is contained in exactly $d-1$ $(d-2)$-faces within $F$. These correspond to $d-1$ facets containing $v$ and intersecting $F$ in a ridge of the polytope. But $v$ is not simple, so it belongs to at least $d+1$ facets. Thus there is at least one more facet containing $v$, and its intersection with $F$ cannot be a ridge of the polytope.

(ii)  \cref{notvertex} ensures that $F\cap F'$ cannot be a single vertex, so by \cref{basic} it can only be a subridge of the polytope.
\end{proof}

The following lemma is one of the key observations in the discussion.

\begin{lemma}\label{structure1}
Let $P$ be a $d$-polytope with excess degree in the range $[d-1, 2d-7]$. Let
$S$ be a $(d-3)$-face which is the intersection of two facets. Then $S$ is a simplex, both facets are simple, and no vertex in $S$ has any neighbours outside these two facets. In particular, every vertex in $S$ has excess degree exactly 1.   
\end{lemma}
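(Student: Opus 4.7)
The plan is to derive the conclusions in four stages, the last being the main obstacle.

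\textbf{Stages 1--3: simplicity, shape of $S$, and local count.} Apply \cref{basic}(ii) to $F_1,F_2$ with $j=\dim(F_1\cap F_2)=d-3$: this yields $\xi(P)\ge \max\{\xi(F_1),\xi(F_2),\xi(S)\}+(d-2)$, so $\xi(F_1),\xi(F_2),\xi(S)\le d-5 < d-3 = (d-1)-2$. By the Excess theorem (\cref{thm:excess}(iii)), both $F_1$ and $F_2$ are simple, and $S$, as a face of a simple polytope, is simple. By \cref{basic}(i) each vertex of $S$ has excess at least $1$ in $P$, so $\xi(P)\ge |S|$. By \cref{lem:simple}, the simple $(d-3)$-polytope $S$ has either $d-2$ vertices (as a simplex) or at least $2d-6$ vertices; the latter would violate $\xi(P)\le 2d-7$, so $S$ is a $(d-3)$-simplex. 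Now fix $u\in S$. Simplicity of $F_1$ together with the fact that $S$ is a codim-$2$ simplex face of $F_1$ means $S=T_1\cap T_2$ for two facets $T_1,T_2$ of $F_1$, each a simple $(d-2)$-polytope with $u$ simple and $S$ a facet. A direct count gives $d-3$ neighbours of $u$ in $S$, one in $T_1\setminus S$, and one in $T_2\setminus S$; a symmetric count in $F_2$ contributes two more. Hence $u$ has at least $d+1$ neighbours in $F_1\cup F_2$, with equality precisely when $u$ has no external neighbours, and $\xi(u)\ge 1$.

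\textbf{Stage 4: no external neighbours (main obstacle).} Suppose, for contradiction, that some $u\in S$ has a neighbour $w\notin F_1\cup F_2$, and let $F_3$ be a facet of $P$ containing the edge $uw$. By \cref{notvertex}, $\dim(F_1\cap F_3),\dim(F_2\cap F_3)\in\{d-3,d-2\}$. Case analysis then leads to a contradiction of $\xi(P)\le 2d-7$. If $F_1\cap F_3$ (or by symmetry $F_2\cap F_3$) equals $S$ itself, then $(F_1,F_3)$ is another subridge pair through $S$; applying Stage 3 to this new pair shows that the ``$F_2$-side'' neighbours of each $u'\in S$ lie outside $F_1\cup F_3$, forcing $\xi(u')\ge 2$ for every $u'\in S$ and hence $\xi(P)\ge 2(d-2)=2d-4$. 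If one intersection is a subridge $S'$ different from $S$, a vertex-figure analysis at $u$ in $F_1$ constrains the size of $S\cap S'$, and the combined excess from $|S\cup S'|$ vertices (together with the extra excess at $u$ and further propagation) once more overshoots $2d-7$. Finally, if both intersections are ridges then, after symmetry, $F_3$ contains two distinct $(d-2)$-faces of $P$ (one in $F_1$, one in $F_2$) whose intersection is $S$; counting $u$'s neighbours in $F_3$ yields at least $(d-2)+(d-2)-(d-3)+1=d$, so $u$ is nonsimple in $F_3$ and the Excess theorem gives $\xi(F_3)\ge d-3$. Since every $v\in S$ has at least two neighbours outside $F_3$ (one in $F_1\setminus F_3$, one in $F_2\setminus F_3$), comparing $\xi_P$ with $\xi_{F_3}$ on the vertices of $F_3$ yields $\xi(P)\ge \xi(F_3)+|S|\ge (d-3)+(d-2)=2d-5>2d-7$, the desired contradiction.

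The crux is Stage 4, where the hypothesis $\xi(P)\le 2d-7$ is used most tightly. The essential techniques are the propagation of the ``external neighbour'' phenomenon across all vertices of $S$ via an auxiliary subridge pair, and the identification of a nonsimple auxiliary facet whose excess combines with the degrees at vertices of $S$ to violate the budget.
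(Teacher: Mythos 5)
Your Stages 1--3 reproduce the paper's opening faithfully: the bound $\xi(F_1),\xi(F_2),\xi(S)\le d-5$ from \cref{basic}(ii), simplicity of both facets, the simplex conclusion for $S$ via \cref{lem:simple}, and the count showing each $u\in S$ has exactly $d+1$ neighbours inside $F_1\cup F_2$. The genuine gap is in Stage 4. Your strategy is to show that \emph{every single} facet $F_3$ containing the edge $[u,w]$ individually forces $\xi(P)>2d-7$; but this cannot succeed, because one type of facet survives all such local arguments. Your trichotomy omits the case where $F_3\cap F_1$ and $F_3\cap F_2$ are both ridges of $P$ but $F_3$ contains only $d-3$ of the $d-2$ vertices of $S$. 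There the two ridges meet in a $(d-4)$-face of $S$, your neighbour count $(d-2)+(d-2)-(d-3)+1$ does not apply, and --- more importantly --- the final inequality $\xi(P)\ge\xi(F_3)+|S|$ fails: a vertex $v\in S\cap F_3$ has only \emph{one} neighbour outside $F_3$, namely the omitted vertex of $S$, which lies in both $F_1$ and $F_2$, so one only gets $\xi_P(v)\ge\xi_{F_3}(v)$ and no contradiction with $\xi(P)\le 2d-7$. Such facets genuinely cannot be excluded one at a time.

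The paper closes the argument globally rather than locally: it classifies all facets containing $[u,w]$, rules out every type except ``both intersections are ridges and exactly one vertex of $S$ is omitted,'' observes that each choice of omitted vertex determines at most one such facet (so there are at most $d-3$ of them), and then invokes the fact that an edge of a $d$-polytope lies in at least $d-1$ facets. That counting step is the missing ingredient; without it, or some substitute, your Stage 4 does not terminate in a contradiction. Two smaller issues: your middle case (one intersection a subridge $S'\ne S$) is only asserted, whereas the paper disposes of those subcases by explicit degree counts showing $u$ would be nonsimple in a facet forced to be simple; and in your first case the claim that the $F_2$-side neighbours of every $u'\in S$ lie outside $F_1\cup F_3$ needs justification (they could a priori lie in $F_2\cap F_3$) --- the paper instead uses a vertex of $S$ with excess degree exactly $1$ to force $F_3=F_2$, contradicting $w\in F_3\setminus F_2$.
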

\begin{proof}
Let $S = F_1 \cap F_2$. Observe first that, because $\xi(P)\le2d-7$, $F_1$
and $F_2$ are both simple. (Otherwise the excess degree of the facet would be greater than $d-3$ and each of the vertices in $S$ would contribute excess degree at least 1, so the total excess degree of $P$ would be at least $d-3+d-2=2d-5$.)
Hence $S$ is also simple. Then, $S$ must be a simplex (otherwise $S$ would contain at least $2d-6$ vertices, all nonsimple in $P$). Note that there are at least 3 vertices of $S$ with excess degree exactly 1; otherwise $P$ would have excess degree at least $2(d-4)+2=2d-6$. Denote three of them by $v_1, v_2, v_3$. 

Suppose there is a vertex $u\in S$ with excess degree $> 1$. Then there is a vertex $w\notin F_1\cup F_2$ which is adjacent to $u$. Consider the facets $F$ containing the edge $[u,w]$. The number of such facets should be at least $d-1$. We will show that there are not this many such facets, which is a contradiction.

For any such $F$, $F \cap F_1$ must be either a subridge or a ridge of the polytope. This intersection must contain at least $d-3$ neighbours of $u$ in $F_1$, and so omits at most 2 members of $S$. In particular, $F$ must contain one of  $v_1, v_2, v_3$.

If $F$ omits 2 members of $S$, it must contain both neighbours of $u$ in $F_1\setminus S$, and likewise both neighbours of $u$ in $F_2\setminus S$. But then $F\cap F_1$ is a subridge of the polytope and $F$ is simple. However the degree of $u$ in $F$ is at least $(d-3)+2+2+1>d-1$, and so $u$ is not simple in $F$. This contradiction shows that $F$ omits at most 1 member of $S$.

Consider now the case that $F$ contains $S$. There are two subcases to consider, depending whether each $F\cap F_i$ is a subridge or a ridge of the polytope.

If $F \cap F_1$ is a subridge of the polytope, then $F$ is simple and $F\cap F_1= S$. Since $v_1$ has no neighbours outside $F_1\cup F_2$, all  $d-1$ of its neighbours outside $F_1$ must be in $F_2$, which implies that $F$ and $F_2$ have the same affine hull, and hence are equal. But $w\in F\setminus F_2$, so this is impossible. Likewise  $F \cap F_2$ cannot be a subridge of the polytope either.

Next consider the possibility that $F \cap F_1$ and $F \cap F_2$ are both ridges of the polytope. For each vertex $v\in S$, there are two neighbours outside $F$ (one in each $F_i$), and so the excess degree of $v$ in $F$ is strictly less than its excess degree in $P$. 

Then the excess degree of $F$ is at most $2d-7-(d-2) = d-5$, which implies that $F$ is simple. However the degree of $u$ in $F$ is at least $(d-3)+1+1+1$, so $u$ is not simple in $F$. So this subcase does not arise either.

The remaining case is that $F$
omits one member of $S$. There are three subcases to consider, depending whether each $F\cap F_i$ is a subridge or a ridge of the polytope.

If both  $F\cap F_i$ are subridges of the polytope, then they contain $d-3$ members of $S$ and one member of each  $F_i\setminus S$. But then the degree of $v_1$ in $F$ is $(d-4)+1+1$, which is not enough. So this case does not arise.

If  $F\cap F_1$ is a subridge of the polytope, then $F$ is simple and $F\cap F_1\setminus S$ contains just one vertex. If $F\cap F_2$ is a ridge of the polytope, then $F\cap F_2\setminus S$ contains just 2 vertices. Since $u$ is adjacent to $w$, its degree in $F$ is at least $(d-4)+2+1+1$, meaning $u$ is not simple in $F$. So this case does not arise either.

If both $F\cap F_i$ are ridges of the polytope, then $F$ is the other facet determined by this ridge, which is determined uniquely by the vertex of $S$ omitted from $F$. (Since $F_i$ is simple, any $(d-4)$-face of $S$ will be contained in exactly three $(d-2)$-faces of $F_i$, of which two will contain the other vertex of $S$, and one will not.) There are only $d-3$ such possibilities.

In summary 
there are only $d-3$  facets containing $[u,w]$. This impossibility shows that $u$ cannot have excess degree more than 1.
\end{proof}

Let us say that a collection of edges of a polytope is {\it projectively parallel} if  the lines spanned by them are either all parallel or all concurrent at a single point.

\begin{lemma}\label{lem:simplex-facet}
 Let $P$ be a simple polytope with a simplex facet $F$. 
 \begin{enumerate}[(i)]
     \item If $P$ itself is not a simplex, then the vertices in $F$ all have different neighbours outside $F$.
     \item The edges outside $F$ but incident to the vertices of $F$ are all projectively parallel.
     \item If $F'$ is another simplex facet which meets $F$, then $P$ is a simplex.
 \end{enumerate}
    Consequently a simplicial polytope has no simple edges, unless it is a simplex.
\end{lemma}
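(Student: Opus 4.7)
The plan is to prove (ii) first; the distinctness of outside neighbours in (i) then drops out of projective parallelism together with the impossibility of three collinear vertices in a convex polytope, and (iii) is a short deduction from (i). For each $v_i \in F$, write $u_i$ for its unique neighbour outside $F$ (unique because $v_i$ is simple with $d-1$ neighbours inside the $(d-1)$-simplex $F$), and $\ell_i$ for the line $\aff\{v_i, u_i\}$.

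For (ii), fix distinct $i, j$. Simplicity of $P$ yields a unique 2-face $Q_{ij}$ whose two edges at $v_i$ are $[v_i, v_j]$ and $[v_i, u_i]$; the polygon $Q_{ij}$ contains $u_i \notin F$, so $Q_{ij} \cap F$ is a proper face of $Q_{ij}$, hence is exactly the edge $[v_i, v_j]$. The second edge of $Q_{ij}$ at $v_j$ therefore leaves $F$, and since $v_j$ has only one such edge, it is $[v_j, u_j]$. Thus $\ell_i, \ell_j \subset \aff Q_{ij}$ and meet at a single projective point. To promote pairwise meeting to common concurrence, for any triple $i, j, k$ I consider the unique 3-face $R$ at $v_i$ determined by $[v_i, v_j], [v_i, v_k], [v_i, u_i]$; symmetrising the previous argument at $v_j$ and $v_k$ places $Q_{ij}, Q_{ik}, Q_{jk}$ inside $R$, so $\ell_i, \ell_j, \ell_k$ all lie in $\aff R$. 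Three pairwise-meeting lines in a 3-space are either concurrent or coplanar, and coplanarity would force the common plane to meet $\aff F$ in a line containing $v_i, v_j, v_k$, contradicting the affine independence of these three simplex vertices. So the lines are concurrent, and since every pair of indices lies in some triple, all pairwise intersection points coincide in a single projective point through which every $\ell_r$ passes.

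For (i), I argue by contradiction: if $u_i = u_j = u$ for some $i \ne j$, then $\ell_i, \ell_j$ already meet at the finite point $u$, so (ii) forces every $\ell_r$ to pass through $u$, which means $u, v_r, u_r$ are collinear. In a convex polytope three distinct vertices cannot be collinear, and $u \ne v_r$ (different facets) while $v_r \ne u_r$ (they are adjacent); hence $u = u_r$ for every $r$. Then every vertex of $F$ is adjacent to $u$, and the $d+1$ simple vertices $v_1, \ldots, v_d, u$ have all their $d$ neighbours inside this set; by connectedness of the graph of $P$ these exhaust $V(P)$, so $P$ is a $d$-simplex.

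For (iii), two distinct simplex facets $F, F'$ of the simple polytope $P$ must meet in a ridge by \cref{basic}(iii), which is a $(d-2)$-simplex with $d-1$ common vertices. Writing $v'$ for the unique vertex of $F' \setminus F$, each vertex of $F \cap F'$ is simple in the simplex $F'$ and therefore adjacent to $v'$ there; since $v' \notin F$, the vertex $v'$ is the common outside-$F$ neighbour of at least $d-1 \ge 2$ vertices of $F$ (for $d \ge 3$), and (i) then forces $P$ to be a simplex. Finally, the consequence follows by polar duality: a simplicial polytope with an edge whose two endpoints are both simple dualises to a simple polytope with two simplex facets meeting in a ridge, which (iii) identifies as a simplex. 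The main obstacle in this plan is the projective-geometry step in (ii), specifically ruling out the coplanar configuration of $\ell_i, \ell_j, \ell_k$ via the affine independence of the simplex vertices $v_i, v_j, v_k$.
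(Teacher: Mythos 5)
Your proposal is correct, but it takes a genuinely different route to part (i). For part (ii) you follow essentially the paper's idea (use the 2- and 3-faces of the simple polytope $P$ spanned by the outgoing edges at vertices of the simplex $F$, then chain over triples); in fact you are more careful than the paper, which passes from ``pairwise coplanar'' to ``projectively parallel'' without excluding the coplanar-but-not-concurrent configuration of three lines --- a case you rule out via the affine independence of $v_i,v_j,v_k$ together with $u_i\notin\aff F$. The real divergence is in part (i): the paper proves it directly and independently of (ii), by observing that a repeated outside neighbour makes the set $V_0$ of outside neighbours of $F$ have fewer than $d$ elements while separating $F$ from the rest of the graph, so Balinski's theorem forces $f_0<2d$ and \cref{lem:simple} then identifies $P$ as a simplex. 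You instead deduce (i) from (ii): a repeated outside neighbour $u$ is a finite concurrence point of two of the lines $\ell_r$, hence of all of them, and since no three vertices of a polytope are collinear every $u_r$ must equal $u$, so that $\ver F\cup\{u\}$ is closed under adjacency and exhausts the graph. This buys a more elementary, self-contained argument --- no Balinski, no catalogue of simple polytopes with few vertices --- at the price of making (i) logically dependent on the full strength of (ii). Part (iii) and the concluding dual statement are handled exactly as in the paper (and, like the paper's proof, your argument for (iii) needs $d\ge3$, which you flag; the bare statement fails for a square).
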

\begin{proof}
(i) Suppose two vertices in $F$ have the same neighbour outside $F$. Then the collection $V_0$ of vertices outside $F$ which have a neighbour in $F$ has cardinality strictly less than $d$ and its removal disconnects $F$ from any vertex outside $F\cup V_0$. By Balinski's theorem,  $F\cup V_0$ must be empty, and so $P$ has strictly less than $2d$ vertices. By \cref{lem:simple}, $P$ is a simplex.

(ii) Consider a triangular 2-face $F_2$ of the simplex facet $F$. There is a unique 3-face $F_3$ containing $F_2$ that is not contained in $F$. Each two edges of $F_3$ incident to $F_2$ but outside $F_2$ will be coplanar, so these 3 edges are pairwise coplanar. Hence the lines containing these 3 edges are projectively parallel. This is the same for all the 2-faces of $F$. Considering a chain of triangular 2-faces in $F$ with each successive pair having a common edge,  we see that all such edges are  projectively parallel. 

(iii) By simplicity, the intersection $F\cap F'$ must be a ridge of the polytope. Then all members of this ridge have the same neighbour outside $F$. So (i) is applicable.

The final statement is just the dual of (iii).
\end{proof}

\begin{lemma}\label{structure2}
Let $P$ be a $d$-polytope with excess degree in  the range $[d-1,2d-7]$. Fix two distinct facets $F_0$ and $F_1$ whose intersection $S$ is a subridge of the polytope. Then 
\begin{enumerate}[(i)]
    \item All the nonsimple vertices lie in $F_0\cup F_1$, and they all have excess degree 1.
    \item Moreover if  $v$ is a nonsimple vertex in $F_0\setminus S$ (or in $F_1\setminus S$), then the convex hull of $v$ and $S$ is a simplex ridge of the polytope.
    \item If there are (at least) 2 nonsimple vertices in $F_0\setminus S$, then $F_0$ is a simplex.
\end{enumerate}
\end{lemma}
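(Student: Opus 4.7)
The plan is to exploit \cref{structure1} applied to $S=F_0\cap F_1$: it gives that $S$ is a $(d-3)$-simplex with $d-2$ vertices, each of excess $1$, having no neighbour outside $F_0\cup F_1$, and that both $F_0$ and $F_1$ are simple. Hence $S$ contributes exactly $d-2$ to the total excess, leaving a ``budget'' of at most $d-5$ for the remaining nonsimple vertices; that is, writing $T=\{u\notin S:\xi(u)\ge1\}$, we have $\sum_{u\in T}\xi(u)\le(2d-7)-(d-2)=d-5$.

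For part (i), to show every nonsimple vertex lies in $F_0\cup F_1$, I would assume for contradiction that some nonsimple $v\notin F_0\cup F_1$ exists with $\xi(v)=k$. By \cref{thm:excess}(ii), $v$'s nonsimple neighbours contribute at least $d-2-k$ to the excess; none of them can lie in $S$ (the vertices of $S$ have no neighbours outside $F_0\cup F_1$), so they all lie in $T\setminus\{v\}$, giving $\sum_T\xi\ge k+(d-2-k)=d-2>d-5$, a contradiction. To show each nonsimple vertex has excess exactly $1$: vertices of $S$ are handled by \cref{structure1}, while for a nonsimple $v\in F_0\setminus S$ (symmetrically $F_1\setminus S$), $v$ is simple in the simple facet $F_0$; \cref{vertex-simple-in-facet}(ii) then supplies a facet $F'$ with $F_0\cap F'$ a subridge $S'\ni v$, and \cref{structure1} applied to the pair $(F_0,F')$ forces $\xi(v)=1$.

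For part (ii), fix a nonsimple $v\in F_0\setminus S$ (so $\xi(v)=1$ by (i)). Let $a$ denote the number of neighbours of $v$ in $S$ and $b$ the total excess of $v$'s nonsimple neighbours in $T\setminus\{v\}$. Then \cref{thm:excess}(ii) gives $1+a+b\ge d-2$, while the budget gives $b\le d-6$; subtracting yields $a\ge3$. Inside the simple polytope $F_0$, the $(d-3)$-face $S$ sits in exactly two facets $R_1,R_2$ of $F_0$, both ridges of $P$. A direct count in the simple facet $R_i$ (each $w\in S$ has degree $d-2$ in $R_i$, of which $d-3$ lie in $S$) shows each $w\in S$ has precisely one neighbour in $R_1\setminus S$ and one in $R_2\setminus S$, and these two vertices exhaust its neighbours in $F_0\setminus S$. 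Since $v$ is adjacent in $F_0$ to $a\ge3$ vertices of $S$, $v$ must itself lie in one $R_i$, say $R_1$, and serve as the common $R_1$-neighbour of these vertices of $S$. Applying \cref{lem:simplex-facet}(i) to the simple polytope $R_1$ with simplex facet $S$: if $R_1$ were not a simplex, then distinct vertices of $S$ would have distinct neighbours outside $S$ in $R_1$, contradicting the shared neighbour $v$. Hence $R_1$ is a $(d-2)$-simplex; as $|R_1|=d-1$ and $|S|=d-2$, we have $R_1\setminus S=\{v\}$, so $R_1=\mathrm{conv}(\{v\}\cup S)$ is the required simplex ridge of $P$.

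For part (iii), take distinct nonsimple $v_1,v_2\in F_0\setminus S$. By (ii), each $\mathrm{conv}(v_i,S)$ is a simplex ridge of $P$ lying inside $F_0$, hence a facet of $F_0$ containing $S$. The simple polytope $F_0$ has exactly two such facets, namely $R_1$ and $R_2$, so $\mathrm{conv}(v_1,S)$ and $\mathrm{conv}(v_2,S)$ are precisely these two ridges. Thus $F_0$ has two simplex facets meeting at $S$, and \cref{lem:simplex-facet}(iii) applied to the simple polytope $F_0$ forces $F_0$ to be a simplex. The main obstacle I anticipate is the vertex-figure count underlying the claim that every $w\in S$ has exactly one neighbour in each of $R_1\setminus S$ and $R_2\setminus S$; this is precisely the bridge that converts the combinatorial inequality $a\ge3$ into the geometric simplex-ridge conclusion via \cref{lem:simplex-facet}.
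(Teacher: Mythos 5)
Your proof is correct and follows essentially the same route as the paper: the excess-budget argument for (i), the count forcing at least three neighbours in $S$ followed by \cref{lem:simplex-facet}(i) for (ii), and the two-simplex-ridges observation for (iii). The only differences are cosmetic — you helpfully spell out the $R_1,R_2$ bookkeeping that the paper compresses into a one-line citation of \cref{lem:simplex-facet}(i), and in (iii) you invoke \cref{lem:simplex-facet}(iii) where the paper applies Balinski's theorem directly (the former being proved by the latter anyway).
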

\begin{proof} 
(i) If there were a nonsimple vertex outside $F_0\cup F_1$, it could not (by \cref{structure1})
be adjacent to any vertices in $S$. By \cref{thm:excess}(ii) the total excess degree of $P$ would be at least $d-2 + d- 2 = 2d - 4$. So every nonsimple vertex belongs to  $F_0\cup F_1$.

(ii) Now let $v$ be a nonsimple vertex in $F_0\setminus S$. By \cref{vertex-simple-in-facet}(ii) and \cref{structure1}, $v$ has excess degree exactly 1, and so is adjacent to at least $d-3$ nonsimple vertices. Outside $S$, there are at most $2d-7-(d-2)=d-5$ nonsimple vertices, so $v$ is adjacent to at least 3 vertices in $S$. By \cref{lem:simplex-facet}(i), the convex hull of $v$ and $S$ is a simplex ridge of the polytope.

(iii) In the case that there are two such nonsimple vertices in $F_0\setminus S$, their removal would disconnect $S$ from any other vertices in $F_0$. By Balinski's theorem, there cannot be any other vertices, so $F_0$ is a simplex facet.
\end{proof} 

When the excess degree is in the range $[d+3, 2d-7]$, \cref{notvertex} says the intersection of two facets is not a single vertex; if the intersection is a subridge of the polytope, \cref{structure2} ensures that these two facets are simplices and since all the nonsimple vertices are in the union of these facets, there are just not enough nonsimple vertices; and the remaining possibility is that the polytope is semisimple, but \cref{thm:semisimplexcess} says this is not possible either. So we have the following theorem.

\begin{theorem}\label{thm-d+3to2d-7}
   There is no $d$-polytope with excess degree in the range $[d+3, 2d-7]$. 
\end{theorem}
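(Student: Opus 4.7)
The plan is to exploit the classification of how two facets of $P$ can intersect, given by \cref{basic}: any two nondisjoint facets meet in a face of dimension $0$, $d-3$, or $d-2$. I would treat these three possibilities in turn. Before starting, I observe that $[d+3,2d-7]$ is empty for $d\le 9$, so I may assume $d\ge 10$; and since $\xi(P)\ge d+3>0$, $P$ is not simple.

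First, the case where some two facets meet in a single vertex is immediately ruled out by \cref{notvertex}, because $[d+3,2d-7]\subset[d-1,2d-6]$.

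Second, suppose every two nondisjoint facets meet in a ridge, i.e.\ $P$ is semisimple. By \cref{thm:semisimplexcess}(iv), if $P$ has a unique nonsimple vertex then $\xi(P)\in\{2d-6\}\cup[3d-12,\infty)$, and both options exceed $2d-7$ for $d\ge 10$. By \cref{thm:semisimplexcess}(v), if $P$ has two or more nonsimple vertices then $\xi(P)\ge 4d-16>2d-7$. Either way, this contradicts $\xi(P)\le 2d-7$.

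Third, the remaining case is that there exist facets $F_0,F_1$ whose intersection $S$ is a subridge of $P$. By \cref{structure1}, $S$ is a simplex on $d-2$ vertices, each of excess degree exactly $1$ in $P$; and by \cref{structure2}(i), every nonsimple vertex of $P$ lies in $F_0\cup F_1$ and has excess degree $1$. Writing $a$ and $b$ for the number of nonsimple vertices in $F_0\setminus S$ and $F_1\setminus S$, we get $\xi(P)=(d-2)+a+b$, so the hypothesis $\xi(P)\ge d+3$ forces $a+b\ge 5$. But by \cref{structure2}(iii), if $a\ge 2$ then $F_0$ is a simplex, whence $|F_0\setminus S|=2$ and $a\le 2$; symmetrically $b\le 2$. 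Hence $a+b\le 4$, a contradiction.

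The main obstacle is the tight numerical bookkeeping in the third case: one must see that the simplex structure imposed on $F_0$ and $F_1$ by \cref{structure2}(iii) is incompatible with the surplus of nonsimple vertices forced by the lower bound $d+3$ on $\xi(P)$. Everything else is a direct appeal to earlier results.
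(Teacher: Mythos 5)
Your proposal is correct and follows essentially the same route as the paper: rule out point intersections via \cref{notvertex}, rule out the semisimple case via \cref{thm:semisimplexcess}, and in the subridge case use \cref{structure1} and \cref{structure2} to show there are too few nonsimple vertices. You merely make explicit the count ($\xi(P)=(d-2)+a+b$ with $a,b\le 2$ forced by the simplex facets) that the paper leaves as ``there are just not enough nonsimple vertices.''
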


The corresponding result for excess degree $d+2$ is obtained similarly, under the assumption $d+2\le2d-7$.

\begin{theorem}\label{thm-d+2}
   Let $P$ be a $d$-polytope with excess degree $d+2$, with $d\ge 9$. Then $P$ is a 2-neighbourly polytope with $d+2$ vertices.
   \begin{proof}
       It suffices to show that in the case that the intersection of two facets $F_1\cap F_2=S$ is a subridge of the polytope, then all nonsimple vertices are pairwise adjacent. We can assume that $v_1, v_2 \in F_1$, $v_3, v_4 \in F_2$  are the nonsimple vertices outside this subridge. Suppose to the contrary that $v_i$ $(i=1,2,3,4)$ is adjacent to a vertex $w\notin F_1\cup F_2$, then the removal of $v_1, v_2, v_3, v_4$ will disconnect the graph of $P$ (since vertices in $S$ are only adjacent to vertices in $S$ and $v_i$). And now we have $d+2$ nonsimple vertices that are pairwise adjacent, one can see that there are no other vertices of the polytope.
   \end{proof}
\end{theorem}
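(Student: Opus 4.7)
My plan is to combine the structural results from Section 3 with the subridge analysis of \cref{notvertex,structure1,structure2}. Since $\xi(P) = d+2 > 0$, $P$ is nonsimple. First I would rule out that $P$ is semisimple: \cref{thm:semisimplexcess} asserts that any nonsimple semisimple polytope has excess at least $\min\{2d-6,\, 3d-12,\, 4d-16\}$ (depending on the number of nonsimple vertices), and each of these quantities strictly exceeds $d+2$ when $d \ge 9$. So $P$ cannot be semisimple, and by \cref{basic} together with \cref{notvertex} there exist two facets $F_1, F_2$ whose intersection $S$ is a $(d-3)$-face.

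From \cref{structure1}, $S$ is a $(d-3)$-simplex whose $d-2$ vertices each have excess degree $1$, and by \cref{structure2} all nonsimple vertices of $P$ lie in $F_1 \cup F_2$ and have excess degree $1$. Since $\xi(P) = d+2$, there are exactly $d+2$ nonsimple vertices, so $(F_1 \cup F_2) \setminus S$ contains exactly $4$. \cref{structure2}(iii) implies that if $F_i$ contains at least $2$ nonsimple vertices in $F_i \setminus S$, then $F_i$ is a simplex, forcing $|F_i \setminus S| = 2$. Hence each $F_i$ has at most $2$ nonsimple vertices in $F_i \setminus S$; combined with the total of $4$, both $F_1$ and $F_2$ must be simplices, each containing exactly $2$ such vertices. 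Label them $v_1, v_2 \in F_1 \setminus S$ and $v_3, v_4 \in F_2 \setminus S$.

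Next, I would invoke Balinski's theorem to show $V(P) = S \cup \{v_1, v_2, v_3, v_4\}$. By \cref{structure1}, no vertex of $S$ has a neighbour outside $F_1 \cup F_2$, so any vertex $w \notin F_1 \cup F_2$ would be separated from $S$ upon removing $\{v_1, v_2, v_3, v_4\}$. Since $|S| = d-2 \ge 7$, this would leave two nonempty components, contradicting the $d$-connectedness of the graph of $P$ ($d \ge 9 > 4$). Consequently $f_0(P) = (d-2) + 4 = d+2$.

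Finally, I would verify $2$-neighbourliness by counting adjacencies. Pairs within $S$ are adjacent since $S$ is a simplex; each $v_i$ is adjacent to every vertex of $S$ because $F_i$ is a simplex containing $S \cup \{v_i\}$; and $v_1, v_2$ (respectively $v_3, v_4$) are adjacent within the simplex $F_1$ (respectively $F_2$). For the cross pairs, each $v_i$ has degree $d+1$ with exactly $d-1$ neighbours inside $F_i$, so its remaining $2$ neighbours must lie outside $F_i$; since no vertex lies outside $F_1 \cup F_2$, they must be the two vertices of $F_j \setminus S$ for $j \ne i$. Hence every pair of vertices of $P$ is adjacent, so $P$ is a $2$-neighbourly $d$-polytope with $d+2$ vertices. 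I expect the main hurdle to be the distribution argument that forces both $F_1$ and $F_2$ to be simplices each harbouring exactly two extra nonsimple vertices; the subsequent Balinski step and degree-counting are then fairly mechanical.
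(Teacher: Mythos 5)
Your proof is correct and follows essentially the same route as the paper's: reduce to the case where two facets meet in a subridge, apply the structural lemmas of \S4 to locate the $d+2$ nonsimple vertices of excess degree~1, and use a Balinski-type connectivity argument to exclude any vertex outside $F_1\cup F_2$. You fill in several details the paper leaves implicit (ruling out semisimplicity explicitly, the $2{+}2$ distribution forcing both facets to be simplices, and the final adjacency count), but the underlying argument is the same.
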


For $d=3,4,5,6$ or 8, examples with excess degree $d+2$ and more than $d+2$ vertices are easily found by considering multifold pyramids over suitable 3-polytopes with 6 or 7 vertices \cite[p. 62]{Ewa96}. For $d=7$, $M(4,3)$ is such an example. All of these have some simple vertices, so truncation yields examples with arbitrarily large values for $f_0$.


\section{Polytopes with excess degree $d-1$ or $d+1$}\label{excessdpm1}

Polytopes with excess degree $d-1$ and those with excess degree $d+1$ have one feature in common: they only exist in low dimensions. This was already known in the case $d-1$ \cite[Thm. 4.18]{PinUgoYosEXC}; here we present a new proof, building on the work in the previous sections. Note that in both cases $d$ must be odd.

\begin{theorem}[{\cite[Thm.~4.18]{PinUgoYosEXC}}]\label{deeminusone} If there is a $d$-polytope with excess degree $d-1$, then $d=3$ or 5.
\end{theorem}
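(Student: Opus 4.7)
The plan is to split into two cases based on the parity of $d$. The easy case, $d$ even, follows immediately from Remark~\ref{even}: $\xi(P)$ must be even, but $d-1$ is odd, so no such polytope exists. Hence the substantive task is to rule out odd $d\ge 7$, which combined with parity leaves only $d=3$ and $d=5$.

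So assume $d\ge 7$ is odd and that $P$ is a $d$-polytope with $\xi(P)=d-1$. The first step I would carry out is to show that $P$ must be semisimple. Since $d-1<2d-6$ for $d\ge 6$, Lemma~\ref{basic}(iv) forbids any two facets of $P$ from intersecting in a face of dimension $j\in[1,d-3]$. Lemma~\ref{notvertex} applies because $d-1\in[d-1,2d-6]$, and rules out intersection in a single vertex. Hence every pair of non-disjoint facets meets in a ridge, which is precisely the semisimple condition.

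The second step is to invoke the strong excess bounds for semisimple polytopes given in Theorem~\ref{thm:semisimplexcess}. Since $\xi(P)=d-1>0$, $P$ has at least one nonsimple vertex. If there is exactly one, then Theorem~\ref{thm:semisimplexcess}(iv) forces $\xi(P)\in\{2d-6\}\cup[3d-12,\infty)$; but for $d\ge 7$ both $2d-6$ and $3d-12$ exceed $d-1$, a contradiction. If instead there are two or more nonsimple vertices, then Theorem~\ref{thm:semisimplexcess}(v) gives $\xi(P)\ge 4d-16$, so $d-1\ge 4d-16$, which forces $d\le 5$, again contradicting $d\ge 7$.

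I do not anticipate any serious obstacle here: the heavy lifting has already been done in Sections~\ref{semisimple} and~\ref{structure}, and the proof reduces to checking that $\xi(P)=d-1$ fits the numerical ranges required by Lemma~\ref{notvertex} and Theorem~\ref{thm:semisimplexcess}. The only subtlety is confirming that $d-1$ is strictly less than each of the thresholds $2d-6$, $3d-12$, and $4d-16$ when $d\ge 7$, which is an elementary verification.
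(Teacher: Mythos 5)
There is a genuine gap, and it is fatal: your Step 1 (that $P$ must be semisimple) is false, and the conclusion you draw from \cref{basic}(iv) is not the one the paper intends or uses. The case $j=d-3$ of \cref{basic} gives only $(d-2-j)(j+1)=d-2$ by part (ii), not $2d-6$; the $2d-6$ bound is the minimum of $(d-2-j)(j+1)$ over $j\in[1,d-4]$ only (for instance, in $M(2,d-2)$ two simplex facets meet in a $(d-3)$-face while $\xi=d-2<2d-6$). This is exactly how the paper reads it: right after \cref{basic} it states that excess below $2d-6$ forces any two nondisjoint facets to meet in a face of dimension $0$, $d-3$ \emph{or} $d-2$, and the whole of \cref{structure} (\cref{structure1}, \cref{structure2}) is devoted to the surviving subridge case for excess in $[d-1,2d-7]$. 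A quick sanity check confirms something is wrong with your argument: if it were sound, the identical reasoning would apply verbatim to excess degree $d$ or $d+1$ and would ``prove'' that no such polytopes exist for $d\ge7$, contradicting \cref{thm:excess-d} and the stacked-simplex examples.

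Your Step 2 is essentially the paper's \emph{first} step, but run in the opposite direction: the paper uses \cref{thm:semisimplexcess} to conclude that $P$ \emph{cannot} be semisimple, whence two facets must meet in a subridge. The substantive work then begins where your proof stops: \cref{structure2}(ii) produces a simplex ridge $R$ whose vertices are precisely the nonsimple vertices, each with three neighbours outside $R$; truncating $R$ yields a simple $(d-1)$-facet with $3(d-1)$ vertices, so \cref{lem:simple} forces $d-1\in\{2,4,8\}$; and the residual case $d=9$ is eliminated by a separate argument showing the truncation facet would have to be $\Delta(3,5)$, which is super-Kirkman and so cannot contain the two disjoint $7$-faces that the configuration would require. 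None of this is recoverable from your outline, so the proof as proposed does not go through. (Your parity observation disposing of even $d$ is correct and matches the paper.)
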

\begin{proof}
Let $P$ be such a polytope, and suppose that $d\ge7$. 

Thanks to \cref{semisimple}, $P$ cannot be semisimple. \cref{structure2}(ii) then applies, giving us a simplex ridge $R$, whose vertices are precisely the nonsimple vertices of $P$. Each vertex of $R$ has three neighbours outside. So truncating $R$ will give us a new facet which is simple and has $3(d-1)$ vertices. According to \cref{lem:simple}, this is only possible if $d-1=2,4$ or 8, and when $d-1=8$, this facet must be $\Delta(3,5)$.

So we just need to rule out the case $d=9$. Let $F_1$ and $F_2$ be the two facets intersecting in a $(d-3)$-face $S$, without loss of generality suppose $S\subset R\subset F_2$, and denote by $H$ a hyperplane separating $R$ from the other vertices of $P$. Let $H^+$ be the closed half-space containing $R$, $P'=H^+\cap P$, $F=H\cap P$, $F_1'=F_1\cap H$, $F_2'=F_2\cap H$. Then $F$, $F_1'$ and $F_2'$ are facets of $P'$.  Since $F$ contains only simple vertices, each $F_i'$ must intersect $F$ in a ridge of the polytope. But $F_1'\cap F_2'=\emptyset$ because $F_1\cap F_2=S$ is disjoint from $H$. On the other hand, $\Delta(3,5)$ is super-Kirkman, and cannot have two disjoint 7-faces, contradicting our assumption.
\end{proof}

For a 5-polytope with excess degree 4, if turns out that all nonsimple vertices have the same degree, and they form a face. For a 3-polytope with excess degree 2, it is trivial that  all nonsimple vertices have the same degree, but numerous examples show that they need not form a face. See \cite[Section 4]{PinUgoYosEXC} for details.

\begin{theorem}\label{deeplusone} If there is a $d$-polytope with excess degree $d+1$, then $d=3, 5$ or 7.
\begin{proof}
We will suppose that $d\ge 8$, and reach a contradiction. Since $d-1\le2d-7$,  all the results in  \cref{structure} hold. In particular, we may assume that there are two facets whose intersection is a $(d-3)$-face $S$, $S=F_1 \cap F_2$, that there are exactly 3 nonsimple vertices $v_1, v_2, v_3 \notin S$, that  they each have excess degree exactly 1, and either $F_1$ or $F_2$ is a simplex. Without loss of generality, let $F_2$ be a simplex facet, $v_1 \in F_1$, and $v_2, v_3 \in F_2$. 
		
Next, we show that all the nonsimple vertices are pairwise adjacent, and for each of them, (the lines containing) the unique edges incident to them that are incident to a simple vertex are all projectively parallel. 

Since $v_2$ is a nonsimple vertex in the simple facet $F_2$, $v_2$ is contained in a subridge (of the polytope) which is an intersection of $F_2$ and another facet $G$. But $G$ cannot contain both $v_2$ and $v_3$ (because there is no other nonsimple vertex in $F_1$ other than $v_1$ and those in $S$). So this subridge contains $v_2$ and all the vertices of $S$ except one, say $u$. And $G$ contains $v_1$ and all of its neighbours in $F_1$ except the one that is missing in $S$. Notice that $G$ is simple, so $\conv\{ v_1, v_2, S\setminus u  \}$ is a simplex ridge of the polytope. Hence, $v_1$ is adjacent to $v_2$, and by \cref{structure1}, for $v_1, v_2, S\setminus u_i$, the lines containing the unique edges incident to a given simple vertex are all projectively parallel. A similar argument holds for $v_3$.

Now we have $d+1$ nonsimple vertices which are pairwise adjacent, and for each of them, there is an unique edge incident to them that is incident to a simple vertex and all the lines containing these $d+1$ edges are projectively parallel. 
		
Since $F_2$ is a simplex facet, the $d$ parallel edges toward the same direction, and so are the edges incident to the simplex ridge of the polytope. With respect to projective equivalence, $P$ has a line segment for a summand and then we can take a cross section $H \cap P$ of $P$ such that $H$ passes through these $d+1$ projectively parallel edges without passing through any other vertices of $P$. By \cref{simpletruncation}, $H \cap P$ is a simple $(d-1)$-polytope with $d+1$ vertices. This is not possible unless $d-1\le2$.
\end{proof}
\end{theorem}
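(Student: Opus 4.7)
The plan is to assume for contradiction that $P$ is a $d$-polytope with excess degree $d+1$ for some $d \geq 8$. By \cref{even}, $\xi(P)=d+1$ forces $d$ to be odd, so we may assume $d \geq 9$. Because $d+1 \leq 2d-7$ whenever $d \geq 8$, every structural result of \cref{structure} is applicable. Since $d+1 < 2d-6$, \cref{thm:semisimplexcess} rules out $P$ being semisimple; hence two facets $F_1, F_2$ meet in a face whose dimension is neither $d-2$ nor (by \cref{notvertex}) $0$. \cref{basic} then forces $F_1 \cap F_2 = S$ to be a $(d-3)$-subridge of the polytope.

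Next I would invoke \cref{structure1} and \cref{structure2}. The former gives that $S$ is a $(d-3)$-simplex whose $d-2$ vertices each have excess exactly $1$, accounting for excess $d-2$. The remaining excess $3$ must be supplied by three further vertices $v_1, v_2, v_3$, each of excess $1$, lying in $(F_1 \cup F_2)\setminus S$. By pigeonhole one of the facets, say $F_2$, contains two of these, and \cref{structure2}(iii) forces $F_2$ to be a simplex facet; place $v_2, v_3 \in F_2$ and $v_1 \in F_1$. \cref{structure2}(ii) gives that $\mathrm{conv}(v_i, S)$ is a simplex ridge for each $i$, so each $v_i$ is adjacent to every vertex of $S$.

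The heart of the argument is to show that the $d+1$ nonsimple vertices (the $d-2$ vertices of $S$ together with $v_1, v_2, v_3$) are pairwise adjacent, and that the unique edge from each of them to a simple vertex yields a family of $d+1$ projectively parallel edges. To obtain $v_1 \sim v_2$, apply \cref{vertex-simple-in-facet}(ii) to $v_2$ inside the simple facet $F_2$: there is a facet $G \neq F_2$ through $v_2$ with $G\cap F_2$ a subridge omitting exactly one vertex of $F_2$. That omitted vertex cannot be $v_3$, for otherwise $G$ together with $F_1$ would demand a nonsimple vertex in $F_1 \setminus (S\cup\{v_1\})$ via \cref{structure2}, contrary to our count. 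So $G$ omits some $u \in S$ and hence contains $v_1$ together with most of $S$; a second application of \cref{structure2}(ii) to the pair $(F_1, G)$ yields a simplex ridge containing both $v_1$ and $v_2$. The symmetric argument gives $v_1 \sim v_3$, and $v_2 \sim v_3$ because $F_2$ is a simplex. The projective parallelism of the $d+1$ simple-endpoint edges then follows from \cref{lem:simplex-facet}(ii) applied to $F_2$ and to each of the simplex ridges just produced.

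Finally, projective parallelism permits choosing a hyperplane $H$ that meets precisely these $d+1$ edges and no vertex. By \cref{simpletruncation}, $H\cap P$ is a $(d-1)$-polytope with exactly $d+1$ vertices, each of which is a simplex vertex because each underlying edge has a simple endpoint. Thus $H\cap P$ is a simple $(d-1)$-polytope with $f_0 = d+1$, but \cref{lem:simple}(ii) demands $f_0 = d$ or $f_0 \geq 2d-2$ for simple $(d-1)$-polytopes, impossible for $d \geq 4$. The main obstacle I expect is the bookkeeping in the pairwise-adjacency step: correctly identifying the auxiliary facet $G$, verifying it contains the expected subset of $S$ and of $\{v_1,v_2,v_3\}$, and ensuring the hypotheses of \cref{structure2}(ii) are satisfied require careful tracking of which nonsimple vertices sit on which facets.
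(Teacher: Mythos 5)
Your overall strategy coincides with the paper's: rule out semisimplicity, locate a subridge $S=F_1\cap F_2$ whose $d-2$ vertices each have excess $1$, distribute the remaining three nonsimple vertices as $2$--$1$ between the facets with the doubly-occupied facet a simplex, prove the $d+1$ nonsimple vertices are pairwise adjacent with projectively parallel ``extra'' edges, and cut with a hyperplane through those edges to produce an impossible simple $(d-1)$-polytope with $d+1$ vertices. The set-up and the endgame are both correct (and your appeal to \cref{even} to get $d$ odd, while unnecessary, is harmless).

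The genuine gap is in the adjacency step, exactly where you predicted trouble. First, $G\cap F_2$ is a $(d-3)$-face of the $(d-1)$-simplex $F_2$, hence has $d-2$ of its $d$ vertices and omits exactly \emph{two} of them, not one; your case analysis is built on the wrong count. Second, your conclusion about $v_3$ is backwards: the correct statement (and the one the paper proves) is that $G\cap F_2$ \emph{cannot} contain both $v_2$ and $v_3$, so the two omitted vertices are $v_3$ and some $u\in S$, giving $G\cap F_2=\conv\{v_2, S\setminus u\}$; you instead assert $v_3\in G\cap F_2$, supported only by a vague appeal to ``$G$ together with $F_1$.'' Third, your derivation of $v_1\sim v_2$ applies \cref{structure2}(ii) to the pair $(F_1,G)$, but that lemma requires $F_1\cap G$ to be a subridge of $P$ (and requires $v_1\in F_1\cap G$), neither of which you establish --- $F_1\cap G$ could perfectly well be a ridge. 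The paper's route is to apply \cref{structure2} to the pair $(F_2,G)$: part (i) forces $v_1\in G$, and part (ii), applied to the nonsimple vertex $v_1\in G\setminus(G\cap F_2)$, makes $\conv\{v_1,v_2,S\setminus u\}$ a simplex ridge, whence $v_1\sim v_2$. A smaller but real issue of the same kind occurs in the parallelism step: \cref{lem:simplex-facet}(ii) assumes the ambient polytope is simple, so it cannot be applied to $F_2$ as a facet of the nonsimple $P$; it must be applied \emph{inside} the simple facets $F_1$, $F_2$, $G$, \dots to the simplex ridges they contain, and the resulting parallel classes then chained together.
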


There is no 7-polytope with 14 vertices and excess degree 8; details of this will appear elsewhere. Apart from this exception, there are $d$-polytopes with excess degree $d+1$ and $f_0$
being any even number from $d+3$ onwards, for $d=$ 3, 5 or 7. Examples are not hard to construct. (For $d=7$, $M(3,4)$ has 10 vertices, truncating an edge gives an example with 20 vertices and $M(5,2)$ has 12 vertices. Repeatedly truncating simple vertices gives examples with excess degree 8 and all possible values of $f_0$.)


\section{Polytopes with excess degree $d$}\label{excessd}

We have seen that having excess degree $d-2$ imposes strong restrictions on the structure of a $d$-polytope \cite[Thm. 4.10]{PinUgoYosEXC}; in particular, all nonsimple vertices have the same degree.  
In this section, we show that similar restrictions apply to polytopes with excess degree $d$, provided $d\ge5$. For $d=3$ or 4, this is not true; it is easy to find examples showing that 3-polytopes with excess degree 3 and 4-polytopes with excess degree 4 suffer no restrictions on the excess degrees of individual vertices. Once again, the behaviour of low-dimensional polytopes is a poor guide to what happens in general.

Suppose that $P_1$ and $P_2$ are two $d$-polytopes lying on different sides of a common supporting hyperplane, and that their intersection  $F=P_1\cap P_2$ is a facet of both. If in addition their union $P=P_1\cup P_2$ is convex, then $P$ is a $d$-polytope. If  every edge of $F$ is also an edge in $P$, we will call $P$ a {\it graph-connected sum} of $P_1$ and $P_2$, along $F$. More generally, we will call a polytope $F$ a {\it phantom face} of $P$ if it is not a face of $P$, but the vertices of $F$ are all vertices of $P$ and the edges of $F$ are precisely the edges of $P$ whose vertices are both in $F$.

In case $P_1$  and $P_2$ are both simple and $F$ is a simplex, it is easy to see that $P$ has excess degree $d$. We will see that for $d\ge7$, all examples with excess degree $d$ arise in this way. There is a little more variety if $d=5$ or 6, and total chaos for $d=3$ or 4.

A {\it missing face} of a polytope was defined in \cite[Section 2]{AltPer80} as a subset $M$ of the vertices which is not the vertex set of any face, but such that every proper subset of $M$ is the vertex set of a face. The concept of phantom face is more general than the missing face. For example, when $d=4$, a pyramid over a triangular bipyramid is a graph-connected sum, and the convex hull of the apex of the pyramid and the missing triangle in the bipyramid is a phantom face. However it is not a missing face as defined in \cite{AltPer80}.

Likewise graph-connected sum is more general than the connected sum defined in  \cite[p 149]{Eck06}, \cite[p 85]{PinBook},  \cite[Section 3.2]{Ric96}, \cite[p 274]{Zie95} and elsewhere. 
Both can be loosely described as gluing the polytopes $P_1$ and $P_2$ along the common facet $F$; however each definition of the connected sum in these references implicitly or explicitly requires that $F$ be a missing face of $P$. This is too restrictive for our purpose, and would deprive us of the most interesting examples. For example, every $d$-polytope with $d+2$ vertices and excess degree $d$, of which there are $d-2$ combinatorial types \cite[Section 6.1]{Gru03}, can be described as the graph-connected sum of two simplices. Only one of them, the bipyramid over a simplex, is a connected sum.

\begin{lemma}\label{lem:excess-d-part-2}
Let $P$ be a $d$-polytope with excess degree $d$, where $d\ge7$. Then
\begin{enumerate}[(i)]
\item The nonsimple vertices do not form a facet.
\item There exist two simple facets $F_0,F_1$ whose intersection $S$ is a simplex subridge of the polytope, and two more nonsimple vertices $x_0\in F_0\setminus S$ and $x_1\in F_1\setminus S$ such that the convex hulls of $S\cup\{x_0\}$ and of $S\cup\{x_1\}$ are both ridges of $P$.
\item The $d$ nonsimple vertices are pairwise adjacent. Thus every subset of $S$ is the vertex set of a simplex (which may or may not be a face of $P$).
\end{enumerate}
\end{lemma}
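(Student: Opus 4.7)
The plan is to exploit the structural results of \cref{structure}, which all apply since $d$ lies in the range $[d-1,2d-7]$ for $d\ge 7$. First observe that $P$ cannot be semisimple, since \cref{thm:semisimplexcess} would then force $\xi(P)\ge 2d-6>d$. Hence some pair of facets meets in a subridge $S=F_0\cap F_1$, which by \cref{structure1} is a simplex on $d-2$ vertices each of excess exactly $1$, with both $F_0$ and $F_1$ simple. Since $\xi(P)=d$, the remaining two units of excess arise from exactly two additional nonsimple vertices, each of excess $1$, and by \cref{structure2}(i) both lie inside $F_0\cup F_1$.

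For (i), I argue by contradiction. Suppose all $d$ nonsimple vertices form a facet $F$; since $F$ is a $(d-1)$-polytope with only $d$ vertices, $F$ is a $(d-1)$-simplex, so each of its vertices has degree $d-1$ in $F$ and $d+1$ in $P$, leaving exactly two outside neighbours, all of which are automatically simple. Truncating with a hyperplane $H$ separating $F$ from the other vertices, \cref{simpletruncation} yields a $(d-1)$-polytope $H\cap P$ with $2d$ vertices, each a simplex vertex, hence $H\cap P$ is simple. But \cref{f-zero-simple} restricts the vertex count of a simple $(d-1)$-polytope to $\{d,2d-2,3d-6,3d-4\}$ or else $\ge 4d-12$, and for $d\ge 7$ the value $2d$ matches none of these alternatives. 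This proves (i). Part (ii) now follows immediately: \cref{structure2}(iii) says that if both extra nonsimple vertices lay in the same $F_i\setminus S$, then $F_i$ would be a $(d-1)$-simplex on $d$ all-nonsimple vertices, contradicting (i). Hence one extra nonsimple vertex $x_0$ sits in $F_0\setminus S$ and the other $x_1$ in $F_1\setminus S$, and \cref{structure2}(ii) supplies the simplex ridges $\conv(S\cup\{x_0\})$ and $\conv(S\cup\{x_1\})$.

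For (iii), pairwise adjacency inside $S$ is automatic and each $x_i$ is adjacent to every vertex of $S$ via the ridges just produced; the only missing edge is $[x_0,x_1]$. Apply \cref{vertex-simple-in-facet}(ii) to $x_0$, which is nonsimple in $P$ but simple in the simple facet $F_0$, to obtain a second facet $G\ni x_0$ with $F_0\cap G$ a subridge of $P$. By \cref{structure1} this subridge consists of $d-2$ nonsimple vertices of $F_0$; since the nonsimple vertices of $F_0$ are exactly $S\cup\{x_0\}$ and $x_0\in F_0\cap G$, we must have $F_0\cap G=(S\setminus\{v\})\cup\{x_0\}$ for some $v\in S$. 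Now applying \cref{structure2}(i) to the pair $(F_0,G)$ places every nonsimple vertex of $P$ in $F_0\cup G$; since $x_1\notin F_0$ this forces $x_1\in G\setminus(F_0\cap G)$, whereupon \cref{structure2}(ii) shows $\conv((F_0\cap G)\cup\{x_1\})$ is a simplex ridge containing the edge $[x_0,x_1]$. Finally, $S\cup\{x_0,x_1\}$ is affinely independent because $x_1\notin\aff(F_0)\supseteq\aff(S\cup\{x_0\})$, so the $d$ nonsimple vertices form the vertex set of a geometric $(d-1)$-simplex, and every subset of them is the vertex set of a subsimplex. The main obstacle I anticipate is part (iii): one must correctly identify the auxiliary facet $G$ via \cref{vertex-simple-in-facet} and then redeploy \cref{structure2} on the new pair $(F_0,G)$ to relocate $x_1$ next to $x_0$; the other parts reduce to direct application of the machinery from \cref{structure}.
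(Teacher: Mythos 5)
Your proposal is correct and follows essentially the same route as the paper: the same truncation argument for (i) (the paper cites \cref{lem:simple} where you cite \cref{f-zero-simple}, but the conclusion is identical), the same use of \cref{structure2}(iii) plus (i) for (ii), and the same construction of an auxiliary facet $G$ via \cref{vertex-simple-in-facet}(ii) together with \cref{structure1} and \cref{structure2} to produce the edge $[x_0,x_1]$ in (iii). Your treatment is, if anything, slightly more explicit than the paper's in locating $x_1$ inside $G$ and in verifying the affine independence of the $d$ nonsimple vertices.
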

\begin{proof} (i) Suppose $F$ is a facet containing all and only the nonsimple vertices. Then $S$ has $d$ vertices, each of which has two neighbours outside $F$, both of them simple. If we truncate $F$, the new polytope will have a simple facet with $2d=2(d-1)+2$ vertices. By \cref{lem:simple}, this is only possible if $2(d-1)+2\ge3(d-1)-3$, i.e. if $d\le6$.

(ii) The existence of $F_0,F_1$,  $x_0$ and $x_1$ follows from \cref{structure2}; we only need to show that $x_0$ and $x_1$ cannot both lie in the same facet. But if they were both in say $F_0$, it would be a simplex facet containing all and only the nonsimple vertices, which is impossible according to part (i).

(iii) We may label the nonsimple vertices of $P$ as $x_0,\ldots,x_{d-1}$ in such a way that  the vertices of $S$ are $x_2,\ldots,x_{d-1}$. By \cref{structure1},  $\{x_0,x_2,\ldots,x_{d-1}\}$ and $\{x_1, x_2,\ldots,x_{d-1}\}$ are the vertex sets of two simplex ridges of $P$. Every subset of these two sets thus generates a simplex face.

\cref{vertex-simple-in-facet}(ii) tells us that there is another facet $F'$ containing $x_0$ such that $F_0\cap F'$ is a subridge $S'$ of $P$. \cref{lem:simplex-facet} informs us that $F'$ contains all but one of the nonsimple vertices, say $x_i$. Clearly $i\ne0$. \cref{structure2}(ii),  $S'$ and $x_i$ generate a simplex ridge of $P$ contained in $F'$. The unique nonsimple vertex in $F_0\setminus F'$ cannot be $x_1$, so $x_0$ and $x_1$ are adjacent.
\end{proof}

\begin{theorem}\label{thm:excess-d}
    For $d\ge7$, every polytope with excess degree exactly $d$ occurs as the graph-connected sum of two simple polytopes along a simplex facet. Therefore the number of vertices of such a polytope is either $d+2$, $2d+1$, or $\ge3d$. 
    \end{theorem}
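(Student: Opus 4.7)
The plan is to use \cref{lem:excess-d-part-2} to extract the $(d-1)$-simplex $T=S\cup\{x_0,x_1\}$ of $d$ pairwise adjacent nonsimple vertices, and then to realise $P$ as the cut by the hyperplane $H:=\aff(T)$, producing two simple $d$-polytopes $P^+,P^-$ glued along $\conv(T)$. As a first step I would verify $F_0\cap H=R_0$ and $F_1\cap H=R_1$: the affine hull of $F_0$ cannot equal $H$, since otherwise $F_0\supseteq T$ would contain $x_1$, contradicting $F_0\cap F_1=S$; thus $F_0\cap H$ is a proper face of $F_0$ containing the $(d-2)$-face $R_0$, and hence equals $R_0$. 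In particular $F_0\setminus R_0$ and $F_1\setminus R_1$ each lie strictly on one side of $H$. I would also note that $T$ itself is not a face of $P$: as a $(d-1)$-simplex it would be a facet containing every nonsimple vertex, contradicting \cref{lem:excess-d-part-2}(i).

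Next I would extend the analysis of \cref{structure1} and \cref{structure2} to show that every $(d-2)$-face $T\setminus\{v\}$ of $T$ is a simplex ridge of $P$. For $v\in\{x_0,x_1\}$ this is just $R_1$ or $R_0$. For $v\in S$, \cref{vertex-simple-in-facet}(ii) applied to $v$ (simple in $F_0$) produces a facet $F_0^{(v)}$ with $F_0\cap F_0^{(v)}=R_0\setminus\{v\}$; \cref{structure2}(i) forces $x_1\in F_0^{(v)}$, and \cref{structure2}(ii) exhibits $\conv((R_0\setminus\{v\})\cup\{x_1\})=T\setminus\{v\}$ as a simplex ridge. Using these ridges I would show that $H$ cannot support $P$: if it did, $F_\ast:=P\cap H$ would be a facet of $P$ strictly larger than $T$, and picking a simple vertex $u\in F_\ast\setminus T$ together with $v_0\in T$ for which $u$ and $v_0$ lie on opposite sides of $\aff(T\setminus\{v_0\})$ inside $H$ would put $T\setminus\{v_0\}$ in the relative interior of $F_\ast$, contradicting that $T\setminus\{v_0\}$, being a face of $P$ inside $F_\ast$ of codimension one, must be a facet of $F_\ast$. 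Hence $H$ cuts $P$, and the pairing of the two facets along each ridge $T\setminus\{v\}$ will place $F_0$ and $F_1$ on opposite sides of $H$.

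The hard part will be to upgrade this to the equality $P\cap H=\conv(T)$, i.e. to rule out both simple vertices of $P$ on $H$ and edges of $P$ that cross $H$ at interior points. My plan for this is to iterate \cref{vertex-simple-in-facet}(ii) and \cref{structure2} at each ridge $T\setminus\{v\}$: the two facets of $P$ containing that ridge must lie on opposite sides of $H$, which pins down the location of every simple neighbour of every $T$-vertex and, via $d$-connectivity and the fact that there are exactly $2d$ edges leaving $T$, forces every simple vertex of $P$ into one of the two open half-spaces and prevents any edge from crossing $H$. Granted this, I would set $P^\pm:=P\cap H^\pm_{\mathrm{closed}}$; each is a $d$-polytope with $\conv(T)$ as a common facet, each $T$-vertex has degree $(d-1)+1=d$ in each $P^\pm$ so both are simple, and since the $d$ vertices of $T$ are pairwise adjacent in $P$, $P$ is the required graph-connected sum along the simplex $\conv(T)$. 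For the vertex count, $f_0(P)=f_0(P^+)+f_0(P^-)-d$; by \cref{f-zero-simple} together with \cref{lem:simple} the only simple $d$-polytopes with a simplex facet have $d+1$, $2d$, $3d-1$ or at least $4d-8$ vertices (the $3d-3$ polytopes $\Delta(2,d-2),\Delta(3,3),\Delta(3,4)$ have no simplex facet), and a short case check over all pairs $(f_0(P^+),f_0(P^-))$ yields $f_0(P)\in\{d+2,2d+1\}\cup[3d,\infty)$.
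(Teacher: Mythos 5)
Your overall strategy is the paper's: extract the $(d-1)$-simplex $T$ spanned by the $d$ nonsimple vertices via \cref{lem:excess-d-part-2}, show that its boundary lies in $\partial P$, and cut along $H=\aff(T)$. Two steps, however, are not actually established. First, your argument that every facet $T\setminus\{v\}$ of $T$ is a ridge of $P$ misreads \cref{vertex-simple-in-facet}(ii): that lemma produces a facet $F'$ \emph{containing} $v$ with $F_0\cap F'$ a subridge, so that subridge is $R_0\setminus\{x_j\}$ for some uncontrolled $x_j\neq v$, not $R_0\setminus\{v\}$. Applying \cref{structure2}(ii) then exhibits $T\setminus\{x_j\}$ as a ridge for \emph{some} $j$, whereas you need it for \emph{every} $j$. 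The paper closes this by a separate counting argument: it enumerates the possible facets containing the subridge $\conv(S_i\setminus\{x_1\})$, using the projective parallelism of the edges $e_j,e_j'$ supplied by \cref{lem:simplex-facet}, and derives a contradiction via \cref{structure1} from the assumption that none of them contains $x_1$.

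Second, and more seriously, the step you yourself flag as ``the hard part'' --- proving $P\cap H=\conv(T)$ --- is only a plan, and the proposed mechanism does not suffice. Analysing the two facets at each ridge $T\setminus\{v\}$ only pins down the neighbours of the vertices of $T$; it says nothing about a simple vertex of $P$ lying on $H$ far from $\conv(T)$, nor about where an edge joining two simple vertices on opposite sides of $H$ meets $H$. Graph connectivity cannot supply this geometric information. The paper's proof needs a genuinely topological input here: the union $K=\bigcup_i S_i$ of the ridges is a $(d-2)$-sphere embedded in $\partial P$, which is a $(d-1)$-sphere, and the Jordan--Brouwer separation theorem shows $\partial P\setminus K$ has exactly two components with common boundary $K$; a vertex of $P$ on $H$ must have neighbours on both sides of $H$, and the resulting path in $\partial P$ forces that vertex into $K$. (An equivalent shortcut: $K$ is a $(d-2)$-sphere inside the relative boundary of the $(d-1)$-polytope $P\cap H$, itself a $(d-2)$-sphere, so $K$ must be all of it.) Without some such separation argument your construction of $P^{\pm}$ is not justified. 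The remainder --- simplicity of $P^{\pm}$, pairwise adjacency giving the graph-connected sum, and the vertex count via \cref{lem:simple} and \cref{f-zero-simple} --- is fine and agrees with the paper.
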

\begin{proof}
    We know that if $P$ is such a polytope, then there are $d$ vertices $x_1,\cdots,x_d$ with excess degree one, and all other vertices are simple. Denote by $S_i$ the convex hull of $\{x_1,\cdots,x_d\}\setminus \{x_i\}$. We will show that each simplex $S_i$ is contained in the boundary $\partial P$ of $P$, although it is not necessarily a face of $P$.

    Let $\{x_3, \dots, x_{d}\}$ be the vertices of the subridge (of the polytope) which is the intersection of two facets $F_1$ and $F_2$, and let $x_1\in F_1$ and $x_2 \in F_2$ be the other two nonsimple vertices. Notice that for each of the nonsimple vertices, there are two additional edges incident to it, besides the edges incident to the other $x_i$. Label those edges in $F_1$ accordingly as $e_1, e_3, \dots, e_d$, label the edges in $F_2$ accordingly as $e_2', \dots, e_d'$. Then label the remaining edge incident to $x_1$ as $e_1'$, and label the remaining edge incident to $x_2$ as $e_2$. See Figure \ref{parallel}.
        \begin{figure}[H]
	\begin{center}
		\includegraphics[width=0.9\textwidth]{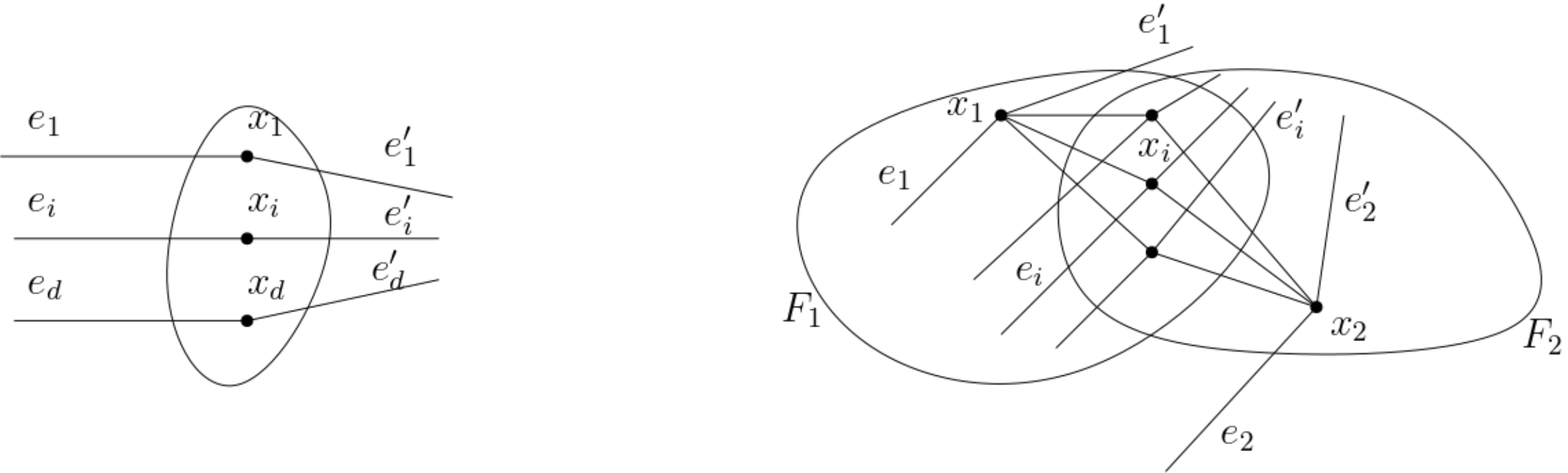}
        \caption{Projectively parallel edges.}
        \label{parallel}
	\end{center}
        \end{figure}
    As $F_1$ is a simple facet and $S_2$ is a $(d-2)$-face therein, by \cref{lem:simplex-facet}, $e_1, e_3,\dots, e_d$ are all projectively parallel, and similarly, $e_2', e_3',\dots, e_d'$ are all projectively parallel. By an argument similar to that in the previous lemma, there is a facet $F'$ containing $x_1$ that intersects with $F_1$ in a $(d-3)$-face and intersects with $F_2$ in a $(d-2)$-face. Since $F'$ has to be simple, and again we have a simplex ridge $S_i$ of $P$ for some $i$, it follows that $e_1'$ is projectively parallel with $e_j', j\ne i$, so $e_1', e_2',\dots, e_d'$ are all projectively parallel. Similarly,  we can show that $e_1, e_2,\dots, e_d$ are all projectively parallel.
	\begin{figure}[H]
        \begin{center}
		\includegraphics[width=0.3\textwidth]{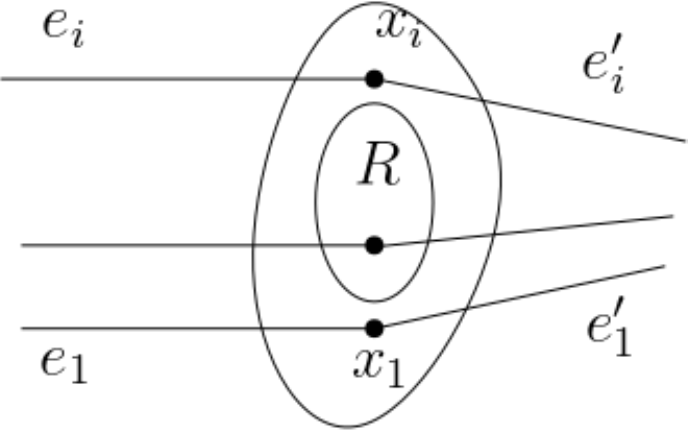}
        \caption{Neighbours of vertices in $R= \conv (S_i \setminus \{x_1\})$.}
	\label{subridgeR}
        \end{center}
        \end{figure}
	We next show that each $S_i$ is contained in a facet, and hence in $\partial P$. It is clear that $S_1\subset F_2$ and $S_2\subset F_1$. Now consider the case $i\ne 1, 2$. Notice that $R$, the convex hull of $ S_i \setminus \{x_1\}$, is a subridge of the polytope. We claim that there is a facet containing both $R$ and $x_1$, i.e. there is a facet containing $S_i$. Then we will have $S_i\subset\partial P$ as required.

 So we consider all the possible facets $F$ containing the subridge $R$, and assume that none of them contain $x_1$. Note that for each $x_j\in R$, the only neighbours of $x_j$ outside $R$ are $x_1$, $x_i$ and (the other endpoints of) $e_j$ and $e_j'$; see Figure \ref{subridgeR}.  
 Any facet containing the subridge $R$ must contain at least two of $x_1$, $x_i$, $e_j$ and $e_j'$. Now there is a unique facet containing $R$, $e_j$ and $x_i$, and a unique facet containing $R$, $e_j'$ and $x_i$. 
 Since there at least three facets containing $R$, there must be one such facet which contains both $e_j$ and $_j'$. 
 
 For any $x_j\in R$ (i.e. $j\ne1,2,i$), the neighbours of $x_j$ in $F\cap F_1$ are the other $d-4$ nonsimple vertices in $R\cap F_1$ and $e_j$, so  $x_j$ has degree only $d-3$ in $F\cap F_1$, which must therefore be a subridge of the polytope. However this intersection also has a simple vertex, namely the other endpoint of $e_j$, contradicting \cref{structure1}. 
 
     So each $S_i\subset\partial P$ after all. Clearly the union $K=\bigcup_{i=1}^dS_i$ is homeomorphic to a $(d-2)$-dimensional sphere. Moreover $K$ is contained in the boundary of $P$, which in turn is homeomorphic to a $(d-1)$-dimensional sphere. The Jordan-Brouwer theorem (\cite[Theorem 6.10.5]{Deo18} or \cite{PerMarKup}) then tells us that $\partial P\setminus K$ has exactly two components, and that $K$ is their common boundary. In particular, any path between these two components must pass through $K$.

    Let $H$ be the unique hyperplane containing $K$, let $H_1$ and $H_2$ be the two open half-spaces containing $H$ and let $S$ be the convex hull of $K$. Then  $S$ is a $(d-1)$-simplex and its facets are the $S_i$. We need to know that there are no vertices of $P$ in $H$, other than those in $K$. Let $v$ be any vertex of $P$ in $H$. Clearly $v$ must have at least one neighbour in $H_1$ and at least one neighbour in $H_2$. These two edges define a path in $\partial P$ from one component to the other. This path must pass through $K$, but $v$ is the only point on either edge which lies in $K$. Thus $v\in K$. 

    If we define $F_i$ as the convex hull of $K$ and the vertices of $P$ which lie in $H_i$, then $P$ is easily seen to be the graph-connected sum of $F_1$ and $F_2$.

The graph-connected sum of two simplices clearly has $d+2$ vertices, as discussed earlier. The graph-connected sum of a simplex and a prism has $2d+1$ vertices and is a capped prism. Otherwise both the simple parts have at least $2d$ vertices, giving $P$ at least $3d$ vertices.
\end{proof}

The situation is more complex in lower dimensions. If $d=6$, there are many natural examples with excess degree 6 which are not graph-connected sums. In each such example, the nonsimple vertices all have the same degree, and they form a face of the polytope (either a single vertex, an edge, a triangle, or a 3-prism). This will be discussed in more detail in the next section.

For $d=5$, the wedge $W(J_4,F)$, where $F$ is any pentagonal 2-face of $J_4$, furnishes an example of a polytope with excess degree 5 which is not a graph-connected sum. The next result shows that all such examples have the same form.

The following lemma was proved without statement by Kusunoki and Murai \cite[p. 97]{KusMur19}. We remark that \cref{thm:excess}(i) can be used to shorten its proof.

\begin{lemma}\label{fiveNSV}
    Let $P$ be a 5-polytope with excess degree exactly 5. Then  $P$ must have five nonsimple vertices, each with excess degree 1.
\end{lemma}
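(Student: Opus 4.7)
The total excess $\xi(P)=5$ decomposes as a multiset of positive integers summing to $5$; besides the desired $(1,1,1,1,1)$ there are six other partitions, namely $(5)$, $(4,1)$, $(3,2)$, $(3,1,1)$, $(2,2,1)$, $(2,1,1,1)$, which must be eliminated.

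The partition $(4,1)$ is ruled out at once by \cref{thm:excess}(i): the excess-$1$ vertex needs at least $d-3=2$ nonsimple neighbours, but only one other nonsimple vertex is available. For the remaining partitions there is a vertex $v$ of excess $k\ge 2$, and the main tool is the vertex figure $P/v$, a $4$-polytope with $5+k$ vertices. A simple neighbour of $v$ in $P$ is automatically simple in $P/v$; and for any nonsimple neighbour $w$ of $v$ with excess $k_w$ in $P$, the identity $\deg_{P/v}(w)=\deg_{P/w}(v)\le f_0(P/w)-1=4+k_w$ shows that $w$ has excess at most $k_w$ in $P/v$. Summing over nonsimple neighbours of $v$ yields $\xi(P/v)\le 5-k$.

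The key quantitative input is a sharpening of \cref{f-zero-simple}: applying the $g$-theorem in dimension $4$ with $g$-vector $(1,g_1,g_2)$ subject to the Macaulay bound $g_2\le\binom{g_1+1}{2}$ gives $f_0=5+3g_1+g_2\in\{5,8,9\}\cup\{n:n\ge 11\}$, so simple $4$-polytopes with $6$, $7$, or $10$ vertices do not exist. In the partition $(5)$, the unique nonsimple vertex $v$ has only simple neighbours, so $P/v$ would be a simple $4$-polytope with $10$ vertices --- impossible. For partitions with $k=2$, $P/v$ has $7$ vertices and hence cannot be simple; the Excess theorem applied to $P/v$ gives $\xi(P/v)\in\{2,3\}$, and when $\xi(P/v)=2$ the equality case of \cref{thm:triplexes} forces $P/v=M(3,1)$, whose unique nonsimple vertex has excess exactly $2$. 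For $k=3$, $\xi(P/v)\le 2$ and $P/v$ is either the simplicial prism $\Delta(1,3)$ (if simple) or $M(3,1)$ (if nonsimple).

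For each remaining partition the required adjacencies among the nonsimple vertices (forced by \cref{thm:excess}(i) --- for instance in $(3,1,1)$ all three form a triangle, in $(2,2,1)$ the excess-$1$ vertex is adjacent to both excess-$2$ vertices) combined with these explicit vertex-figure structures must be realised consistently at every nonsimple $v$, and the identities $\deg_{P/v}(w)=\deg_{P/w}(v)$ between pairs of nonsimple vertices produce a contradiction in every case. For example in $(3,2)$ both $P/v$ and $P/w$ must be $M(3,1)$, with $v$ and $w$ playing the role of the apex in the other's vertex figure, and tallying the edges and $2$-faces incident to $\{v,w\}$ reveals incompatible excess contributions. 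The main obstacle is the partition $(2,1,1,1)$, which requires coordinating four vertex figures simultaneously; the contradiction there ultimately takes the form either of a required simple $4$-polytope of forbidden vertex count ($6$, $7$, or $10$), or of a low-excess nonsimple $4$-polytope whose distribution of excess degrees violates the uniqueness clause of \cref{thm:triplexes}.
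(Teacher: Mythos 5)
First, a point of reference: the paper does not prove this lemma itself --- it attributes the result to Kusunoki and Murai \cite{KusMur19} and merely remarks that \cref{thm:excess}(i) can shorten their argument --- so there is no in-paper proof to compare against line by line. Your overall strategy (partition the excess $5$ over the nonsimple vertices, analyse vertex figures, forbid simple $4$-polytopes with $6$, $7$ or $10$ vertices, and invoke the equality case of \cref{thm:triplexes}) is sensible, and the cases $(4,1)$ and $(5)$ are correctly dispatched (note that \cref{lem:simple}(ii) already excludes simple $4$-polytopes with $6$, $7$ or $10$ vertices, so your $g$-theorem digression is redundant). However, the proof is incomplete, and one of the few concrete claims you make about the hard cases is wrong: for a vertex $v$ of excess $3$, the vertex figure $P/v$ has $8$ vertices and therefore \emph{cannot} be $M(3,1)$, which has only $7$. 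In particular your assertion that in the partition $(3,2)$ ``both $P/v$ and $P/w$ must be $M(3,1)$'' fails for $P/v$: all you actually know is that $P/v$ is a $4$-polytope with $8$ vertices and excess $2$ whose unique nonsimple vertex is $w$, and \cref{thm:triplexes} gives no information when $f_0=2d$. The promised ``tallying'' that is supposed to yield a contradiction is never exhibited and does not follow from anything you have established.

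More broadly, the elimination of $(3,2)$, $(3,1,1)$, $(2,2,1)$ and $(2,1,1,1)$ --- the entire substance of the lemma once the easy cases are gone --- is asserted rather than carried out. Some of these do fall to your own machinery with a little more care. For $(2,1,1,1)$: the excess-$2$ vertex $v$ has $P/v=M(3,1)$, whose unique nonsimple vertex has excess $2$ in $P/v$, yet every nonsimple vertex of $P/v$ corresponds to a nonsimple neighbour $w$ of $v$ with $\deg_{P/v}(w)\le\deg_P(w)-1=5$, hence excess at most $1$ --- contradiction. For $(2,2,1)$: the excess-$1$ vertex $u$ is adjacent to both excess-$2$ vertices by \cref{thm:excess}(i), each $P/v$ and $P/w$ is $M(3,1)$ with the \emph{other} excess-$2$ vertex as its unique nonsimple vertex, so $u$ is simple in both; then $v$ and $w$ are simple in $P/u$, which is a nonsimple $4$-polytope with $6$ vertices and hence must have excess at least $2$ concentrated on vertices corresponding to nonsimple neighbours of $u$ --- contradiction. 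But $(3,2)$ and $(3,1,1)$ remain genuinely open as written, and the one argument you sketch for $(3,2)$ rests on the erroneous identification of $P/v$. You need either to complete these case analyses explicitly or to fall back on the cited proof of Kusunoki and Murai.
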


\begin{theorem}\label{excess-five}
    Let $P$ be a 5-polytope with excess degree exactly 5. Then either
    \begin{enumerate}[(i)]
        \item $P$ is the graph-connected sum of two simple polytopes along a simplex facet, or
        \item There exist two simple facets $F_0,F_1$ whose intersection is a pentagonal 2-face.
    \end{enumerate}
    Therefore the number of vertices of such a polytope is 7, 11 or any odd number from 15 onwards.
\end{theorem}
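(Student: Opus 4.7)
The plan is to adapt the strategy of Theorem~\ref{thm:excess-d} (which handles $\xi=d$ for $d\ge 7$) to the exceptional case $d=5$, keeping in mind that the range $[d-1,2d-7]$ used by the structural lemmas of Section~\ref{structure} is empty here, so the arguments of \cref{structure1,structure2,vertex-simple-in-facet} must be carried out afresh. By Lemma~\ref{fiveNSV}, $P$ has exactly five nonsimple vertices, each of excess degree one. Applying Lemma~\ref{basic}(i), any two facets intersecting in a $j$-face with $j\in\{0,1\}$ would yield a vertex of excess $\ge d-2-j\ge 2$, contradicting Lemma~\ref{fiveNSV}. So every nonempty pairwise intersection of facets is a ridge or a subridge ($2$-face). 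If no subridge intersection occurred, $P$ would be semisimple, and Theorem~\ref{thm:semisimplexcess}(i) would force every nonsimple vertex to have excess at least $4$, a contradiction. Hence there exist facets $F_0,F_1$ with $S=F_0\cap F_1$ a $2$-face.

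By Lemma~\ref{basic}(i) again, each vertex of $S$ has excess at least one and is therefore one of the five nonsimple vertices with excess exactly one; as $S$ is a polygon, $3\le|S|\le 5$. A short degree count shows every vertex $u\in S$ has exactly $2$ neighbours in $S$, $2$ in $F_0\setminus S$, $2$ in $F_1\setminus S$, and none outside $F_0\cup F_1$. Combining this with Theorem~\ref{thm:excess}(i), all five nonsimple vertices must lie in $F_0\cup F_1$: a nonsimple vertex $w$ outside would need at least two nonsimple neighbours, but vertices of $S$ have no edges to the complement of $F_0\cup F_1$, leaving too few candidates.

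The proof then splits on $|S|$. When $|S|=5$, every vertex of $S$ has degree $4$ in each of $F_0,F_1$, hence is simple in both; as any simple vertex of $P$ lying in a facet is simple in that facet, both $F_0$ and $F_1$ are simple polytopes, giving case~(ii). When $|S|=3$, $S$ is a triangular (hence simplicial) subridge and two further nonsimple vertices $x_0,x_1$ remain; following the argument of Lemma~\ref{lem:excess-d-part-2} (which still goes through when $|S|=3$, since the projective-parallelism argument of Lemma~\ref{lem:simplex-facet} needs only $S$ to be a simplex), one shows $x_0,x_1$ lie in distinct facets and adapts the Jordan--Brouwer argument of Theorem~\ref{thm:excess-d} essentially verbatim, producing a phantom $4$-simplex face and exhibiting $P$ as a graph-connected sum of two simple $5$-polytopes along that simplex---this is case~(i). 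The vertex count ($f_0\in\{7,11\}$ or any odd number from $15$ onwards) then follows by applying Lemma~\ref{lem:simple} to the two simple summands in case~(i) and combining with the vertex counts available in case~(ii).

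The main obstacle is ruling out $|S|=4$. Here $S$ is a quadrilateral containing four of the nonsimple vertices, and the fifth vertex $v$ lies (after swapping $F_0,F_1$ if necessary) in $F_0\setminus S$; since four of the five nonsimple vertices are coplanar in $S$, they cannot be affinely independent as five points, so can form neither a $4$-simplex (case~(i)) nor a pentagonal $2$-face (case~(ii)). To eliminate this configuration the plan is to analyse the vertex figure $P/u$ for $u\in S$: it is a $4$-polytope on the six neighbours of $u$, whose facets $F_0/u,F_1/u$ are tetrahedra sharing the edge corresponding to $S$, which pins $P/u$ down essentially as the free join of an edge and a square. Counting facets of $P$ through $v$ and invoking the dimension constraint on facet intersections then produces either a second $2$-face subridge through $v$ lying in $F_0$, which cannot be populated by enough nonsimple vertices (since only four remain besides $v$), or a projective-parallelism constraint from Lemma~\ref{lem:simplex-facet} incompatible with $S$ being a quadrilateral. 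This yields the required contradiction and completes the classification.
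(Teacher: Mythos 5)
Your overall architecture matches the paper's: Lemma~\ref{fiveNSV} gives five nonsimple vertices of excess degree one, Theorem~\ref{thm:semisimplexcess} rules out semisimplicity so some pair of facets meets in a $2$-face $S$, Lemma~\ref{basic} forces every vertex of $S$ to be nonsimple so $3\le|S|\le5$, the pentagon gives case~(ii), and the triangle is handled by rerunning the argument of Theorem~\ref{thm:excess-d}. The genuine gap is the quadrilateral case. You set out to prove that $|S|=4$ is outright impossible, but what you offer is only a plan (``the plan is to analyse the vertex figure \dots which pins $P/u$ down essentially as \dots produces either \dots or \dots''): none of the steps is carried out, and the one concrete claim in the sketch is false. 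A second $2$-face subridge $S'$ through the fifth nonsimple vertex $v$ \emph{can} be populated by enough nonsimple vertices, namely $v$ together with the two endpoints of an edge of $S$ --- three vertices suffice for a triangle --- so that branch of your dichotomy does not yield a contradiction, and the other branch is not developed at all.

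The paper turns exactly this observation to its advantage instead of seeking a contradiction: the argument of Lemma~\ref{vertex-simple-in-facet}(i), applied to a nonsimple vertex of $P$ that is simple in $F_0$, produces another facet $F$ with $F\cap F_0$ a subridge $S'$; since $S'\cap S$ is a common face of two distinct $2$-faces it is at most an edge, so $S'$ contains at most $2+1=3$ of the five nonsimple vertices, hence is a triangle, and one reruns the triangle case on $S'$ to land in case~(i). (That the quadrilateral configuration is then vacuous follows a posteriori, since case~(i) makes the five nonsimple vertices affinely independent while four of them would be coplanar; but the theorem only needs the disjunction, not the impossibility.) Your treatment of the other two cases, the derivation that all nonsimple vertices lie in $F_0\cup F_1$, and your caveat that the Section~\ref{structure} lemmas must be re-derived for $d=5$ because the range $[d-1,2d-7]$ is empty, are all consistent with the paper; only the $|S|=4$ case needs to be replaced by the reduction above.
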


\begin{proof}
    Thanks to \cref{fiveNSV}, $P$ must has five nonsimple vertices, each with excess degree 1. By \cref{thm:semisimplexcess}, $P$ cannot be semisimple, so there are two facets $F_0$ and $F_1$ whose intersection $S$ is a subridge of the polytope. (It cannot be an edge or a single vertex, as then the excess degree of its members would be 2 or more.)

    If the subridge is a triangle (simplex), the argument proceeds as in the preceding theorem, and $P$ is a graph-connected sum as in case (i); the value of $f_0$ will be 7, 11 or $\ge15$. 
    
    If the subridge is a pentagon,
    then we are in case (ii) and both facets are simple. If a simple 4-polytope has a pentagon face, it must have at least 11 vertices. It follows that $f_0\ge17$.

    We rule out the case that $S$ is a quadrilateral. The argument used in \cref{vertex-simple-in-facet}(i) gives us another facet $F$ whose intersection with $F_0$ is not a ridge of the polytope, and so must be a subridge $S'$ of $P$. The intersection $S\cap S'$ can only be an edge, so $S'$ contains just three nonsimple vertices, returning us to case (i).
\end{proof}

This gives us a simpler proof of the nonexistence of a 5-polytope with 13 vertices and 35 edges \cite{KusMur19,PinUgoYosEXC}.
    
For $d=4$ and excess degree 4, there is very little structure. There are examples with
\begin{enumerate}[(i)]
    \item four nonsimple vertices which form a simplex facet,
    \item four nonsimple vertices which are mutually adjacent but do not form a face,
    \item one vertex with excess degree 3 and one vertex with excess degree 1 (necessarily adjacent),
    \item three nonsimple vertices which form a triangular face,
    \item three nonsimple vertices with only one edge between them,
    \item two adjacent vertices with excess degree 2,
    \item two nonadjacent vertices with excess degree 2,
    \item one vertex with excess degree 4.   
\end{enumerate}

Likewise for $d=3$ and excess degree 3, there is very little structure. It is easy to find examples in which  the nonsimple vertices have the same or different degrees and do or do not form a face.


\section{Polytopes with excess $2d-6$}

If $d=3$, 4 or 5, then $2d-6=0, d-2$ or $d-1$ respectively, and these cases are well studied.

For $d\ge6$, there are four basic examples of excess degree $2d-6$.

\begin{enumerate}[(i)]
    \item $M(3,d-3)$, which has $d+3$ vertices, of which $d-3$ each have excess degree 2, forming a simplex $(d-4)$-face,
    \item $M(d-2,2)$, which has $2d-2$ vertices, 2 of which have excess degree $d-3$, 
    \item a prism whose base is $M(2,d-3)$, which has $2d+2$ vertices, of which $2d-6$ each have excess degree 1, forming a prism subridge,
    \item a pyramid over $\Delta(2,d-3)$, which has $3d-5$ vertices, one of which has excess degree $2d-6$.
\end{enumerate}

In each of these cases, the nonsimple vertices all have the same degree, and they form a face. We will see that all examples in sufficiently high dimension have one of these forms. More precisely, we give a characterisation of polytopes with excess degree exactly $2d-6$, in terms of their face figure. 

Let $F$ be a $k$-face of a $d$-polytope $P$ and let $\psi$ be the anti-isomorphism from $P$ onto its dual. The {\it face figure} $P/F$ is a $(d-k-1)$-polytope that is dual to the restriction of $\psi$ to $(\mathcal{F}(\psi(F)), \subset)$. When $F$ is a vertex, it is just the vertex figure.
 
\begin{theorem}\label{excess2d-6}
     Let $P$ be a polytope with excess degree exactly $2d-6$. Then when $d\ge 9$,
\begin{enumerate}[(i)] 
     \item If there is only one nonsimple vertex $v$, then $v$ has excess degree $2d-6$, and $P/v$ is $\Delta(2,d-3)$.
     \item  There do not exist two facets $F_1$, $F_2$ such that $F_1 \cap F_2$ is a single vertex.
     \item  If the intersection of two facets $F_1 \cap F_2$ is a line segment $[v_1, v_2]$, then $v_1, v_2$ have excess degree exactly $d-3$, the other vertices are all simple, and the face figure $P/[v_1, v_2]$ is a $(d-2)$-prism. 
     \item  If the intersection of two facets $F_1 \cap F_2$ is a $(d-4)$-face $K$, then $K$ is a $(d-4)$-simplex, and each vertex in $K$ has excess degree exactly 2. The face figure $P/K$ is a 3-prism.
     \item  If the intersection of two facets $F_1 \cap F_2$ is a $(d-3)$-face $S$, then $S$ is a $(d-3)$-prism, and each vertex in $S$ has excess degree exactly 1. The face figure $P/S$ is a quadrilateral. 
\end{enumerate}    

     \begin{proof}
     (i) If $v$ is the only nonsimple vertex, then $v$ has excess degree $2d-6$, and the vertex figure $P/v$ is a simple $(d-1)$-polytope with $3d-6=3(d-1)-3$ vertices. By \cref{lem:simple}, $P/v$ is $\Delta(2,d-3)$. 
     
     (ii) \cref{notvertex} includes this case.
     
     (iii) The excess degree of $v_1$ (and $v_2$, respectively) is at least $d-3$. Hence $\excess(v_1)=\excess(v_2)=d-3$, and all the other vertices are simple. Consider the face figure $P/[v_1, v_2]$, which is a simple $(d-2)$-polytope. Since $v_1, v_2$ are both simple in $F_1$ and $F_2$, $[v_1, v_2]$ is contained in $d-2$  2-faces of $F_1$ and $F_2$ respectively. Since there are no other edges of $v_1, v_2$ outside $F_1 \cup F_2$, $[v_1, v_2]$ is contained in exactly $2(d-2)$  2-faces of $P$. Hence, $P/[v_1, v_2]$ is a simple $(d-2)$-polytope with $2(d-2)$ vertices, so, by \cref{lem:simple}, $P/[v_1, v_2]$ is a prism.

     (iv) The excess degree of each vertex in $K$ is at least 2, so $K$ must be a $(d-4)$-simplex. This means that every nonsimple vertex is in $K$ and with excess degree exactly 2. Then $F_1, F_2$ are both simple, and $K$ is contained in three $(d-3)$-faces of $F_1$, and three $(d-3)$-faces of $F_2$. So there are exactly six $(d-3)$-faces of $P$ containing $K$. Since all the vertices outside $K$ are simple, $P/K$ is a simple 3-polytope with six vertices, i.e. a 3-prism.

     (v) If $F_1 \cap F_2 = S$ is a $(d-3)$-face, then $S$ could be a simplex or a prism, and $F_1, F_2$ are simple. 
     
     Case 1: $S$ is a simplex. By \cref{thm:excess}(i), some nonsimple vertices have to be adjacent to vertices in $S$, so we assume without loss of generality that $u_1\in S$ has excess degree $>1$. Consider the facets $F$ containing this extra edge of $u_1$. We will use a similar argument as in \cref{structure1}. 
     Suppose first that $F\cap F_1$ is a subridge of the polytope. By routine discussion, one can see that, $F\cap F_1$ cannot be $S$, cannot contain $d-3$ vertices in $S$, nor contain $d-4$ vertices in $S$. Hence $F\cap F_1$ is a ridge of the polytope, and similarly $F\cap F_2$ is also a ridge of the polytope. If $S\subset F$, the excess degree of $F$ is at most $2d-6-(d-2)=d-4$, which is not possible. So, there is exactly one vertex of $S$ missing in $F$, and there are only $d-3$ such possible facets, a contradiction. So, all the edges incident to vertices of $S$ are in $F_1 \cup F_2$, and they all have excess degree exactly 1. Moreover, all the nonsimple vertices are in $F_1\cup F_2$. 
     

     Let $w$ be a nonsimple vertex in $F_1$, then $w$ is a nonsimple vertex in a simple facet, so $w$ is contained in a facet $F$ that intersects with $F_1$ not at a ridge of the polytope. If $F \cap F_1$ is $\{w\}$, or a line segment containing $w$, or a $(d-4)$-face containing $w$, then the excess degree of the polytope is beyond $2(d-3)+1=2d-5$, a contradiction. So $F\cap F_1$ is a subridge of the polytope. Hence, each nonsimple vertex in $F_1, F_2$ has excess degree exactly 1. Notice that $w$ is adjacent to at least two vertices of $S$. This implies that if $F_1$ contains at least two nonsimple vertices, then $F_1$ is a simplex, and so is $F_2$. It follows that $P$ has excess degree $d+2$. So this case does not arise. 
     
     Case 2: $S$ is a $(d-3)$-prism. Every vertex in $S$ is nonsimple and has excess degree exactly 1. The number of $(d-2)$-faces of $P$ containing $S$ is 4, so the face figure $P/S$ is a quadrilateral. 
    		
   \end{proof}
\end{theorem}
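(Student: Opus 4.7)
\emph{Approach.} The theorem splits into cases indexed by the possible nonempty intersections of two facets of $P$. Since $\xi(P)=2d-6$, \cref{basic}(ii) restricts the dimension $j$ of any such intersection (with $j\le d-3$) to satisfy $(d-2-j)(j+1)\le 2d-6$; for $d\ge 9$ the only admissible values are $j\in\{0,1,d-4,d-3\}$, plus the standard ridge case $j=d-2$ which, if exclusive, corresponds to the semisimple situation of (i). My plan is to dispose of (i) and (ii) quickly, then carry out parallel analyses for the three remaining intersection dimensions.

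For (i), the unique nonsimple vertex $v$ absorbs all of the excess, so $\deg v = 3d-6$; since every other vertex of $P$ is simple, $P/v$ is a simple $(d-1)$-polytope with $3(d-1)-3$ vertices, and sifting through the candidates in \cref{lem:simple} for $d-1\ge 8$ leaves only $\Delta(2,d-3)$. Part (ii) is immediate from \cref{notvertex}, since $2d-6\in[d-1,2d-6]$ whenever $d\ge 5$.

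For (iii), \cref{basic}(i) gives $\xi(v_i)\ge d-3$, and the total budget $\xi(P)=2d-6$ pins each excess to $d-3$ and forces every other vertex of $P$ to be simple. A degree count $\deg_P v_i = 2d-3 = (d-1)+(d-1)-1$ then shows that $v_i$ is simple in both $F_1$ and $F_2$ with no external neighbours; in particular the facets $F_1, F_2$ are simple. The face figure $Q := P/[v_1,v_2]$ is a $(d-2)$-polytope whose vertices correspond to the 2-faces of $P$ through $[v_1,v_2]$; since $v_1,v_2$ have no outside neighbours, every such 2-face lies in $F_1\cup F_2$, and simplicity of $F_i$ (which makes $F_i/[v_1,v_2]$ a $(d-3)$-simplex) yields $f_0(Q)=2(d-2)$. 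A similar local analysis at each 2-face $T$ (its third vertex is simple in $P$, hence simple in $F_i$, so $T$ lies in exactly $d-3$ 3-faces of $F_i$ and in no 3-face outside $F_1\cup F_2$) shows $Q$ is simple, whereupon \cref{f-zero-simple} in dimension $d-2\ge 7$ identifies $Q$ with the $(d-2)$-prism.

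Parts (iv) and (v) follow the same template. For (iv), \cref{basic}(i) gives $\xi(v)\ge 2$ for each $v\in K$, so $|K|\le d-3$; dimension forces $|K|=d-3$, making $K$ a simplex with each vertex of excess exactly $2$ and every other vertex of $P$ simple. A routine degree/vertex-figure analysis then forces $F_1$ and $F_2$ to be simple, and the face figure $P/K$ is a simple 3-polytope with six vertices (three subridges of $P$ inside each $F_i$ containing $K$, disjoint because $F_1\cap F_2=K$); the only simple 3-polytope with six vertices is the triangular prism. For (v), \cref{basic}(ii) bounds $\xi(S)\le d-4$, and the Excess theorem together with \cref{lem:simple} forces the $(d-3)$-polytope $S$ to be either a simplex or a prism. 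The main obstacle is ruling out the simplex case: here I would adapt the facet-enumeration argument of \cref{structure1}, taking a nonsimple $u\in S$ with an extra edge $[u,w]$ necessarily going outside $F_1\cup F_2$ and showing that the facets of $P$ containing $[u,w]$ are too few to supply $\deg u$. Once $S$ is a prism, every vertex of $S$ has excess exactly $1$, the $(d-2)$-faces of $P$ containing $S$ are exactly two inside each $F_i$ (by simplicity), and $P/S$ is a 2-polytope with four vertices, i.e.\ a quadrilateral.
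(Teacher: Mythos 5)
Your overall architecture is the same as the paper's: classify the possible dimensions of $F_1\cap F_2$ via \cref{basic}, dispose of $j=0$ with \cref{notvertex}, and use excess-budget counts plus \cref{lem:simple} to identify the face figures in the cases $j=1$, $j=d-4$ and $j=d-3$. Parts (i)--(iv) match the paper's argument and your degree counts there are sound. The genuine gap is in part (v), in the case ``$S$ is a simplex''. Your plan is to take a vertex $u\in S$ with an extra edge $[u,w]$ leaving $F_1\cup F_2$ and run the facet-enumeration of \cref{structure1} to reach a contradiction. That argument only proves that every vertex of the simplex $S$ has excess degree exactly $1$; it does not exclude the simplex case. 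The simplex $S$ then contributes only $d-2$ to the excess, leaving $d-4\ge 5$ further units carried by nonsimple vertices outside $S$, and nothing in your sketch addresses them. This residual configuration cannot be dismissed on soft grounds: it is exactly the situation arising in \cref{thm-d+2}, where $F_1$ and $F_2$ end up being simplices and $\xi(P)=d+2$, and for $d=8$ one has $d+2=2d-6$. The paper therefore needs a second, separate argument here: each nonsimple $w\in F_i\setminus S$ is shown to have excess exactly $1$ and at least two neighbours in $S$, whence (by a Balinski-type argument as in \cref{structure2}(iii)) $F_1$ and $F_2$ are forced to be simplices, giving $\xi(P)=d+2\ne 2d-6$ for $d\ge9$. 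Without this second half, case (v) is not closed.

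A smaller slip in the same part: you deduce that $S$ is simple from ``$\xi(S)\le d-4$ plus the Excess theorem'', but the Excess theorem in dimension $d-3$ only gives $\xi(S)\ge d-5$ when $S$ is nonsimple, which is compatible with $\xi(S)\le d-4$. The correct route is to apply \cref{basic}(ii) to $F_1$ itself: $\xi(F_1)\le d-4<(d-1)-2$, so $F_1$ is simple, and $S$ is simple as a face of a simple polytope; the bound $f_0(S)\le 2(d-3)$ (each vertex of $S$ having excess at least $1$ in $P$) together with \cref{lem:simple} then yields simplex or prism.
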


For $d=4, 5, 6$ or 8, we have $2d-6=d-2,d-1,d$ or $d+2$ respectively, and examples with excess degree $2d-6$ which are not of the above form are easy to find. We are not aware of any such examples when $d=7$.

\section{chaos}\label{2d-5}

For excess degree above $2d-6$, there seems to be little structure. We expect that excess degree $2d-5$ is only possible when $d=3, 5$ or 7. In these cases, $2d-5=d-2, d$ or $d+2$, so examples do exist. 

If there do exist examples with excess degree $2d-5$ in higher dimensions, the nonsimple vertices would be connected. Indeed \cref{thm:excess}(i) implies that each  component of the subgraph of nonsimple vertices would contribute excess degree at least $d-2$. 

We will see shortly that this is not the case for polytopes with excess degree $2d-4$. Examples with excess degree $2d-4$ are common. The simplest examples are a pyramid over $J(d)$ (with one vertex with excess degree $2d-4$), and a $(d-2)$-fold pyramid over a pentagon (with $d-2$ vertices with excess degree 1). Examples in which the nonsimple vertices have different degrees include a $(d-3)$-fold pyramid over a tetragonal antiwedge ($d-3$ vertices with excess degree 2 and two vertices with excess degree 1) and a pyramid over a $(d-1)$-pentasm (one vertex with excess degree $d-1$ and $d-3$ vertices with excess degree 1).

For our final example, start from $M(d-1,1)$, and truncate one of its simple vertices. The resulting polytope has a unique nonsimple vertex with excess degree $d-2$, and is projectively equivalent to a polytope with a line segment for a summand. The facets of this polytope (on the left of Figure \ref{fig8}) are two simplices, one prism, one $J_{d-1}$, one $M(d-2,1)$ and $d-2$ copies of the $(d-1)$-dimensional version. In Figure \ref{fig8}, a circle represents a simplex face with the indicated number of vertices. Now, add a new vertex $x$ beyond one simplex facet but in the same plane as one of the quadrilateral faces of the prism facet; this 2-face becomes a pentagon in the new polytope. 
In this new polytope (on the right of Figure \ref{fig8}), $d-2$ neighbours of $x$ are nonsimple with excess degree 1, and are not adjacent to $v$, whose excess degree is $d-2$. The total excess degree of this polytope is $2d-4$.

\begin{figure}[H]
\begin{tabular}{cc}
\includegraphics[width=0.32\textwidth]{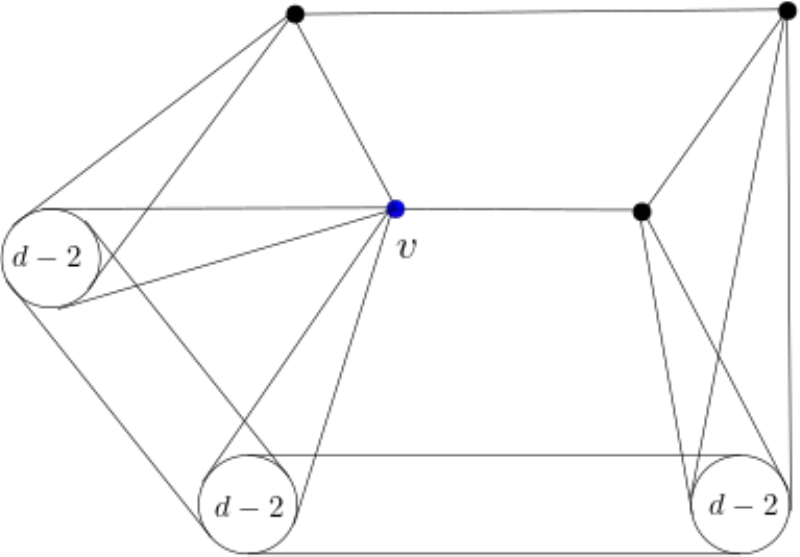}&
\quad \includegraphics[width=0.4\textwidth]{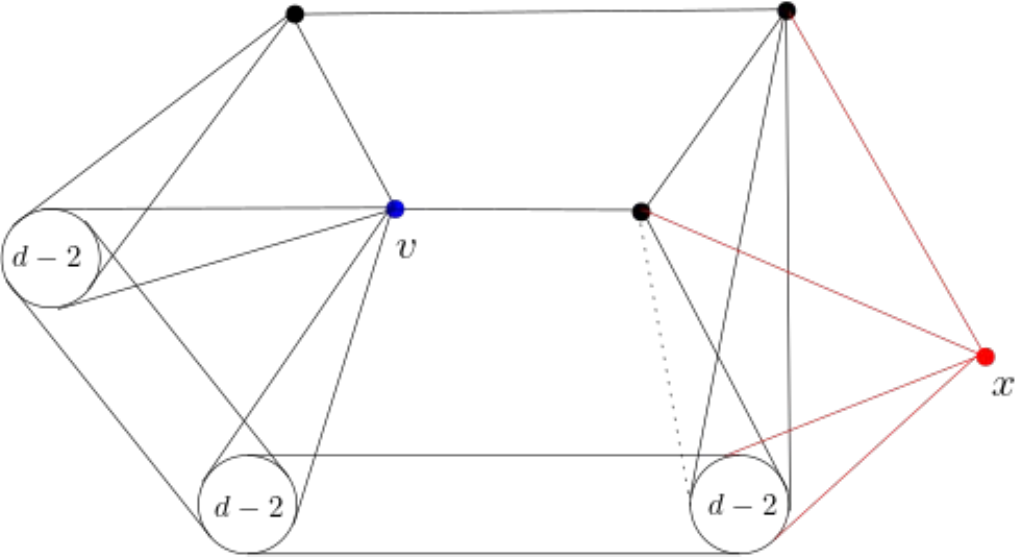} \\
\end{tabular}
\caption{Construction of disconnected subgraph of nonsimple vertices.}
\label{fig8}
\end{figure}


\end{document}